\colorlet{vertexcolor}{black}
\colorlet{edgecolor}{black}
\newlength{\edgewidth}
\tikzstyle{vertex}=[circle, draw, fill=vertexcolor, inner sep=0pt, minimum size=4pt]
\tikzstyle{edge}=[edgecolor,line width=\edgewidth]
\newcommand{\edge}[2] {
 \draw[edge] (#1) -- (#2);
}
\newcommand{\vertex}[4][] {
  \node (#2) at (#3) {};
  \node at (#2) [vertex, label=#4:#1] {};
}
\newcommand{\nvertex}[3][] { \vertex[#1]{#2}{#3}{above}} 
\newcommand{\evertex}[3][] { \vertex[#1]{#2}{#3}{right}} 
\newcommand{\wvertex}[3][] { \vertex[#1]{#2}{#3}{left}}
\setlist{itemsep=3pt,parsep=0pt,topsep=2pt,partopsep=0pt}  
\setlist{leftmargin=2.5\parindent} 
 \def\endofClaim{\hfill\scalebox{.6}{$\Box$}}
\let\subset\subseteq \let\eps\varepsilon \let\rho\varrho
\def\dcup{\cup}
\def\le{\leqslant} \def\ge{\geqslant}
\newtheorem{theorem}{Theorem}[section]
\newtheorem*{SzRLdeg}{Szemer\'edi's Regularity Lemma (sparse minimum degree form)}
\newtheorem{lemma}[theorem] {Lemma}    
\newtheorem{prop}[theorem] {Proposition}
\newtheorem{conj}[theorem]{Conjecture}
\newtheorem{prob}[theorem]{Problem} 
\newtheorem{qu}[theorem]{Question}  
\theoremstyle{definition}
\newtheorem{defn}[theorem]{Definition}
\theoremstyle{remark}
\newcommand{\oldqed}{}
\newenvironment{claimproof}[1][Proof]{
  \renewcommand{\oldqed}{\qedsymbol}
  \renewcommand{\qedsymbol}{\endofClaim}
  \begin{proof}[#1]
}{
  \end{proof}
  \renewcommand{\qedsymbol}{\oldqed}
}
\newcommand{\By}[2]{\overset{\mbox{\tiny{#1}}}{#2}} 
\newcommand{\ByRef}[2]{   \By{\eqref{#1}}{#2} }
\newcommand{\eqByRef}[1]{ \ByRef{#1}{=} }
 \def\Ex{\mathbb{E}}  \def\Pr{\mathbb{P}}
   \def\N{\mathbb{N}}
\newcommand{\Bin}{\textup{Bin}}
\def\Var{\textup{Var}}
\title{Chromatic thresholds in sparse random graphs}
  \author[P. Allen, J. B\"ottcher, S. Griffiths, Y. Kohayakawa \and R. Morris]{Peter Allen, Julia B\"ottcher, Simon Griffiths \\
 Yoshiharu Kohayakawa \and Robert Morris}
 \address{
    Peter Allen, Julia B\"ottcher \hfill\break
    Department of Mathematics, London School of Economics, Houghton Street, London, WC2A~2AE, UK.
 } 
 \email{p.d.allen|j.boettcher@lse.ac.uk}
\address{
    Simon Griffiths \hfill\break
    Department of Statistics, University of Oxford, 1 South Parks Road,
    Oxford, OX1~3TG, UK.
} 
\email{griffith@stats.ox.ac.uk}
\address{
    Yoshiharu Kohayakawa \hfill\break Instituto de Matem\'atica e
    Estat\'{\i}stica, Universidade de S\~ao Paulo, Rua do Mat\~ao 1010,
    05508--090~S\~ao Paulo, Brazil.
  }
  \email{yoshi@ime.usp.br}
  \address{
    Robert Morris\hfill\break
   IMPA, Estrada Dona Castorina 110, Jardim Bot\^anico, Rio de Janeiro, RJ,
   Brazil
 }
 \email{rob@impa.br}
\thanks{
    PA was partially supported by FAPESP (Proc.~2010/09555-7); JB by FAPESP
    (Proc.~2009/17831-7); SG by CNPq (Proc.~500016/2010-2); YK by CNPq
    (Proc.~308509/2007-2); RM by CNPq (Proc.~479032/2012-2 and Proc.~303275/2013-8). This
    research was supported by CNPq (Proc.~484154/2010-9). The authors are
    grateful to NUMEC/USP, N\'ucleo de Modelagem Estoc\'astica e Complexidade of
    the University of S\~{a}o Paulo, and Project MaCLinC/USP, for supporting
    this research. }
\date{\today}
\begin{document}

\begin{abstract}
  The chromatic threshold $\delta_\chi(H,p)$ of a graph $H$ with respect to
  the random graph $G(n,p)$ is the infimum over $d > 0$ such that the
  following holds with high probability: the family of $H$-free graphs $G \subset G(n,p)$ with minimum degree $\delta(G) \ge dpn$ has bounded chromatic number. The study of $\delta_\chi(H) := \delta_\chi(H,1)$ was initiated
  in 1973 by Erd\H{o}s and Simonovits. Recently $\delta_\chi(H)$ was
  determined for all graphs $H$. It is known that $\delta_\chi(H,p) =
  \delta_\chi(H)$ for all fixed $p \in (0,1)$, but that typically
  $\delta_\chi(H,p) \ne \delta_\chi(H)$ if $p = o(1)$. 

  Here we study the problem for sparse random graphs. We determine $\delta_\chi(H,p)$ for most functions~$p = p(n)$ when $H\in\{K_3,C_5\}$, and also for all graphs $H$ with $\chi(H) \not\in \{3,4\}$.
    \end{abstract}

\maketitle

\section{Introduction}

An important recent trend in combinatorics has been the formulation and proof of so-called `sparse random analogues' of many classical extremal and structural results. For example, Kohayakawa, \L uczak and R\"odl~\cite{KLRroth} conjectured almost twenty years ago that Szemer\'edi's theorem on $k$-term arithmetic progressions should hold in a $p$-random subset of $\{1,\ldots,n\}$ if $p \gg n^{-1/(k-1)}$, and that the Erd\H{o}s-Stone theorem should hold in the Erd\H{o}s-R\'enyi random graph $G(n,p)$ if $p \gg n^{-1/m_2(H)}$, where $m_2(H)$ is defined to be the maximum of $\frac{e(F) - 1}{v(F) - 2}$ over all subgraphs $F \subset H$ with $v(F) \ge 3$. The study of these conjectures recently culminated in the extraordinary breakthroughs of Conlon and Gowers~\cite{ConGow} and Schacht~\cite{SchTuran}, who resolved these conjectures (and many others), and in the development of the so-called `hypergraph container method', see~\cite{BMS,ST}. 

In this paper we study a sparse random analogue of the \emph{chromatic
  threshold} $\delta_\chi(H)$ of a graph $H$, which is defined to be the
infimum over $d > 0$ such that every $H$-free graph on $n$ vertices with
minimum degree at least $d n$ has bounded chromatic number. The study of chromatic thresholds was initiated in 1973 by Erd\H{o}s and Simonovits~\cite{ES73}, who were motivated by Erd\H{o}s' famous probabilistic proof~\cite{Erd59} that there exist graphs with arbitrarily high girth and chromatic number. Building on work of (amongst others) \L uczak and Thomass\'e~\cite{LT} and Lyle~\cite{Lyle10}, the chromatic threshold of every graph $H$ was finally determined in~\cite{ChromThresh}, where it was proved that
\begin{equation}\label{eq:CT:thm}
\delta_\chi(H) \, \in \, \bigg\{ \frac{r-3}{r-2}, \, \frac{2r-5}{2r-3}, \, \frac{r-2}{r-1} \bigg\}
\end{equation}
for every graph $H$ with chromatic number $\chi(H) = r \ge 2$. For example, $\delta_\chi(K_3) = 1/3$ and $\delta_\chi(C_{2k+1}) = 0$ for every $k \ge 2$, as was first proved by Thomassen~\cite{Thomassen02,Thomassen07}. 

We will study the following analogue of $\delta_\chi(H)$ in $G(n,p)$. 

\begin{defn}\label{def:deltachip}
Given a graph $H$ and a function $p = p(n) \in [0,1]$, define
\begin{align*}
& \delta_\chi\big( H, p \big) \, := \, \inf \Big\{ d > 0 \,:\, \text{there exists $C > 0$ such that the following holds} \\ 
& \hspace{5cm} \text{with high probability: every $H$-free spanning subgraph }  \\
& \hspace{6.5cm} G \subset G(n,p) \textup{ with } \delta(G) \ge d pn \textup{ satisfies } \chi(G) \le C \Big\}.
\end{align*}
We call $\delta_\chi\big(H,p\big)$ the \emph{chromatic threshold of $H$ with respect to $p$}.
\end{defn}

Note that $\delta_\chi(H) = \delta_\chi(H,1)$, so this definition generalises that of Erd\H{o}s and Simonovits. We emphasise that the constant $C$ is allowed to depend on the graph $H$, the function $p$ and the number $d$, but not on the integer $n$. We also note that if, for some $d$, with high probability there is no spanning $H$-free subgraph of $G(n,p)$ whose average degree exceeds $dpn$, then vacuously we have $\delta_\chi(H,p)\le d$.

In a companion paper~\cite{dense} we showed that $\delta_\chi( H, p ) = \delta_\chi(H)$ for all fixed $p > 0$, and made some progress on determining $\delta_\chi( H, p )$ in the case $p = n^{-o(1)}$. Here we begin the investigation of $\delta_\chi( H, p )$ for sparser random graphs. In particular, we will prove the following theorem, which determines $\delta_\chi( H, p )$ precisely for a large class of graphs and essentially all functions $p = p(n)$. We will write $\pi(H)$ for the Tur\'an density $1 - \frac{1}{\chi(H) - 1}$ of a graph $H$. 

\begin{theorem}\label{thm:classhigh} 
Let $H$ be a graph with $\chi(H) \not\in \{3,4\}$. Then
\begin{equation}\label{eq:thm:classhigh}
\delta_{\chi}(H,p) = \left\{
\begin{array}{cll}
\delta_\chi(H) & \text{if } & p > 0 \text{ is constant,}\smallskip\\ 
\pi(H) & \text{if } & n^{-1/m_2(H)} \ll p \ll 1,\smallskip\\
1 & \text{if } & \frac{\log n}{n} \ll p \ll n^{-1/m_2(H)}.
\end{array} \right.
\end{equation}
The same moreover holds for all graphs $H$ with $\chi(H) = 4$ and $m_2(H) \ge 2$.
\end{theorem}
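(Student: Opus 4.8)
The plan is to prove the three regimes separately, since the first follows from the companion paper and the last two are the substance. For the constant-$p$ regime, $\delta_\chi(H,p)=\delta_\chi(H)$ is exactly the main result of~\cite{dense}, so nothing new is needed. The interesting work is in the two sparse regimes, and the point of the hypothesis $\chi(H)\notin\{3,4\}$ (or $\chi(H)=4$ with $m_2(H)\ge 2$) is that it forces $\chi(H)=r\ge 5$ (or $r=4$ with enough density), which by~\eqref{eq:CT:thm} means $\delta_\chi(H)=\frac{r-2}{r-1}=\pi(H)$ already in the dense case; thus in the middle regime the target value $\pi(H)$ is the \emph{same} number, and we are really asking that the chromatic threshold does not drop as $p$ decreases through $n^{-o(1)}$ down to $n^{-1/m_2(H)}$.

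For the middle regime $n^{-1/m_2(H)}\ll p\ll 1$, I would prove matching upper and lower bounds on $\delta_\chi(H,p)$. The \emph{upper bound} $\delta_\chi(H,p)\le\pi(H)$: given $d>\pi(H)$ and an $H$-free spanning $G\subset G(n,p)$ with $\delta(G)\ge dpn$, apply the sparse (minimum-degree form) Szemer\'edi Regularity Lemma to $G$ inside $G(n,p)$ to obtain a reduced graph $R$; since $d>\pi(H)=1-\frac1{r-1}$, the reduced graph has minimum degree ratio above the Tur\'an threshold and therefore contains a $K_r$; the sparse Counting/Embedding Lemma (valid because $p\gg n^{-1/m_2(H)}$, using the $K_{\mathrm{\L R}}$-type embedding results or the container machinery) then embeds $H$, a contradiction---hence in fact every $H$-free $G$ with such minimum degree is $(r-1)$-partite up to a small defect, giving bounded (indeed constant) chromatic number via the stability/colouring argument of~\cite{ChromThresh} transported to the sparse setting. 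The \emph{lower bound} $\delta_\chi(H,p)\ge\pi(H)$: for any $d<\pi(H)$ one must exhibit, with high probability, an $H$-free spanning $G\subset G(n,p)$ with $\delta(G)\ge dpn$ and unbounded chromatic number. The natural construction is to take a balanced blow-up of a high-girth, high-chromatic-number graph inside the $(r-1)$-partite-like structure: partition $[n]$ into $r-1$ nearly-equal parts, keep all $G(n,p)$-edges between parts, and inside one part (or across a carefully chosen sub-configuration) plant a sparse pseudorandom $K_{r-1}$-free-preserving copy of an Erd\H{o}s-style graph with large girth so that no $H$ appears but the chromatic number grows; one checks $H$-freeness using that $p\ll 1$ keeps the pseudorandom piece locally sparse, and checks the minimum degree is $(1-\frac1{r-1}-o(1))pn\ge dpn$.

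For the last regime $\frac{\log n}{n}\ll p\ll n^{-1/m_2(H)}$, the value is $1$, which by the remark after Definition~\ref{def:deltachip} we may establish simply by showing that \emph{with high probability there is no spanning $H$-free subgraph $G\subset G(n,p)$ with average degree exceeding $dpn$} for any $d<1$---equivalently, that one cannot delete a $o(pn)$-fraction of edges at most vertices and destroy all copies of $H$. Because $p\ll n^{-1/m_2(H)}$, the copies of $H$ in $G(n,p)$ are sparse and essentially ``spread out'': the densest subgraph $F\subseteq H$ achieving $m_2(H)$ has expected count $n^{v(F)}p^{e(F)}$ with $p^{e(F)-1}\ll n^{-(v(F)-2)}$, so copies of $H$ overlap only in controlled ways, and a second-moment / Janson-type argument shows that at almost every vertex $v$ the copies of $H$ through $v$ edge-cover essentially all of the $pn(1-o(1))$ edges at $v$ in a robust way (they form an ``expander-like'' hypergraph on the neighbourhood), so no $H$-free subgraph can retain $(1-o(1))pn$ edges at a positive fraction of vertices. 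I would make this precise with a union bound over vertices together with a concentration inequality (Warnke's inequality is the tool quoted in the excerpt for exactly such polynomial-of-edge-indicators quantities), perhaps after first passing to the subgraph $F$ realising $m_2(H)$.

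The main obstacle, I expect, is the middle-regime upper bound: transferring the Regularity-plus-Counting argument of~\cite{ChromThresh} to $G(n,p)$ at the threshold $p\gtrsim n^{-1/m_2(H)}$ requires the sparse Counting Lemma to be available \emph{uniformly over the exceptional-set structure} that the minimum-degree form of the sparse Regularity Lemma produces, and in the sparse setting one must contend with the fact that regular pairs can still fail to contain the desired embeddings without an extra ``no dense spots'' hypothesis---so one has to either invoke the $K_{\mathrm{\L R}}$ conjecture (now a theorem via the container method) in the right quantitative form, or run the container/transference argument directly on the family of $H$-free graphs of large minimum degree. Squeezing the colouring conclusion $\chi(G)\le C$ out of the resulting approximate $(r-1)$-partition---rather than merely $\chi(G)=o(n)$---is the other delicate point, and mirrors the interval-chromatic/stability analysis of~\cite{ChromThresh}, which I would adapt rather than reprove.
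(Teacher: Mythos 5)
Your treatment of the regime $\frac{\log n}{n}\ll p\ll n^{-1/m_2(H)}$ is backwards, and this is a fatal gap. You propose to establish $\delta_\chi(H,p)=1$ by showing that with high probability \emph{no} $H$-free spanning subgraph with average degree exceeding $dpn$ exists for $d<1$. But by the remark after Definition~\ref{def:deltachip}, nonexistence gives the \emph{upper} bound $\delta_\chi(H,p)\le d$; proving it for all $d<1$ would force $\delta_\chi(H,p)=0$, the opposite of the claim. To get $\delta_\chi(H,p)=1$ one must show that for every $d<1$ there \emph{does} exist an $H$-free spanning subgraph with $\delta(G)\ge dpn$ and unbounded chromatic number. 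Moreover your supporting claim is false in this range: with $F\subset H$ realising $m_2(H)$, the expected number of copies of $F$ through a vertex is $\Theta(n^{v(F)-1}p^{e(F)})$, which equals $pn$ exactly at $p=n^{-1/m_2(H)}$ and is $o(pn)$ below it. So $F$-copies are rare, not edge-covering, and the correct argument (Proposition~\ref{prop:verysmallp}) deletes one edge from each copy of $F$, using a Kim--Vu polynomial concentration bound to show only $o(pn)$ edges are lost at each vertex, while enough edges survive globally that every set of size $2\sqrt{\gamma}n$ still spans an edge, forcing $\chi(G)\ge\tfrac12\gamma^{-1/2}$.

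For the middle regime your lower-bound sketch also has a real gap: you cannot ``plant'' an Erd\H{o}s-style high-girth, high-chromatic graph — it must be found as a subgraph of $G(n,p)$, and, crucially, on at most $\eps/p$ vertices (otherwise attaching its $G(n,p)$-neighbourhoods destroys the near-$(r-1)$-partite structure and the minimum degree computation). Finding such a small subgraph with girth, chromatic number and expansion all large is the main new technical content of the paper (Proposition~\ref{prop:rob}), proved via a second-moment computation combined with a conditioning lemma (Lemma~\ref{lem:EF}); it is also where the hypothesis $m_2(H)\ge2$ enters, since the construction needs $p\gg n^{-1/2}$ — not, as you suggest, because of the value of $\delta_\chi(H)$ in~\eqref{eq:CT:thm} (which need not equal $\pi(H)$ even for $r\ge5$). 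Finally, your middle-regime upper bound is far harder than necessary: since $d>\pi(H)$ and $\delta(G)\ge dpn$ force $e(G)>(\pi(H)+\gamma)p\binom{n}{2}$, the Conlon--Gowers/Schacht sparse Erd\H{o}s--Stone theorem already shows no such $H$-free $G$ exists, so the bound holds vacuously; no regularity, counting lemma, or colouring of an approximate partition is needed, and the ``main obstacle'' you identify does not arise.
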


We remark that~\eqref{eq:thm:classhigh} does \emph{not} hold for all $3$-chromatic graphs; for example, it does not hold when~$H= C_{2k+1}$ for any $k \ge 2$, see Theorems~\ref{thm:C5} and~\ref{thm:Clong} below. We suspect that it does not hold for every $4$-chromatic graph, though we do not have a counter-example, see Proposition~\ref{prop:inf42} and Conjecture~\ref{conj:chi4}. Note that if $p \ll \frac{\log n}{n}$ then $\delta_{\chi}(H,p) = 0$, since with high probability $G(n,p)$ has an isolated vertex, so the condition in the definition holds vacuously. We also remark that the proof of Theorem~\ref{thm:classhigh} uses a general probabilistic lemma (Lemma~\ref{lem:EF}), which appears to be new, and may be of independent interest.   

For graphs $H$ with $\chi(H) = 3$, it was shown in~\cite{dense} that the situation is significantly more complicated, even in the case $p = n^{-o(1)}$. We will therefore restrict ourselves to studying one specific family of particular interest, namely the odd cycles. (For a brief discussion of more general 3-chromatic graphs, see Section~\ref{sec:open}.) For $K_3$ we will show that the pattern is the same as in Theorem~\ref{thm:classhigh}.

\begin{theorem}\label{thm:K3} 
We have
\begin{equation*}
\delta_{\chi}(K_3,p) = \left\{
\begin{array}{cll}
\tfrac{1}{3} & \text{if } & p > 0 \text{ is constant}\smallskip\\ 
\tfrac{1}{2} & \text{if } & n^{-1/2} \ll p \ll 1\smallskip\\
1 & \text{if } & \frac{\log n}{n} \ll p \ll n^{-1/2}.
\end{array} \right.
\end{equation*}
\end{theorem}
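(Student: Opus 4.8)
The three regimes require quite different arguments, but two of them are essentially special cases of results we already have. The top line, $p>0$ constant, is exactly $\delta_\chi(K_3,p)=\delta_\chi(K_3)=1/3$ by the companion paper~\cite{dense}, so nothing new is needed there. The third line, $\tfrac{\log n}{n}\ll p\ll n^{-1/2}$, should follow from the sparse random version of the Erd\H{o}s--Stone/Tur\'an phenomenon together with a supersaturation-type argument: when $p\ll n^{-1/2}=n^{-1/m_2(K_3)}$, with high probability $G(n,p)$ contains a triangle-free spanning subgraph $G$ with $\delta(G)\ge(1-o(1))pn$ — indeed one can take essentially all of $G(n,p)$ after deleting one edge from each triangle, since the expected number of triangles is $o(np)$ times the number of edges, so deleting a negligible fraction of edges destroys all of them while keeping the minimum degree close to $pn$ (after first removing a few low-degree vertices). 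A triangle-free graph on $n$ vertices with a linear number of edges can have unbounded chromatic number (Erd\H{o}s), so this shows $\delta_\chi(K_3,p)=1$. The one point to be careful about is the minimum-degree condition rather than the average-degree condition; this is handled by the standard trick of iteratively deleting vertices of degree below $(1-\eps)pn$, which removes only $o(n)$ vertices since the total number of edges is $(1+o(1))\binom n2 p$.

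The substantive new content is the middle regime, $n^{-1/2}\ll p\ll 1$, where we must show both $\delta_\chi(K_3,p)\le 1/2$ and $\delta_\chi(K_3,p)\ge 1/2$. For the \emph{upper bound} $\delta_\chi(K_3,p)\le 1/2$: fix $d>1/2$ and let $G\subset G(n,p)$ be triangle-free with $\delta(G)\ge dpn$. The plan is to run a sparse regularity / stability argument. Apply the sparse regularity lemma (in the minimum-degree form quoted in the excerpt) to $G$ inside $G(n,p)$; since $p\gg n^{-1/m_2(K_3)}$, the associated counting lemma is available, so a triangle in the reduced graph would force a triangle in $G$. Hence the reduced graph is triangle-free with minimum degree close to $d>1/2$ times its order, and by the (ordinary) Andr\'asfai--Erd\H{o}s--S\'os theorem the reduced graph is bipartite. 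Pulling this back, $G$ is, up to a sparse error, contained in a bipartite-like structure; one then uses that the min-degree condition $d>1/2$ forces the two sides to have nearly equal size and the error to be genuinely negligible, and concludes via a standard argument (as in the proof of $\delta_\chi(K_3)=1/3$, or the stability method of \L uczak--Thomass\'e) that $\chi(G)$ is bounded. For the \emph{lower bound} $\delta_\chi(K_3,p)\ge 1/2$: we need, for every $d<1/2$, a triangle-free spanning subgraph $G\subset G(n,p)$ with $\delta(G)\ge dpn$ and $\chi(G)$ unbounded. Take a triangle-free graph $F$ on a bounded number $k$ of vertices with large chromatic number (a Kneser graph, say), blow it up: partition $[n]$ into $k$ parts of size $n/k$ indexed by $V(F)$, and let $G$ consist of those edges of $G(n,p)$ joining parts $i,j$ with $ij\in E(F)$. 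Then $G$ is triangle-free (it maps homomorphically to $F$), $\chi(G)\ge\chi(F)$ is large, and each vertex has degree $(1+o(1))\cdot\frac{d_F(v)}{k}\cdot pn\ge(1+o(1))\cdot\frac{\delta(F)}{k}\,pn$ by Chernoff and a union bound — valid since $p\gg n^{-1/2}\gg\frac{\log n}{n}$. Choosing $F$ with $\delta(F)/k$ as close to $1/2$ as possible (Kneser graphs $K(2t+1,t)$ have $\delta(F)/v(F)=(t+1)/(2t+1)\to 1/2$, and are triangle-free) gives min-degree $(1/2-o(1))pn$, hence the lower bound.

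The main obstacle is the upper bound in the middle regime: one must verify that the sparse regularity method genuinely applies at this density and that the stability step is robust enough to yield a \emph{bounded} (not merely $o(n)$) chromatic number. The delicate points are (i) checking the hypotheses of the sparse regularity lemma and counting lemma for triangle-free subgraphs of $G(n,p)$ at $p\gg n^{-1/2}$ — in particular that $G$ inherits enough regularity from $G(n,p)$, which is where one typically invokes that $G(n,p)$ is "upper-uniform" or uses the $K_3$-version of the KLR conjecture, now a theorem; and (ii) turning the structural conclusion "$G$ is close to bipartite" into a genuine bound on $\chi(G)$, which requires controlling the vertices that violate the bipartition — here the min-degree hypothesis $d>1/2$ is what makes this set controllable, via an argument of \L uczak--Thomass\'e type showing that a triangle-free graph that is $\eps$-close to bipartite and has min-degree $>(1/2+\eps)$ times the (appropriately weighted) order is in fact boundedly chromatic. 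Everything else — the two easy regimes and the lower-bound blow-up construction — is routine given the tools already in the excerpt.
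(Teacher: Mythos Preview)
Your lower bound in the middle regime is fatally flawed. The Kneser graphs $K(2t+1,t)$ have $\binom{2t+1}{t}$ vertices (not $2t+1$), so $\delta(F)/v(F)=(t+1)/\binom{2t+1}{t}\to 0$; moreover $\chi\big(K(2t+1,t)\big)=3$ for every $t\ge 1$ by Lov\'asz's theorem, so this family never has large chromatic number. But the problem is not just the example: the blow-up strategy \emph{cannot} work in principle. Any triangle-free template $F$ with $\delta(F)/v(F)>\tfrac13+\eps$ has bounded chromatic number---that is precisely the statement $\delta_\chi(K_3)=\tfrac13$. Hence a blow-up can never certify $\delta_\chi(K_3,p)>\tfrac13$, and you are a factor of $\tfrac12$ versus $\tfrac13$ short.

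The paper's construction (Proposition~\ref{prop:cloud:lower}) gets around this by not using a fixed template at all. Via Proposition~\ref{prop:rob} one first finds inside $G(n,p)$ a \emph{tiny} subgraph $F$ on $\eps/p$ vertices with girth and chromatic number both large; then one builds $G$ by giving each $u\in V(F)$ a private neighbourhood $I_u\subset Y$ of size $\approx pn/2$, and making the remainder bipartite between $V_1=(X\setminus V(F))\cup\bigcup_u I_u$ and $V_2$. The crucial point is that the high-$\chi$ piece $F$ is of size $o(n)$, so its vertices need not get their degree from a dense template---they get it from the $I_u$. One then checks that every small subgraph of $G$ is a cloud-forest graph, hence $G$ is $K_3$-free; this is where the high girth of $F$ and the disjointness of the $I_u$ are used. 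Proving Proposition~\ref{prop:rob} (that such an $F$ exists in $G(n,p)$ when $v(F)\le\eps/p$, which is \emph{below} the scale at which $G(n,p)$ typically has these properties) is the real work, and requires the variance calculation of Lemma~\ref{lem:var} together with the conditional-probability Lemma~\ref{lem:EF}.

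Your upper bound in the middle regime is not wrong, but it is a long detour. The paper simply invokes Theorem~\ref{thm:CGS}: for $p\gg n^{-1/2}$ no triangle-free $G\subset G(n,p)$ has more than $(\tfrac12+\gamma)p\binom{n}{2}$ edges, hence none has $\delta(G)\ge(\tfrac12+\gamma)pn$, and $\delta_\chi(K_3,p)\le\tfrac12$ holds vacuously. Your regularity route would eventually reach the same vacuous conclusion (a triangle-free reduced graph with $\delta(R)>k/2$ cannot exist), so the stability and ``pulling back'' steps you worry about never actually arise.
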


For $C_5$, on the other hand, the behaviour is more complex. 

\begin{theorem}\label{thm:C5} 
We have
\begin{equation*}
\delta_{\chi}(C_5,p) = \left\{
\begin{array}{cll}
0 & \text{if } & p > 0 \text{ is constant}\smallskip\\ 
\tfrac{1}{3} & \text{if } & n^{-1/2} \ll p \ll 1\smallskip\\
\tfrac{1}{2} & \text{if } & n^{-3/4} \ll p \ll n^{-1/2}\smallskip\\
1 & \text{if } & \frac{\log n}{n} \ll p \ll n^{-3/4}.
\end{array} \right.
\end{equation*}
\end{theorem}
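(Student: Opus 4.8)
The plan is to prove matching upper and lower bounds for $\delta_\chi(C_5,p)$ in each of the four ranges, keeping in mind the two relevant thresholds: $n^{-1/m_2(C_5)}=n^{-3/4}$ (below which, with high probability, each vertex of $G(n,p)$ lies in only $o(pn)$ copies of $C_5$) and $n^{-1/2}$ (above which each vertex lies in $\omega(pn)$ triangles). Two of the upper bounds need nothing new: for constant $p$ the estimate $\delta_\chi(C_5,p)\le\eps$ for all $\eps>0$ is exactly the content of the companion paper~\cite{dense}, and for $\tfrac{\log n}{n}\ll p\ll n^{-3/4}$ the estimate $\delta_\chi(C_5,p)\le1$ is vacuous since $\delta(G)\le(1+o(1))pn$ for every $G\subseteq G(n,p)$. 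For the matching lower bound $\delta_\chi(C_5,p)\ge1$ in this sparsest range, fix $d<1$. Since $p\ll n^{-3/4}$, with high probability every vertex of $G(n,p)$ lies in only $o(pn)$ copies of $C_5$ (the subgraph count is concentrated by Warnke's inequality), so deleting one edge from each copy of $C_5$ removes $o(pn)$ edges at every vertex and yields a $C_5$-free $G\subseteq G(n,p)$ with $\delta(G)\ge(1-o(1))pn\ge dpn$. Finally, if $G$ is obtained from $G'$ by deleting an edge set of maximum degree $k$ then $\chi(G')\le(k+1)\chi(G)$; applying this with $G'=G(n,p)$ and using $\chi(G(n,p))=\Theta\big(pn/\log(pn)\big)$ gives $\chi(G)\to\infty$, as required.

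The two middle lower bounds are the core of the theorem. For each $\eps>0$ I need a $C_5$-free $G\subseteq G(n,p)$ with $\chi(G)\to\infty$ and $\delta(G)\ge(\tfrac13-\eps)pn$ when $n^{-1/2}\ll p\ll1$, respectively $\delta(G)\ge(\tfrac12-\eps)pn$ when $n^{-3/4}\ll p\ll n^{-1/2}$; the transfer of a suitable extremal configuration into $G(n,p)$ with the correct minimum degree would be carried out via the general construction lemma of~\cite{dense} together with Lemma~\ref{lem:EF}. Since $\delta_\chi(C_5)=0$, neither $G$ can be a blow-up of a fixed graph, so the sparsity of $G(n,p)$ must be used in an essential way. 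For $n^{-3/4}\ll p\ll n^{-1/2}$ the key point is that $G(n,p)$ then contains only $o(e(G(n,p)))$ triangles, so after deleting a few edges per vertex one may work inside an essentially triangle-free host; taking a balanced bipartition of this host and adding to one side a carefully chosen sparse graph of unbounded chromatic number, one checks that the absence of triangles together with $p\ll n^{-1/2}$ (so that almost no pair of vertices has a common neighbour) kills every one of the finitely many five-vertex configurations that could close up into a $C_5$. For $n^{-1/2}\ll p\ll1$ triangles are instead plentiful and the construction exploits them, which is why the attainable value drops to $\tfrac13$, matching $\delta_\chi(K_3)$ and Theorem~\ref{thm:K3}.

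For the matching upper bounds I would fix $d$ exceeding $\tfrac12$ (respectively $\tfrac13$) and a $C_5$-free $G\subseteq G(n,p)$ with $\delta(G)\ge dpn$, and show $\chi(G)$ is bounded. Applying the sparse minimum-degree form of Szemer\'edi's regularity lemma yields a reduced graph $R$ on $t$ clusters with $\delta(R)\ge(d-o(1))t$, and since $p\gg n^{-1/m_2(C_5)}=n^{-3/4}$ in both ranges the counting lemma applies: a homomorphic image of $C_5$ in $R$ (with the corresponding pairs $\eps$-regular of relative density at least $\eps$) would produce a copy of $C_5$ in $G$, and as the only triangle-free such image is $C_5$ itself, $R$ is $\{C_3,C_5\}$-free. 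When $n^{-3/4}\ll p\ll n^{-1/2}$ we then have $\delta(R)>\tfrac25 t$, so the Andr\'asfai--Erd\H{o}s--S\'os theorem forces $R$ bipartite; combining this with the structural analysis of $C_5$-free graphs from~\cite{LT,dense}, applied to the within-cluster and sparse-pair parts of $G$ (which are themselves $C_5$-free), gives $\chi(G)$ bounded, i.e.\ $\delta_\chi(C_5,p)\le\tfrac12$. When $n^{-1/2}\ll p\ll1$ one only gets $\delta(R)>\tfrac13 t$, which does not by itself force $R$ bipartite; here the extra input is that $p\gg n^{-1/2}$ makes triangles so abundant in $G(n,p)$ that a $C_5$-free $G$ of this minimum degree cannot support such a near-extremal $R$ at all, forcing $\chi(R)$ — and hence $\chi(G)$ — to be bounded, which gives $\delta_\chi(C_5,p)\le\tfrac13$.

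The hard part will be the lower-bound construction in the range $n^{-3/4}\ll p\ll n^{-1/2}$: because $\delta_\chi(C_5)=0$ there is no fixed template to transfer, and one must build from scratch a $C_5$-free subgraph of $G(n,p)$ of minimum degree $(\tfrac12-\eps)pn$ and unbounded chromatic number whose $C_5$-freeness is a genuine consequence of the scarcity of triangles and common neighbourhoods below $p=n^{-1/2}$. Closely related, and also delicate, is making the threshold $n^{-1/2}$ emerge correctly on the upper-bound side — proving $\delta_\chi(C_5,p)\le\tfrac13$ above it but only $\le\tfrac12$ below it — which amounts to quantifying precisely how the triangles that $G(n,p)$ is forced to contain above $n^{-1/2}$ obstruct dense $C_5$-free subgraphs with many colours.
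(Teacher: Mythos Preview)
Your outline has the right shape at the two ends, but the two middle upper bounds contain a genuine gap. For $n^{-3/4}\ll p\ll n^{-1/2}$ the bound $\delta_\chi(C_5,p)\le\tfrac12$ is \emph{vacuous}: by the sparse Erd\H os--Stone theorem (Theorem~\ref{thm:CGS}) there is, with high probability, no $C_5$-free $G\subset G(n,p)$ with $\delta(G)\ge(\tfrac12+\gamma)pn$ at all. Your regularity route is unnecessary here, and in fact does not finish: knowing that the reduced graph $R$ is bipartite does \emph{not} bound $\chi(G)$ in the sparse setting, because the ``leftover'' graph (edges inside clusters, in $V_0$, and in sparse or irregular pairs) can have arbitrarily large chromatic number even though it has $o(pn^2)$ edges. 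The vague appeal to ``structural analysis from~\cite{LT,dense} applied to the within-cluster parts'' does not help, since those parts have no usable minimum-degree condition.

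The same gap is fatal for your $\le\tfrac13$ argument when $n^{-1/2}\ll p\ll1$, and here the bound is \emph{not} vacuous, so you really must colour $G$. The paper does not try to pass through $\chi(R)$. Instead it defines the \emph{robust second neighbourhood} $N^*_2(v)=\{w:|N(v)\cap N(w)|\ge dp^2n\}$ and sets $X_i=\{v:|N^*_2(v)\cap V_i|\ge(\tfrac12+d)|V_i|\}$. If $u,v\in X_i$ are adjacent then $N^*_2(u)\cap N^*_2(v)\cap V_i\neq\emptyset$, and any $w$ in that intersection, together with distinct $u'\in N(u)\cap N(w)$ and $v'\in N(v)\cap N(w)$ (which exist precisely because $p\gg n^{-1/2}$ makes $dp^2n>4$), gives $uu'wv'v\cong C_5$. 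So each $X_i$ is independent. A separate counting argument using $\delta(R)>\tfrac13k$ shows every vertex lies in some $X_i$, whence $\chi(G)\le k$. This is where the threshold $n^{-1/2}$ really enters on the upper-bound side --- not through ``triangles being abundant'', but through common neighbourhoods having size $\gg1$. Similarly, your intuition for the lower bound $\ge\tfrac13$ is off: the construction (Proposition~\ref{prop:thundercloud:lower}) does not ``exploit triangles'' but transplants the {\L}uczak--Thomass\'e acyclic structure onto a small high-girth subgraph found via Proposition~\ref{prop:rob}; the resulting $G$ is in fact locally a thundercloud-forest graph.
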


We believe that this is the first example of a random analogue of a natural extremal result which has been shown to exhibit several non-trivial phase transitions. Finally, for longer odd cycles we will obtain only the following partial description.

\begin{theorem}\label{thm:Clong} 
For every $k \ge 3$, we have
\begin{equation*}
\delta_{\chi}(C_{2k+1},p) = \left\{
\begin{array}{cll}
0 & \text{if } & p \gg n^{-1/2}\smallskip\\
\frac{1}{2} & \text{if } & n^{-(2k-1)/2k} \ll p \ll n^{-(2k-3)/(2k-2)}\smallskip \\
1 & \text{if } & \frac{\log n}{n} \ll p \ll n^{-(2k-1)/2k}.\smallskip
\end{array} \right.
\end{equation*}
\end{theorem}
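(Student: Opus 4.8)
\medskip

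Our plan is to treat the three ranges of $p$ separately, in each case following the strategy already developed for Theorems~\ref{thm:K3}, \ref{thm:C5} and \ref{thm:classhigh}. Throughout we use the numerology $m_2(C_{2k+1}) = \tfrac{2k}{2k-1}$, $m_2(C_{2k-1}) = \tfrac{2k-2}{2k-3}$ and $\pi(C_{2k+1}) = \tfrac12$, so that $n^{-1/m_2(C_{2k+1})} = n^{-(2k-1)/2k}$ and $n^{-1/m_2(C_{2k-1})} = n^{-(2k-3)/(2k-2)}$. Two of the bounds are essentially immediate. First, for every $p \gg n^{-(2k-1)/2k}$ the sparse Erd\H{o}s--Stone theorem of Conlon--Gowers and Schacht shows that whp every $C_{2k+1}$-free subgraph of $G(n,p)$ has at most $(\tfrac12 + o(1))e(G(n,p))$ edges, hence average degree at most $(\tfrac12+o(1))pn$; by the remark following Definition~\ref{def:deltachip} this already gives $\delta_\chi(C_{2k+1},p) \le \tfrac12$, which is exactly the upper bound needed in the middle range (and part of what is needed in the densest range). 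Second, for $\tfrac{\log n}{n} \ll p \ll n^{-(2k-1)/2k}$ we repeat the proof of the third case of Theorem~\ref{thm:classhigh}: Lemma~\ref{lem:EF}, applied with forbidden graph $C_{2k+1}$, shows that whp $G(n,p)$ has a spanning $C_{2k+1}$-free subgraph $G$ with $\delta(G) \ge (1-o(1))pn$ in which only $o(pn)$ edges at each vertex have been deleted, and since $p \gg \tfrac{\log n}{n}$ one checks $\chi(G) \to \infty$: whp any independent set $I$ of $G$ with $|I| = s = \omega(\sqrt{(\log n)/p})$ spans $\gtrsim ps^2$ edges of $G(n,p)$, all of which are deleted, so some vertex has deleted-degree $\gtrsim ps$; as deleted-degrees are $o(pn)$ this forces $s = o(n)$, whence $\chi(G) \ge n/\alpha(G) \to \infty$. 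Combined with the trivial $\le 1$ (whp $\Delta(G(n,p)) \le (1+o(1))pn$), this gives $\delta_\chi(C_{2k+1},p) = 1$ in the sparsest range.

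\medskip

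The substantive upper bound is $\delta_\chi(C_{2k+1},p) = 0$ for $p \gg n^{-1/2}$. The plan is to transfer the dense theorem $\delta_\chi(C_{2k+1}) = 0$ of Thomassen and \L uczak--Thomass\'e to $G(n,p)$, just as the second case of Theorem~\ref{thm:classhigh} transfers the dense value $\tfrac{r-2}{r-1}$. Given a $C_{2k+1}$-free $G \subset G(n,p)$ with $\delta(G) \ge dpn$, we apply the sparse minimum-degree regularity lemma and pass to a reduced graph $R$ with $\delta(R) \ge (d - o(1))|R|$; since $p \gg n^{-1/2} \gg n^{-(2k-1)/2k} = n^{-1/m_2(C_{2k+1})}$, the sparse Counting Lemma forces $R$ to be $C_{2k+1}$-free, whence the dense theorem yields $\chi(R) = O_{d,k}(1)$. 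The delicate part is to turn a bounded colouring of $R$ into a bounded colouring of $G$: the clusters of a sparse regular partition are only sparse in the $G(n,p)$-relative sense, so a priori $G$ restricted to a cluster could have unbounded chromatic number, and one must instead use the $C_{2k+1}$-freeness of $G$ more globally --- through the neighbourhood/VC-dimension structure of $C_{2k+1}$-free graphs, as in the companion paper~\cite{dense} --- to show that $V(G)$ admits a bounded partition into parts that are almost independent and pairwise behave like a $C_{2k+1}$-free reduced graph. This is where the threshold $n^{-1/2}$ is essential: controlling the interaction between this cluster structure and the $C_{2k+1}$-freeness requires that typical pairs of vertices of $G(n,p)$ have $\gg 1$ common neighbours, i.e.\ $p \gg n^{-1/2}$, and the argument breaks down below this point --- which is why the range $n^{-(2k-3)/(2k-2)} \ll p \ll n^{-1/2}$ is left open.

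\medskip

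The main obstacle is the remaining lower bound, $\delta_\chi(C_{2k+1},p) \ge \tfrac12$ for $n^{-(2k-1)/2k} \ll p \ll n^{-(2k-3)/(2k-2)}$: for every fixed $C$ we must produce, whp, a $C_{2k+1}$-free spanning $G \subset G(n,p)$ with $\delta(G) \ge (\tfrac12 - o(1))pn$ and $\chi(G) > C$. Since $p$ lies above the $C_{2k+1}$-threshold, $G(n,p)$ has $\Theta((pn)^{2k+1})$ copies of $C_{2k+1}$ which no sparse edge-set meets, and any bipartite subgraph of $G(n,p)$ with minimum degree $\sim \tfrac12 pn$ joins all but $o(n^2)$ pairs of vertices on one side by a path of length $2k$; so $G$ can be obtained neither by deleting few edges nor by inserting a high-chromatic graph inside one side of a bipartition, and the $C_{2k+1}$-freeness has to be built in globally. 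Following the construction used for the analogous range of Theorem~\ref{thm:C5} (the case $k=2$), the plan is: (i) choose a fixed ``pattern'' graph $F$ with $\chi(F) > C$ together with a fractional vertex-weighting under which every vertex of $F$ sees total weight $\ge \tfrac12 - \varepsilon$, so that the corresponding weighted blow-up of $F$ inside $G(n,p)$ has minimum degree $(\tfrac12 - o(1))pn$; (ii) for each closed walk of length $2k+1$ in $F$ --- every one of which forces copies of $C_{2k+1}$ into the blow-up --- delete all $G(n,p)$-edges between one low-weight pair of parts lying on that walk, choosing $F$ so that these deletions cost $o(pn)$ at every vertex yet leave a subgraph of unbounded chromatic number; and (iii) verify $\chi(G) > C$ by a regularity/counting argument for $F$ inside the resulting sparse blow-up. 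The hypothesis $p \ll n^{-(2k-3)/(2k-2)} = n^{-1/m_2(C_{2k-1})}$ enters in step (ii) in an essential way: it is equivalent to $G(n,p)$ containing only $o(e(G(n,p)))$ copies of each of $C_3, C_5, \ldots, C_{2k-1}$, so that $G(n,p)$ may first be thinned to have no short odd cycle; this confines the copies of $C_{2k+1}$ to those aligned with the pattern $F$ rather than letting them proliferate, which is what makes step (ii) affordable. The crux --- and the reason only a partial description is obtained --- is to produce a pattern $F$ that is simultaneously of unbounded chromatic number, ``fractionally dense'' enough for the blow-up to reach minimum degree $\sim \tfrac12 pn$, and has all of its length-$(2k+1)$ closed walks destroyable cheaply; it is precisely this balance that fails once $p$ exceeds $n^{-(2k-3)/(2k-2)}$.
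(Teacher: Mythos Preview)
Your proposal has genuine gaps in two of the three substantive steps.

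\textbf{The upper bound for $p \gg n^{-1/2}$.} You reduce to showing $\chi(R) = O(1)$ via the dense theorem $\delta_\chi(C_{2k+1})=0$, and then acknowledge that passing from $\chi(R)$ bounded to $\chi(G)$ bounded is ``delicate'' and wave at VC-dimension arguments from~\cite{dense}. This is not a proof: the clusters of a sparse regular partition are not close to independent, and a bounded colouring of $R$ does not by itself give one of $G$. The paper avoids this entirely. It never invokes the dense chromatic-threshold result, nor the Counting Lemma. Instead it defines for each cluster $V_i$ the set $X_i$ of vertices $v$ whose \emph{robust second neighbourhood} $N_2^*(v)$ (vertices sharing $\ge dp^2n$ common $G$-neighbours with $v$) meets $V_i$ in at least $d|V_i|$ vertices. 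A simple edge-count shows every vertex lies in some $X_i$; and if $u,v\in X_i$ were adjacent, one takes any regular pair $(V_i,V_j)$, greedily builds a path of length $2k-4$ in $G[V_i,V_j]$ from $N_2^*(u)\cap V_i$ to $N_2^*(v)\cap V_i$, and closes it through $u$ and $v$ to a $C_{2k+1}$. This uses $p\gg n^{-1/2}$ only to make $dp^2 n$ large, exactly as you suspected.

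\textbf{The lower bound in the middle range.} Your plan is a weighted blow-up of a fixed pattern $F$ with $\chi(F)>C$ and fractional minimum degree $\ge\tfrac12-\eps$, from which you delete edges on low-weight pairs to kill all closed $(2k{+}1)$-walks. But after those deletions the pattern itself has odd girth $>2k{+}1$ and fractional minimum degree $\ge\tfrac12-2\eps$, so \emph{by the very dense theorem you invoke elsewhere} it has bounded chromatic number --- the pattern you seek does not exist. (You even flag producing $F$ as ``the crux''; it is an obstruction, not a crux.) The paper's construction (Proposition~\ref{prop:C5:simon}) is quite different and much simpler: take an equipartition $V=X\cup Y$; inside a tiny set $X'\subset X$ of size $\omega/p$ find $F$ with $\Delta(F)\le 2\omega$, girth $\ge 3k$ and $\chi(F)\to\infty$; then add edges of $G(n,p)$ between $X$ and $Y$ greedily, rejecting an edge only if it would complete a $C_{2k+1}$ or give a $Y$-vertex more than $2\omega$ neighbours in $V(F)$. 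The point is that any $(2k)$-path in the resulting graph must pass through two consecutive $F$-vertices, and since $\Delta(F)\le 2\omega$ the number of such paths from a fixed vertex is at most $O\big(\omega^2(pn)^{2k-2}\big)$; the condition $p\ll n^{-(2k-3)/(2k-2)}$ is exactly what makes $p\cdot\omega^2(pn)^{2k-2}=o(pn)$, so few edges are rejected. Your explanation of the threshold via thinning out $C_3,\dots,C_{2k-1}$ is not the mechanism the paper uses.

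\textbf{Minor point.} For the sparsest range you cite Lemma~\ref{lem:EF}; that lemma is the variance-versus-conditional-probability tool used for Proposition~\ref{prop:rob}, not the edge-deletion argument you describe. You want Proposition~\ref{prop:verysmallp} (via Kim--Vu concentration, Lemma~\ref{lem:Fconc}).
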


We suspect that $\delta_{\chi}(C_{2k+1},p) = 0$ for all $p \gg
n^{-(k-2)/(k-1)}$, see Section~\ref{sec:open}, and that the techniques
introduced in this paper could potentially be extended to prove this
conjecture. However, we do not know what value to expect for
$\delta_{\chi}(C_{2k+1},p)$ in the remaining range.

Finally, we remark that Theorem~\ref{thm:Clong} proves a particular case of a much more general conjecture (see~\cite[Conjecture~1.6]{dense}) which states that, in the range $p = n^{-o(1)}$, the 3-chromatic graphs $H$ for which $\delta_\chi(H,p) = 0$ are exactly the `thundercloud-forest graphs', see Definition~\ref{def:cloudforest}. Together with the results of this paper and those in~\cite{dense}, this conjecture (if true) would completely characterise $\delta_\chi(H,p)$ in this range. 

\subsection*{Organisation of the paper}

We begin, in Section~\ref{sec:rob}, by establishing that $G(n,p)$ contains a
small subgraph with high girth, chromatic number and expansion (see
Proposition~\ref{prop:rob}). This result will be used in the lower bound
constructions for Theorems~\ref{thm:classhigh}, \ref{thm:K3}, and
\ref{thm:C5}.  In Section~\ref{sec:classhigh} we prove
Theorem~\ref{thm:classhigh} and construct an infinite family of $4$-chromatic
graphs~$H$ with $m_2(H)<2$ (see Proposition~\ref{prop:inf42}). In
Section~\ref{sec:lower} we provide the lower bound constructions for
Theorems~\ref{thm:K3} and~\ref{thm:C5}. We remark that some of these
constructions are given in a more general framework and reused
in~\cite{dense}. In Section~\ref{sec:upper} we prove the remaining upper
bounds that imply Theorems~\ref{thm:K3}, \ref{thm:C5},
and~\ref{thm:Clong}. We conclude with some open problems in
Section~\ref{sec:open}.

We remark that we often omit floors and ceilings whenever this does not
affect the argument.

\section{Small subgraphs of $G(n,p)$ with large girth and good expansion}\label{sec:rob}

In this section we will prove the following proposition, which will be a key tool in the proof of Theorem~\ref{thm:classhigh}, and in the lower bound constructions in Theorems~\ref{thm:K3} and~\ref{thm:C5}.

\begin{prop}\label{prop:rob}
For every $k \in \N$ and $\eps > 0$, and every function $n^{-1/2} \ll p = o(1)$, the following holds with high probability: there exists a subgraph $G \subset G(n,p)$ such that
\[v(G) \le \eps / p\,, \qquad \chi(G) \ge k \qquad \text{and} \qquad \textup{girth}(G) \ge k\,,\]
and moreover, for every pair $A,B \subset V(G)$ of disjoint vertex sets of size at least $\eps |G|$, the graph $G[A,B]$ contains an edge.
\end{prop}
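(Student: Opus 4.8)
The plan is to find the desired subgraph inside a random subset $W$ of the vertices of size roughly $\eps/(2p)$, where the induced graph $G(n,p)[W]$ has average degree $\Theta(\eps)$, i.e. bounded. Specifically, set $m := \lfloor \eps/(2p) \rfloor$ and expose $G(n,p)$ restricted to a uniformly random $m$-set $W$; this induced graph is distributed as $G(m,p)$ with $p = \Theta(m^{-1})$, so it behaves like a sparse Erd\H{o}s--R\'enyi graph $G(m,c/m)$ with $c = c(\eps)$ a large constant. Inside this graph we want simultaneously: (i) large chromatic number, (ii) large girth, and (iii) the bipartite-expansion property that every pair of disjoint sets of size $\ge \eps m$ spans an edge. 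Property (iii) is easy: the number of pairs $(A,B)$ with $|A|,|B| \ge \eps m$ is at most $4^m$, and for each such pair the probability that $G[A,B]$ is empty is $(1-p)^{|A||B|} \le \exp(-p \eps^2 m^2) = \exp(-\Theta(\eps^3 m))$; since we may take $c$ (hence the implied constant) as large as we like in terms of $\eps$, a union bound kills all these events. The subtlety is that (i) and (ii) are not simultaneously typical in $G(m, c/m)$ — a sparse random graph has chromatic number only $\Theta(c/\log c)$, which we can make large, but it also contains many short cycles.

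The standard fix, going back to Erd\H{o}s's original argument, is the \emph{deletion method}: take $G \sim G(m,c/m)$ with $c$ chosen large enough that $\chi(G) \ge 2k$ with high probability (using e.g. the fact that $G(m,c/m)$ has independence number $O((m\log c)/c)$ whp, so $\chi \ge c/(C\log c) \ge 2k$ for large $c$), then delete one vertex from every cycle of length $< k$. The expected number of short cycles is $\sum_{i=3}^{k-1} \frac{(m)_i}{2i}(c/m)^i = O(c^{k})$, a constant, so whp we delete only $O(1)$ vertices, leaving a subgraph $G'$ with girth $\ge k$, with $v(G') \ge m - O(1) \le \eps/p$, and still $\chi(G') \ge 2k - O(1) \ge k$ after removing those vertices (deleting $t$ vertices drops the chromatic number by at most $t$). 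One must check that property (iii) survives the deletion: since we delete only $O(1)$ vertices, any two disjoint sets of size $\ge \eps v(G') $ in $G'$ contain disjoint sets of size $\ge \eps v(G') \ge \eps m / 2 \ge \eps' m$ in the original $G$ (adjusting $\eps$ by a factor of $2$ at the outset), so the edge between them persists. Alternatively one can build the expansion property into the union bound for $G'$ directly.

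The main obstacle — and the reason some care is needed — is coordinating the constants and the dependencies: the deletion argument as usually stated controls the \emph{expected} number of short cycles and concludes via Markov that \emph{some} outcome is good, whereas here we need a statement that holds with high probability (over the choice of $G(n,p)$, and hence over $G(m,p)$) simultaneously with the expansion property. To get this, I would instead condition on the high-probability event that $G(m,p)$ has at most (say) $\log m$ short cycles — which holds whp since the number of short cycles is a nonnegative integer random variable with bounded mean, so by Markov it exceeds $\log m$ with probability $O(1/\log m) = o(1)$ — and also on $\chi \ge 2k$ and on property (iii), both of which are whp events. On this event we delete at most $\log m$ vertices, which is still $o(m)$, so all three conclusions hold. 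The only genuinely delicate point is the lower bound $\chi(G(m,c/m)) \ge 2k$ whp for suitable constant $c = c(k)$; this is classical (it follows from the concentration of the independence number of sparse random graphs, e.g. via Azuma's inequality or the second moment method), and I would simply cite it or include the short first-moment/martingale argument. Everything else is a routine union bound and the elementary observation that $m \to \infty$ because $p = o(1)$.
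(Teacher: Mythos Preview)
There is a fundamental gap. You set $m = \lfloor \eps/(2p) \rfloor$, so the induced graph on a random $m$-set $W$ is distributed as $G(m,p)$ with $pm \approx \eps/2$: this \emph{is} your constant $c$, and it is not at your disposal. Both $k$ and $\eps$ are handed to you in the hypothesis, and the content of the proposition is precisely the case of $\eps$ small. With $c = \eps/2 < 1$ the graph $G(m,c/m)$ is subcritical: with high probability every component is a tree or unicyclic, so $\chi \le 3$; and your union bound for the expansion property needs $\exp(-\eps^3 m/2)$ to beat $4^m$, which requires $\eps^3 > 2\log 4$. Thus for small $\eps$ a \emph{typical} $m$-set is far too sparse to contain the desired subgraph. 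This is exactly the obstacle the paper flags immediately after the statement: the result would be easy with $v(G) \le K/p$ for some large $K = K(k)$, but $\eps/p$ vertices are genuinely too few for the naive argument.

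The paper's remedy is to hunt for an \emph{atypically dense} $m$-set. With $m = \eps/p$, let $E_A$ be the event that $G(n,p)[A]$ has exactly $k^2 m$ edges (average degree $2k^2$, now large enough for the Erd\H{o}s deletion argument). Each $E_A$ is individually very unlikely, but a second-moment calculation (Lemma~\ref{lem:var}, which is where the hypothesis $p \gg n^{-1/2}$ enters) shows that the count $X = \sum_{|A|=m} \mathbbm{1}[E_A]$ satisfies $\Var(X) = o(\Ex[X]^2)$. Conditioned on $E_A$, the graph inside $A$ is uniform with $k^2 m$ edges, so the deletion argument (Lemma~\ref{lemma:Erdos}) produces the desired $G$ with high probability. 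The passage from ``$\Pr(F \mid E_A) = 1 - o(1)$ for each $A$'' together with ``with high probability some $E_A$ holds'' to ``with high probability $F$ holds'' is not automatic --- the paper explains the pitfall --- and is handled by the general Lemma~\ref{lem:EF}, which again uses the variance bound on $X$.
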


Observe that this result would be easy if $v(G)\le\eps/p$ were replaced with $v(G)\le K/p$ for some large constant $K$, because $G(K/p,p)$ with high probability has the desired properties. So the difficulty is to obtain a much smaller subgraph with these properties. Similarly, a random graph on $m:=\eps/p$ vertices with $k^2m$ edges has a subgraph with the desired properties (see Lemma~\ref{lemma:Erdos}). This is denser than our $G(n,p)$ but
we will show that with high probability some set of $m$ vertices contains exactly $k^2 m$ edges (see Lemma~\ref{lem:var}).

However, this is not enough to conclude that $G(n,p)$ contains a subgraph with the desired properties. Indeed, writing $E_A$ for the event that $A$ contains exactly $k^2 |A|$ edges, and $F$ for the event that the desired subgraph $G$ exists, we just argued that 
$$\Pr\bigg( \bigcup_{|A| = m} E_A \bigg) = 1 - o(1) \qquad \text{and} \qquad \Pr\big( F \,|\, E_A \big) = 1 - o(1) \quad \text{for every $|A| = m$.}$$
Now suppose that each $E_A$ were the disjoint union of $F$ and $E'_A$, where the $E'_A$ are disjoint events, with $\Pr(E'_A) \ll \Pr(F)$ for each $i$, but $\Pr(F) \ll \sum_{|A| = m} \Pr(E'_A)$. In other words, each event $F|E_A$ is very likely for the same reason. If this were the case then $F$ would be a very unlikely event.

In this scenario, however, the random variable $X$ counting the number of $E_A$ which occur is not concentrated near its expectation, that is, $\Var(X)$ is not small compared to $\Ex[X]^2$. We will show in Lemma~\ref{lem:var} that in reality that is not the case. 
This combined with the following lemma suffices to prove Proposition~\ref{prop:rob}. 
We formulate this lemma for a general setup and believe it is likely to have other applications.

\begin{lemma}\label{lem:EF}
Let $E_1,...,E_t$ and $F$ be events with non-zero probability, and let $X = \sum_{j=1}^t \mathbbm{1}{[E_j]}$. Suppose that for some $\delta\ge0$ we have $\Var(X) \le \delta \Ex[X]^2$ and $\Pr\big( F \,|\, E_j \big) \ge 1 - \delta$ for each $j\in[t]$. Then $\Pr(F) \ge 1-6\delta$.
\end{lemma}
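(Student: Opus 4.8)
The plan is to derive the conclusion from the two hypotheses using a single first-moment identity together with Chebyshev's inequality; no heavy machinery is needed. We may assume $\delta < 1/6$, since otherwise $1 - 6\delta \le 0$ and there is nothing to prove; also $\Ex[X] = \sum_{j=1}^t \Pr(E_j) > 0$ because each $E_j$ has positive probability.

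The first step is to repackage the conditional-probability hypothesis. For each $j$ the bound $\Pr(F \mid E_j) \ge 1-\delta$ is equivalent to $\Pr(\overline{F} \cap E_j) \le \delta \Pr(E_j)$, and summing over $j\in[t]$ gives
\[
\Ex\big[ \mathbbm{1}{[\overline{F}]}\, X \big] \;=\; \sum_{j=1}^t \Pr\big( \overline{F} \cap E_j \big) \;\le\; \delta \sum_{j=1}^t \Pr(E_j) \;=\; \delta\, \Ex[X].
\]
So it is the \emph{mixed} quantity $\Ex[\mathbbm{1}{[\overline{F}]}X]$, rather than $\Pr(\overline{F})$ itself, that is directly controlled by the hypothesis; the concentration of $X$ is what will let us pass from one to the other.

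The second step applies Chebyshev's inequality to the variance bound: writing $A := \{ X \ge \tfrac12\Ex[X] \}$, we get
\[
\Pr\big( \overline{A} \big) \;\le\; \Pr\Big( |X - \Ex[X]| \ge \tfrac12\Ex[X] \Big) \;\le\; \frac{4\,\Var(X)}{\Ex[X]^2} \;\le\; 4\delta.
\]
Finally, on the event $\overline{F} \cap A$ the non-negative random variable $\mathbbm{1}{[\overline{F}]}\,X$ is at least $\tfrac12\Ex[X]$, so $\tfrac12\Ex[X]\cdot\Pr(\overline{F}\cap A) \le \Ex[\mathbbm{1}{[\overline{F}]}X] \le \delta\Ex[X]$, i.e.\ $\Pr(\overline{F}\cap A)\le 2\delta$. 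Adding the two estimates yields $\Pr(\overline{F}) \le \Pr(\overline{F}\cap A) + \Pr(\overline{A}) \le 6\delta$, which is the claim.

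There is no genuine obstacle here; the only choices to make are the threshold in the definition of $A$ (taking $\tfrac12\Ex[X]$ balances the two error terms up to constants — any fixed fraction of $\Ex[X]$ would work) and the realisation that the conditional hypothesis should be summed against $X$ to produce $\Ex[\mathbbm{1}{[\overline{F}]}X]$ rather than applied pointwise. The constant $6$ is not optimised, but it is more than adequate for the application to Proposition~\ref{prop:rob}.
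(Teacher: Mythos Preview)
Your proof is correct and follows essentially the same approach as the paper's: Chebyshev to control $\Pr(X<\tfrac12\Ex[X])$, then the conditional hypothesis to bound $\Pr(\overline{F}\cap\{X\ge\tfrac12\Ex[X]\})$. The only cosmetic difference is that you sum the inequalities $\Pr(\overline{F}\cap E_j)\le\delta\Pr(E_j)$ directly to obtain $\Ex[\mathbbm{1}[\overline{F}]X]\le\delta\Ex[X]$, whereas the paper argues by contradiction and pigeonhole to locate a single $j$ violating the hypothesis; your version is arguably the cleaner packaging of the same computation.
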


\begin{proof}
By Chebyshev's inequality, we have $\Pr\big(X\le\tfrac12 \Ex[X]\big)\le4\delta$. Therefore, if we define
$$Q \, := \, F^c \cap \big\{ X >\tfrac12 \Ex[X] \big\},$$
it will suffice to prove that $\Pr(Q) \le 2\delta$. 

So suppose that to the contrary we have $\Pr(Q) > 2\delta$. Observe that
\[\sum_{j=1}^t \Pr( E_j \cap Q ) \,=\, \sum_{\omega\in Q} \Pr(\omega) \sum_{j=1}^t \mathbbm{1}{[\omega \in E_j]} \, > \,   \Pr(Q)\cdot\tfrac12 \Ex[X]  \, > \, \delta \cdot \sum_{j=1}^t \Pr( E_j )\,,\]
where the first inequality follows from the definition of $Q$. By the pigeonhole principle, it follows that $\Pr( E_j \cap Q ) > \delta \cdot \Pr(E_j)$ for some $j \in [t]$. But then
\[\delta \,<\, \frac{\Pr( E_j \cap Q )}{ \Pr(E_j)} \,=\, \Pr\big( Q \,|\, E_j \big) \, \le \, \Pr\big( F^c \,|\, E_j \big)\,,\]
which contradicts our assumption $\Pr\big(F\,|\,E_j\big)\ge 1-\delta$.
\end{proof}

In order to deduce Proposition~\ref{prop:rob} from Lemma~\ref{lem:EF}, we need to bound the variance of the number of $m$-sets $A$ with exactly $k^2 m$ edges of $G(n,p)$, and show that such a set is likely to contain a graph $G$ as in the statement of the proposition. We begin with the latter claim, which follows from a standard argument, originally due to Erd\H{o}s~\cite{Erd59}.

\begin{lemma}\label{lemma:Erdos} For all sufficiently large $k$ the following holds.
Let $H$ be chosen uniformly from the set of graphs with $m$ vertices and $k^2 m$ edges. Then, with high probability as $m\to\infty$, there exists a spanning subgraph $G \subset H$ with 
$$\chi(G) \ge k \qquad \text{and} \qquad \textup{girth}(G) \ge k,$$
and moreover, for every pair $A,B \subset V(H)$ of disjoint vertex sets with $|A|,|B| \ge m / 4k$, the graph $G[A,B]$ contains an edge.
\end{lemma}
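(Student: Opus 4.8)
The plan is to run the classical Erd\H{o}s deletion argument, but in a form that delivers a \emph{spanning} subgraph together with the explicit expansion property, and to transfer from the binomial model at the very end. Write $N := k^2 m$, and let $\mathcal P$ denote the event that the host graph $H$ has a spanning subgraph $G$ with $\girth(G) \ge k$ and with $G[A,B]$ non-empty for every pair of disjoint $A, B \subset V(H)$ with $|A|, |B| \ge m/4k$. Adding an edge to $H$ cannot destroy any spanning subgraph of $H$, so $\mathcal P$ is monotone increasing in the edge set; consequently it suffices to prove that $G(m,q) \in \mathcal P$ with high probability when $q := k^2/m$. Indeed, $\Ex[e(G(m,q))] = \tfrac12 k^2 (m-1) < N$, so $e(G(m,q)) \le N$ w.h.p., and then the standard comparison between the two models gives $\Pr[H_{m,N} \in \mathcal P] \ge \Pr[G(m,q) \in \mathcal P,\ e(G(m,q)) \le N] \ge \Pr[G(m,q) \in \mathcal P] - o(1)$, using that $N' \mapsto \Pr[H_{m,N'} \in \mathcal P]$ is non-decreasing.

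Set $\Gamma := G(m,q)$ with $q = k^2/m$. Two estimates are needed. \emph{(i) Few short cycles.} The expected number of cycles of length $\ell$ in $\Gamma$ is at most $\tfrac{1}{2\ell} m^\ell q^\ell \le k^{2\ell}$, so the expected number of cycles of length in $\{3, \dots, k-1\}$ is at most $k^{2k}$; by Markov's inequality, w.h.p.\ $\Gamma$ contains at most $k^{2k}\sqrt m = o(m)$ such cycles. \emph{(ii) Many crossing edges between all small pairs.} Fix disjoint $A, B \subset [m]$ with $|A| = |B| = s := \lceil m/4k \rceil$. Then $e_\Gamma(A,B) \sim \Bin(s^2, q)$ has mean $\mu = s^2 q \ge m/16$, so a Chernoff bound gives $\Pr[e_\Gamma(A,B) < m/32] \le e^{-m/128}$. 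There are at most $\binom ms^2 \le (4ek)^{2s} = \exp\big(\tfrac{m}{2k}\ln(4ek) + O(1)\big)$ such pairs, and $\tfrac{1}{2k}\ln(4ek) < \tfrac1{128}$ once $k$ is large enough, so a union bound shows that w.h.p.\ $e_\Gamma(A,B) \ge m/32$ for every such pair---hence, by monotonicity in the set sizes, for every disjoint $A, B$ with $|A|, |B| \ge m/4k$.

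Condition on the two high-probability events above. Let $G$ be obtained from $\Gamma$ by deleting one edge from each cycle of length less than $k$; this deletes $o(m)$ edges and leaves $G$ spanning. Every cycle of $G$ is a cycle of $\Gamma$, so a cycle of $G$ of length $< k$ would be one of the cycles enumerated in (i) and would be missing its deleted edge---impossible; thus $\girth(G) \ge k$. For disjoint $A, B$ with $|A|, |B| \ge m/4k$, restricting to $s$-element subsets and invoking (ii) yields $e_G(A,B) \ge m/32 - o(m) > 0$ for large $m$, so $G[A,B]$ contains an edge. Finally $\chi(G) \ge k$: an independent set of $G$ of size at least $m/2k$ would split into two disjoint sets of size at least $m/4k$ spanning no edge of $G$, contradicting the last sentence; hence $\alpha(G) < m/k$ and $\chi(G) \ge m/\alpha(G) > k$. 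This establishes $\Gamma \in \mathcal P$ w.h.p., and the reduction from the first paragraph completes the proof.

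The one delicate point---and the only place where ``$k$ sufficiently large'' is used---is the union bound in step (ii). It closes precisely because the two competing exponential rates lie on opposite sides of a knife-edge: the expected number $s^2 q$ of crossing edges is $\Theta(m)$ \emph{independently of $k$} (the powers of $k$ cancel, as $s \asymp m/4k$ while $q \asymp k^2/m$), whereas the number of relevant pairs is only $\exp\big(\Theta(m\log k / k)\big) = \exp(o(m))$. This is also what forces the threshold $m/4k$ in the statement: with a threshold of the form $\eps m$ the number of pairs would be $2^{\Theta(m)}$ and the union bound would break down. Every remaining estimate is routine.
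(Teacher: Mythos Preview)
Your argument is correct and follows essentially the same route as the paper: the classical Erd\H{o}s deletion argument, bounding short cycles by a first-moment estimate and crossing edges by a Chernoff-plus-union-bound, then deleting one edge per short cycle. The only difference is cosmetic: the paper works directly in the uniform model $G(m,k^2m)$ and uses the hypergeometric tail bound for $e(H[A,B])$, whereas you pass to $G(m,k^2/m)$, use the binomial Chernoff bound, and transfer back via monotonicity of $\mathcal P$; both executions yield the same quantitative picture (mean $\Theta(m)$ crossing edges versus $\exp\big(O(m\log k/k)\big)$ pairs).
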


\begin{proof}
Given $A$ and $B$ vertex disjoint sets of size exactly $m/4k$, since $e\big( H[A,B] \big)$ is a hypergeometrically distributed random variable with expected value 
$$|A| \cdot |B| \cdot k^2 m \cdot \binom{m}{2}^{-1} \ge \, \frac{m}{16}\,,$$
we have (see for example~\cite[Theorem~2.10]{JLRbook})
\[\Pr\big(e(H[A,B])\le\tfrac{m}{32}\big)\le e^{-m/200}\,.\]
By the union bound, the probability that there exist $A$ and $B$ in $H$ such that $e\big(H[A,B]\big)\le m/32$ is at most
\[\binom{m}{m/4k}^2e^{-m/200}\le e^{-m/300}\]
where the inequality holds since $k$ is sufficiently large.

Now let $Z$ denote the number of cycles in $H$ of length at most $k$. Then 
\[\Ex[Z] \, \le \, \sum_{\ell = 3}^k m^\ell \binom{\binom{m}{2} - \ell}{k^2 m - \ell} \binom{\binom{m}{2}}{k^2 m}^{-1} \le \, \sum_{\ell = 3}^k m^\ell \bigg( \frac{2k^2}{m-1} \bigg)^\ell \le \, (2k^2)^{k+1}\,.\]
By Markov's inequality, it follows that with high probability we have $Z\le\tfrac{m}{64}$, and in particular with high probability $H$ is such that the following holds. By removing one edge from each cycle of length at most $k$, we obtain a subgraph $G$ with $\textup{girth}(G) \ge k$ and such that for every pair $A,B \subset V(H)$ of disjoint vertex sets with $|A|,|B| \ge m / 4k$, the graph $G[A,B]$ contains an edge. Then $G$ has no independent set of size $m/k$, so $\chi(G)\ge k$.
\end{proof}

\begin{lemma}\label{lem:var} 
Let $0 < \eps < 1$ and $k \in \N$, and suppose that $p = p(n)$ satisfies $n^{-1/2}\ll p \ll 1$. Let $X$ be the random variable that counts the number of sets $A\subset [n]$ with $|A| = m = \eps / p$ such that $e\big(G(n,p)[A]\big)=k^2 m$. Then
$$\Var( X ) \, \le \, \frac{Cm^2}{n} \cdot \Ex[X]^2,$$
where $C = e^{O( k^4 / \eps)}$. 
\end{lemma}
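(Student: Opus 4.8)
The plan is to compute $\Ex[X]$ and $\Ex[X^2]$ directly and compare them. For the first moment, $\Ex[X] = \binom{n}{m} \cdot q$, where $q = q(m,p) := \Pr\big( e(G(m,p)) = k^2 m \big)$ is the probability that a fixed $m$-set spans exactly $k^2m$ edges; note $e(G(m,p)) \sim \Bin\big(\binom{m}{2},p\big)$, which has mean $\approx \tfrac12 m^2 p = \tfrac12 \eps m \gg k^2 m$ only if $\eps$ is large — in general $k^2 m$ sits in the body or lower tail of this binomial, and standard estimates give $q = e^{-O(k^4/\eps)}$, a (small) constant depending only on $k,\eps$. For the second moment, I would write $\Ex[X^2] = \sum_{A,B} \Pr\big( e(G(n,p)[A]) = k^2m \text{ and } e(G(n,p)[B]) = k^2 m \big)$ and split the sum according to $i := |A \cap B|$. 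The key point is that for fixed intersection size $i$, the events $\{e(G[A]) = k^2m\}$ and $\{e(G[B]) = k^2m\}$ are only weakly correlated, because they share only the $\binom{i}{2}$ edges inside $A \cap B$, which is a negligible fraction of $\binom{m}{2}$ when $i$ is not too large; and the number of pairs with large $i$ is itself tiny.

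Concretely, I would carry out the following steps. \emph{Step 1:} Establish $\Ex[X] = \binom{n}{m} q$ with $q \ge e^{-O(k^4/\eps)}$ via a local central limit / Stirling estimate for the binomial $\Bin\big(\binom{m}{2},p\big)$ at the point $k^2m$ (here $m \to \infty$ and $mp \to 0$, so $\binom{m}{2}p = \tfrac12\eps m(1+o(1))$, and $k^2 m$ is a constant multiple of the mean); crucially $q$ does not depend on $n$. \emph{Step 2:} For the pair-sum, count ordered pairs $(A,B)$ with $|A \cap B| = i$: there are $\binom{n}{m}\binom{m}{i}\binom{n-m}{m-i}$ of them. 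Bound the conditional correlation: conditioning on the edges inside $A\cap B$, the residual events for $A$ and $B$ concern disjoint edge sets (the edges of $A$ meeting $A\setminus B$, and those of $B$ meeting $B\setminus A$) and so become independent; each residual probability is at most $q \cdot e^{O(i^2 p + \text{(small corrections)})}$, i.e. within a factor $e^{O(i^2 p)} \le e^{O(i^2/m \cdot \eps)}$ of $q$ — here one uses that shifting a binomial's target by $\binom{i}{2} = O(i^2)$ and removing $\binom{i}{2}$ trials changes the point probability by at most such a factor. \emph{Step 3:} Assemble:
\[
\Ex[X^2] \,\le\, \sum_{i=0}^{m} \binom{n}{m}\binom{m}{i}\binom{n-m}{m-i} \cdot q^2 \cdot e^{O(i^2 \eps/m)}\,,
\]
and compare with $\Ex[X]^2 = \binom{n}{m}^2 q^2$. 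The ratio of the $i$-th term to $\Ex[X]^2$ is $\binom{m}{i}\binom{n-m}{m-i}\big/\binom{n}{m} \cdot e^{O(i^2\eps/m)} \le \big(m^2/n\big)^i \cdot e^{O(i^2\eps/m)} \cdot 2^{?}$, roughly $(m^2/n)^i$ times a mild factor. \emph{Step 4:} Sum the geometric-type series in $i$: since $m = \eps/p$ and $p \gg n^{-1/2}$ we have $m^2/n \ll 1$, the $i=0$ term gives exactly $\Ex[X]^2$, and the $i \ge 1$ terms contribute $O(m^2/n) \cdot \Ex[X]^2$ times a constant $e^{O(k^4/\eps)}$ absorbing all the slack (the $e^{O(i^2\eps/m)}$ factor is bounded once $i \le m$, but one must check the series still converges — for this it helps that the $i$-th term also carries a combinatorial decay $\binom{m}{i}\binom{n-m}{m-i}/\binom{n}{m}$, which for $i$ up to $\sqrt m$ behaves like $(m^2/n)^i$ and for larger $i$ is even smaller). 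Hence $\Var(X) = \Ex[X^2] - \Ex[X]^2 \le Cm^2\Ex[X]^2/n$ with $C = e^{O(k^4/\eps)}$.

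The main obstacle I anticipate is \emph{Step 2}: getting a clean bound on the conditional point-probability of a binomial after translating its target by $O(i^2)$ and deleting $O(i^2)$ trials, uniformly over all $i \le m$, and making sure the resulting error factor $e^{O(i^2\eps/m)}$ is genuinely harmless when summed against the combinatorial weights (rather than blowing up the series for $i$ near $m$). A secondary technical point is the lower bound $q \ge e^{-O(k^4/\eps)}$ in Step 1, which requires a careful local-limit estimate for a binomial evaluated away from its mean but still within a constant factor of it; this is routine but must be done with explicit constants to match the claimed form of $C$. Everything else — the first-moment identity, the pair-counting, and the final geometric summation using $m^2/n = o(1)$ — is bookkeeping.
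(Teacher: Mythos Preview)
Your approach is the paper's: decompose $\Ex[X^2]$ by $\ell=|A\cap B|$ and $t=e(A\cap B)$, compare term by term to $\Ex[X]^2$, and sum. Two corrections. First, the lower bound $q\ge e^{-O(k^4/\eps)}$ is unnecessary, since the point probabilities cancel in the ratio $\Ex[X^2]/\Ex[X]^2$; the paper never estimates $q$ on its own. Second, your Step~2 claim that each conditional residual probability $\Pr\big(\Bin\big(\binom{m}{2}-\binom{\ell}{2},p\big)=s-t\big)$ lies within a factor $e^{O(\ell^2 p)}$ of $q$ is false for large $t$. The target $s=k^2m$ sits far in the \emph{upper} tail of $\Bin\big(\binom{m}{2},p\big)$ (whose mean is $\approx\tfrac12\eps m$), so lowering the target by $t$ can raise the point probability by a factor as large as $(2k^2/\eps)^t$, not merely $e^{O(\ell^2 p)}$. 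The paper resolves your anticipated obstacle by keeping the sum over $t$ explicit and bounding
\[
\sum_{t}\binom{\binom{\ell}{2}}{t}\Big(\frac{4s}{m^2}\Big)^{2t}p^{-t}(1-p)^{-\binom{\ell}{2}+t}\ \le\ e^{O(\ell)}\exp\Big(\frac{8k^4\ell^2}{m\eps}\Big)
\]
via $(x/t)^t\le e^{x/e}$; note the exponent carries $k^4/\eps$, not $\eps$, and this is precisely the source of the constant $C=e^{O(k^4/\eps)}$. After the crude step $\ell\le m$ this becomes $e^{O(k^4\ell/\eps)}$, and summing against the combinatorial weight $\big(O(m^2/(n\ell))\big)^\ell$ yields a geometric series with ratio $e^{O(k^4/\eps)}m^2/n=o(1)$, giving the lemma.
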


\begin{proof}
Let $s=k^2 m$. Observe that the expectation of $X$ is
\begin{equation}\label{eq:varX}
\Ex[X]=\binom{n}{m} \binom{\binom{m}{2}}{s} p^s (1 - p)^{\binom{m}{2} - s}\,.
\end{equation}

We now calculate $\Ex[X^2]$. We need to bound the expected number of pairs $(A,B)$ of sets of size $m$, each with exactly $s = k^2 m$ edges. Let $\ell = |A \cap B|$ denote the size of the intersection of these sets, and let $t = e(A \cap B)$ denote the number of edges contained in both sets. We have
\[\Ex\big[ X^2 \big] \, = \, \sum_{\ell = 0}^m \binom{n}{\ell}\binom{n-\ell}{m-\ell}\binom{n-m}{m-\ell} \sum_{t = 0}^{\binom{\ell}{2}} \binom{\binom{\ell}{2} }{t} \binom{\binom{m}{2} - \binom{\ell}{2}}{s - t}^2 p^{2s - t} (1 - p)^{2\binom{m}{2} - \binom{\ell}{ 2} - 2s + t}\,.\]
Note that $\binom{n}{m-\ell}\le\big(\frac{m}{n-m}\big)^\ell\binom{n}{m}$, since $2m \le n$, and that $\binom{\binom{m}{2}-\binom{\ell}{2}}{s-t} \le
\binom{\binom{m}{2}}{s-t}\le\big(\frac{s}{\binom{m}{2}-s}\big)^t\binom{\binom{m}{2}}{s}$.
Thus
$$\binom{n-\ell}{m-\ell}\binom{n-m}{m-\ell}\le \binom{n }{ m - \ell}^2 \le \left( \frac{2m}{n} \right)^{2\ell} \binom{n}{ m}^2 \quad \text{and} \quad \binom{\binom{m}{ 2} - \binom{\ell }{ 2} }{ s - t} \le \left( \frac{4s}{m^2} \right)^t \binom{\binom{m }{ 2} }{ s}\,,$$
and hence
\begin{align*} 
\Ex\big[ X^2 \big] & \, \le \, \sum_{\ell = 0}^m \binom{n }{ \ell} \left( \frac{2m}{n} \right)^{2\ell}\binom{n }{ m}^2 \sum_{t = 0}^{\binom{\ell }{ 2}} \binom{\binom{\ell }{2}}{ t} \left( \frac{4s}{m^2} \right)^{2t}\binom{\binom{m }{ 2} }{ s}^2 p^{2s - t} (1 - p)^{2\binom{m}{ 2} - \binom{\ell}{ 2} - 2s + t}\\
& \, \eqByRef{eq:varX} \, \Ex[X]^2\sum_{\ell = 0}^m \binom{n}{ \ell} \left( \frac{2m}{n} \right)^{2\ell} \sum_{t = 0}^{\binom{\ell }{ 2}} \binom{\binom{\ell }{ 2}}{ t} \left( \frac{4s}{m^2} \right)^{2t} p^{ - t} (1 - p)^{ - \binom{\ell }{ 2} + t}\,.
\end{align*}
Substituting $\binom{n}{\ell} \le \big( \frac{en}{\ell} \big)^\ell$ and $\binom{\binom{\ell}{2}}{t} \le\big( \frac{e\ell^2}{2t} \big)^t$, and recalling that $p\ell \le pm = \eps$, and thus $(1-p)^{-\ell}\le e^{2p\ell}=O(1)$, we have
\begin{align*}
\Ex[X^2] & \, \le \, \Ex[X]^2\sum_{\ell = 0}^m \bigg(\frac{en}{\ell} \cdot \frac{4m^2}{n^2} \bigg)^{\ell} \sum_{t = 0}^{\binom{\ell }{ 2}} \bigg( \frac{e\ell^2}{2t} \cdot \frac{16s^2}{m^4} \bigg)^{t} p^{ - t} (1 - p)^{ - \binom{\ell }{ 2} + t}\\
& \, \le \, \Ex[X]^2 \sum_{\ell = 0}^m e^{O(\ell)} \bigg(\frac{m^2}{n\ell} \bigg)^{\ell} \sum_{t = 0}^{\binom{\ell}{ 2}} \bigg( \frac{8es^2\ell^2}{m^4tp} \bigg)^{t}\,.
\end{align*}
Since $(x / t)^t$ is maximised when $t = x / e$, and therefore has maximum $e^{x/e}$, it follows that
\[
\Ex[X^2] \, \le \, \Ex[X]^2\sum_{\ell = 0}^m e^{O(\ell)} \bigg(\frac{m^2}{n\ell} \bigg)^{\ell} \exp\bigg(\frac{8s^2\ell^2}{m^4 p} \bigg). % I replaced the O(1) with 8 - PA
\]
Now, since $\ell \le m$, $pm = \eps$ and $s = k^2 m$, we obtain
$$\Ex[X^2] \, \le \, \Ex[X]^2\sum_{\ell = 0}^m e^{O(\ell)} \bigg(\frac{m^2}{n\ell} \bigg)^{\ell} \exp\bigg(\frac{8k^4 \ell}{\eps} \bigg) \le \, \Ex[X]^2\sum_{\ell = 0}^m \bigg( \frac{C' m^2}{n\ell} \bigg)^{\ell}$$
for some $C' = e^{O( k^4 / \eps)}$. This last sum is bounded above by a geometric series with ratio $C' m^2/n$, which, since $m = \eps/p$ and by our choice of $p$, is smaller than $\tfrac12$. The sum is therefore bounded above by $1 + 2C'm^2 / n$, as desired.
\end{proof}

We can now easily deduce Proposition~\ref{prop:rob}. 

\begin{proof}[Proof of Proposition~\ref{prop:rob}]
Given $\eps>0$, we can assume $k\ge4/\eps$ is sufficiently large. Set $m =
\eps / p$,
define $E_A$ to be the event that $G(n,p)[A]$ contains exactly $k^2|A|$ edges, and let $F$ denote the event that there exists a subgraph $G \subset G(n,p)$ such that $v(G) \le \eps / p$, $\chi(G) \ge k$ and $\textup{girth}(G) \ge k$, and moreover, for every pair $A,B \subset V(G)$ of disjoint vertex sets of size at least $\eps |G|$, the graph $G[A,B]$ contains an edge. We are required to prove that $\Pr(F) = 1 - o(1)$. 
But this follows from Lemma~\ref{lem:EF} since the conditions of this lemma
are satisfied for some $\delta=o(1)$. Indeed, since $n^{-1/2} \ll p = o(1)$, we have 
$\Var(X)=o\big( \Ex[X]^2\big)$ by Lemma~\ref{lem:var}, where $X = \sum_{|A| = m} E_A$, and by Lemma~\ref{lemma:Erdos} we have $\Pr\big( F \,|\, E_A \big) = 1 - o(1)$.
\end{proof}

\section{Proof of Theorem~\ref{thm:classhigh}}\label{sec:classhigh}

To prove Theorem~\ref{thm:classhigh}, we will use some known results. The first is the so-called sparse random Erd\H{o}s--Stone theorem, which was originally conjectured by Kohayakawa, \L uczak and R\"odl~\cite{KLR}, and proved by Conlon and Gowers~\cite{ConGow} (for strictly balanced graphs $H$) and Schacht~\cite{SchTuran} (in general). 

\begin{theorem}[Conlon and Gowers, Schacht]\label{thm:CGS}
For every graph $H$, every $\gamma > 0$, and every $p \gg n^{-1/m_2(H)}$, the following holds with high probability. For every $H$-free subgraph $G \subset G(n,p)$, we have
$$e(G) \, \le \, \bigg( 1 - \frac{1}{\chi(H)-1} + \gamma \bigg) p\binom{n }{ 2}.$$
In particular, $\delta_\chi(H,p) \le \pi(H)$. 
\end{theorem}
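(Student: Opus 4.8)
The plan is to deduce Theorem~\ref{thm:CGS} from the deterministic Erd\H{o}s--Stone--Simonovits theorem by means of the hypergraph container method; equivalently one could invoke Schacht's transference theorem~\cite{SchTuran} or the Conlon--Gowers framework~\cite{ConGow} more directly, but the container route~\cite{BMS,ST} is the most transparent, so I describe that.

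First I would fix $\gamma > 0$ and record the deterministic input. By Erd\H{o}s--Stone--Simonovits, every $H$-free graph on $N$ vertices has at most $\big(\pi(H) + \tfrac{\gamma}{3}\big)\binom{N}{2}$ edges once $N$ is large; combined with a standard supersaturation argument, this upgrades to the statement that there is $c = c(H,\gamma) > 0$ such that every graph on $n$ vertices with more than $\big(\pi(H)+\tfrac{\gamma}{2}\big)\binom{n}{2}$ edges contains at least $c\,n^{v(H)}$ copies of $H$. Next I would apply the hypergraph container theorem to the $v(H)$-uniform hypergraph $\cH$ on vertex set $E(K_n)$ whose hyperedges are the edge-sets of copies of $H$. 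The co-degree conditions required to run the container theorem translate, through the definition of $m_2(H)$, precisely into the regime $p \gg n^{-1/m_2(H)}$; this produces a family $\cC$ of \emph{containers}, each a graph on $[n]$, such that: (i)~every $H$-free graph on $[n]$ is a subgraph of some $C \in \cC$; (ii)~using the supersaturation bound, every $C \in \cC$ satisfies $e(C) \le \big(\pi(H)+\tfrac{\gamma}{2}\big)\binom{n}{2}$; and (iii)~$\log|\cC|$ is of smaller order than $p n^2$ throughout the range $p \gg n^{-1/m_2(H)}$ (this is the delicate point, and pinning down the sharp threshold here is exactly what the cited papers achieve).

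The probabilistic step is then a routine union bound. For a fixed $C \in \cC$, the number of edges of $G(n,p)$ contained in $C$ is binomially distributed with mean at most $\big(\pi(H)+\tfrac{\gamma}{2}\big)p\binom{n}{2}$, so by a Chernoff bound the probability that it exceeds $\big(\pi(H)+\gamma\big)p\binom{n}{2}$ is at most $\exp(-c'\gamma p n^2)$ for some absolute $c' > 0$. Summing over the $|\cC|$ containers and using (iii), with high probability no container contains more than $\big(\pi(H)+\gamma\big)p\binom{n}{2}$ edges of $G(n,p)$. On this event, property (i) forces every $H$-free $G \subset G(n,p)$ into some $C \in \cC$, whence $e(G) \le e\big(G(n,p)[C]\big) \le \big(\pi(H)+\gamma\big)p\binom{n}{2}$, which is the assertion of the theorem.

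For the ``in particular'' clause, fix $d > \pi(H)$ and choose $\gamma$ with $\pi(H)+\gamma < d$. By the first part, with high probability every $H$-free $G \subset G(n,p)$ has $e(G) \le \big(\pi(H)+\gamma\big)p\binom{n}{2} < \tfrac12 d p n^2$, so no such $G$ can have $\delta(G) \ge dpn$; hence the requirement in Definition~\ref{def:deltachip} is satisfied vacuously (take $C = 1$), giving $\delta_\chi(H,p) \le d$, and letting $d \downarrow \pi(H)$ yields $\delta_\chi(H,p) \le \pi(H)$. The only genuinely hard ingredient is the container statement of the previous paragraph --- the existence of a container family with the stated edge bound and with $\log|\cC|$ negligible against $pn^2$ down to $p \gg n^{-1/m_2(H)}$ --- which is precisely the content of the cited breakthroughs and which I would quote as a black box rather than reprove; everything else is supersaturation together with standard concentration.
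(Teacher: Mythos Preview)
The paper does not give a proof of this theorem at all: it is quoted as a black box, attributed to Conlon--Gowers~\cite{ConGow} and Schacht~\cite{SchTuran}. The only part the paper addresses is the ``in particular'' clause, and that only implicitly, via the remark following Definition~\ref{def:deltachip} that if with high probability no $H$-free spanning subgraph of $G(n,p)$ has average degree exceeding $dpn$, then vacuously $\delta_\chi(H,p)\le d$. Your deduction of the ``in particular'' clause is exactly this argument.

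Your container-based sketch for the main assertion is correct and is indeed one of the now-standard proofs (this is precisely the route of~\cite{BMS,ST}, which the paper itself cites in the introduction). So you have supplied more than the paper does: the paper simply invokes the result, while you outline an honest proof. The only caveat is that you explicitly flag the container output --- a family with $\log|\cC|=o(pn^2)$ down to $p\gg n^{-1/m_2(H)}$ and with each container having at most $(\pi(H)+\gamma/2)\binom{n}{2}$ edges --- as the hard ingredient to be quoted; that is entirely appropriate, and matches how the paper treats the whole theorem.
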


The bound on~$p$ in Theorem~\ref{thm:CGS} is sharp, as is shown by the following proposition.

\begin{prop}\label{prop:verysmallp}
For every graph $H$, every $\gamma > 0$, and every $\tfrac{\log n}{n} \ll p \ll n^{-1/m_2(H)}$, the following holds with high probability. There exists an $H$-free spanning subgraph $G \subset G(n,p)$ with
\[\delta(G) \ge \big( 1 - 2\gamma \big) pn \qquad \text{and} \qquad \chi(G)\ge\tfrac12\gamma^{-1/2}\,.\]
In particular, $\delta_\chi(H,p) = 1$.
\end{prop}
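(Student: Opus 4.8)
The plan is to destroy every copy of $H$ in $G(n,p)$ by deleting a small, well‑spread set of edges, keeping the minimum degree close to $pn$, having first set aside a bounded‑size vertex set carrying a subgraph of large chromatic number that survives the deletion. I may assume $H$ contains a cycle, since if $H$ is a forest then $m_2(H)\le1$ and the range $\tfrac{\log n}{n}\ll p\ll n^{-1/m_2(H)}$ is empty. The first decision is which copies to destroy: let $F\subseteq H$ be a minimal subgraph with $m_2(F)=m_2(H)$. Then $F$ is $2$‑balanced, contains a cycle, has no isolated vertex, and (being a subgraph of $H$) every $F$‑free graph is $H$‑free. Here $2$‑balancedness is essential: it gives $m_2(F)=\tfrac{e(F)-1}{v(F)-2}$, so $p\ll n^{-1/m_2(H)}=n^{-1/m_2(F)}$ forces $n^{v(F)-2}p^{e(F)-1}\to0$, whence the expected number of copies of $F$ through a fixed vertex, being at most $v(F)\,n^{v(F)-1}p^{e(F)}=v(F)\,(pn)\cdot n^{v(F)-2}p^{e(F)-1}$, is $o(pn)$. (Destroying shortest cycles instead would be too crude — for $H=K_4$, for instance, a vertex can lie in far more than $pn$ triangles — which is exactly why a $2$‑balanced densest subgraph is the right choice.) Finally put $k:=\max\{\lceil\tfrac12\gamma^{-1/2}\rceil,k_0\}$, where $k_0=k_0(H)$ is a sufficiently large constant with $k_0\ge v(H)+1$.

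Next I would establish three high‑probability properties of $G:=G(n,p)$. (i) Since $p\gg\tfrac{\log n}{n}$, $\delta(G)\ge(1-\gamma)pn$. (ii) With $m:=\lceil 4k^2/p\rceil$ (so $m\to\infty$ and $m\le n/2$) and $U:=[m]$, the graph $G[U]$ is distributed as $G(m,p)$ with $mp=\Theta(k^2)$; running the argument of Erd\H{o}s from the proof of Lemma~\ref{lemma:Erdos} directly on $G(m,p)$ — between any two disjoint sets of size $m/4k$ the number of edges is binomial with mean $m/4$, and there are only $O_k(1)$ cycles of length at most $k$ — gives whp a subgraph $J\subseteq G[U]$ with $\chi(J)\ge k$ and $\girth(J)\ge k\ge v(H)+1$; in particular $J$ is $F$‑free, and also $\Delta(G[U])=O(\log n)=o(pn)$. (iii) Every vertex of $G$ lies in at most $\tfrac12\gamma pn$ copies of $F$.

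Property (iii) is the step I expect to be the main obstacle. The number $X_v$ of copies of $F$ through $v$ has $\Ex[X_v]=o(pn)$ by the computation above, but this is far weaker than the $o(1/n)$ that a direct union bound over the $n$ vertices would require, and when $p$ is close to $n^{-1/m_2(H)}$ the target $\tfrac12\gamma pn$ is a \emph{growing} multiple of $\Ex[X_v]$, so no bounded‑moment argument can work: an exponential upper‑tail estimate for subgraph counts, of the form $\Pr[X_v>\tfrac12\gamma pn]\le n^{-2}$, is needed. This is precisely the kind of bound furnished by Warnke's inequality — applied in its rooted version, or to the count of copies of $F$ having a vertex pinned at $v$ — after which a union bound over $v$ completes (iii).

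Granting (i)--(iii), the rest is routine. Put $W:=[n]\setminus U$ and replace $G[U]$ by $J$, obtaining $G':=J\cup G[U,W]\cup G[W]\subseteq G$; since $J$ is $F$‑free, every copy of $F$ in $G'$ meets $W$ and hence (as $F$ has no isolated vertex) contains an edge not inside $U$. For each copy of $F$ in $G'$ select one such edge, let $D$ be the set of selected edges, and set $\hat G:=G'\setminus D$. Then $\hat G$ is $F$‑free, hence $H$‑free; it contains $J$ since $D$ is disjoint from $E(J)$, so $\chi(\hat G)\ge\chi(J)\ge k\ge\tfrac12\gamma^{-1/2}$; and it spans $[n]$. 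For the degrees, each edge of $D$ incident to a vertex $v$ comes from a distinct copy of $F$ through $v$ in $G'\subseteq G$, so $v$ loses at most $X_v\le\tfrac12\gamma pn$ edges to $D$; thus $\deg_{\hat G}(v)\ge\deg_G(v)-\tfrac12\gamma pn\ge(1-\tfrac32\gamma)pn$ for $v\in W$, while for $u\in U$ we further discard at most $\Delta(G[U])=o(pn)$ edges inside $U$, giving $\deg_{\hat G}(u)\ge\deg_G(u)-\Delta(G[U])-\tfrac12\gamma pn\ge(1-\tfrac32\gamma)pn-o(pn)$; both exceed $(1-2\gamma)pn$ once $n$ is large. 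Finally, $\delta_\chi(H,p)=1$ follows on letting $\gamma\to0$: for any $d<1$ and any $C$, choosing $\gamma$ with $2\gamma<1-d$ and $\tfrac12\gamma^{-1/2}>C$ yields (whp) an $H$‑free spanning subgraph $\hat G\subset G(n,p)$ with $\delta(\hat G)>dpn$ and $\chi(\hat G)>C$, so no $d<1$ lies in the infimum of Definition~\ref{def:deltachip}, whereas $d=1$ does, vacuously, since whp $\delta(G(n,p))<pn$.
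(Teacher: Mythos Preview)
Your proposal is correct and shares with the paper the core idea: delete one edge from each copy of a $2$-balanced subgraph $F\subseteq H$ with $m_2(F)=m_2(H)$, using an upper-tail bound on the rooted count $X_F(v)$ to control the degree loss. The two proofs diverge in two places. First, for the concentration step you invoke Warnke's inequality, whereas the paper applies Kim--Vu (Lemma~\ref{lem:Fconc}), which however only works for $p\ge (\log n)^{2e(F)+5}/n$; for the bottom sliver $\tfrac{\log n}{n}\ll p\le (\log n)^{O(1)}/n$ the paper instead takes $F$ to be an arbitrary cycle of $H$ and uses a second-moment calculation to show that whp every vertex lies in at most one copy. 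You should check that the version of Warnke you have in mind really covers this bottom range without a similar case split. Second, and more substantively, your chromatic-number argument is quite different: you reserve a set $U$ of size $\Theta(1/p)$, find a high-girth high-chromatic subgraph $J\subseteq G[U]$ (the ``easy'' case of Proposition~\ref{prop:rob}), and protect $J$ by only deleting edges with an endpoint in $W$. The paper avoids this auxiliary construction entirely: since at most $\gamma p n$ edges are deleted at each vertex, at most $\gamma p n^2$ edges are deleted in total, while by Chernoff every set of size $2\sqrt{\gamma}\,n$ spans more than $\gamma p n^2$ edges of $G(n,p)$; hence no such set is independent in $G$, giving $\chi(G)\ge\tfrac12\gamma^{-1/2}$ directly. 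The paper's route is shorter and needs no girth control; your route has the virtue of producing an explicit witness to large chromatic number that is disjoint from the deletion process.
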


For the proof we need the following lemma, which is a straightforward consequence of the celebrated `polynomial concentration inequality' of Kim and Vu~\cite{KV}\footnote{An anonymous referee pointed out to us that one can avoid the polynomial concentration machinery by using a very nice trick of Krivelevich~\cite{KriLarge}.}. Recall that a graph $F$ is said to be \emph{$2$-balanced} if $\frac{e(F')-1}{v(F')-2} \le \frac{e(F)-1}{v(F)-2}$ for each subgraph $F' \subsetneq F$ with $v(F') \ge 3$. Given a graph $F$ and a vertex $v$, let $X_F(v)$ denote the random variable that counts the number of copies of $F$ in $G(n,p)$ that contain $v$.

\begin{lemma}\label{lem:Fconc}
Let $F$ be a $2$-balanced graph with $m_2(F) > 1$, and suppose that $p = p(n)$ satisfies $\tfrac{(\log n)^{2e(F)+5}}{n} \le p\le n^{-1/m_2(F)}$. Then
\begin{equation}\label{eq:Fconc}
\Pr\bigg( X_F(v) \ge \Ex\big[ X_F(v) \big] + \frac{pn}{\log n} \bigg) \, \le \, \frac{1}{n^2}
\end{equation}
for all sufficiently large $n$.
\end{lemma}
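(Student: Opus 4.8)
The plan is to apply the Kim--Vu polynomial concentration inequality to the random variable $X_F(v)$, which is a polynomial of degree $e(F)$ in the edge-indicator variables of $G(n,p)$. Write $X_F(v) = \sum_\alpha \mathbbm{1}[\alpha]$, where $\alpha$ ranges over potential copies of $F$ through $v$ and $\mathbbm{1}[\alpha]$ is the product of the $e(F)$ edge indicators of $\alpha$. The Kim--Vu machinery controls the upper tail of such a polynomial in terms of the quantities $\Ex_j := \max_{|S| = j} \Ex[\partial_S X_F(v)]$, where the maximum is over sets $S$ of $j$ edge-slots and $\partial_S$ denotes the partial derivative (i.e.\ the expected number of copies of $F$ through $v$ that use all the edges in $S$). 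The key point is that $\Ex_0 = \Ex[X_F(v)]$ is of order $pn$ (since $F$ has $m_2(F) > 1$, so $p \approx n^{-1/m_2(F)}$ makes copies of $F$ through a fixed vertex appear in expected bounded-times-$pn$ abundance, at least when $p$ is at the top of the allowed range; in general $\Ex_0 \le n^{v(F)-1} p^{e(F)}$, and one checks this is $O(pn)$ in the stated range using $p \le n^{-1/m_2(F)}$).

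The main step is the bookkeeping on the derivatives $\Ex_j$ for $j \ge 1$. Fixing $j$ edges of a copy of $F$ spans some subgraph $F' \subset F$ with some number of vertices $v(F')$; the number of ways to extend to a full copy through $v$ is at most $n^{v(F) - v(F')}$ (or $n^{v(F) - v(F') - 1}$ if $v \in F'$, but we can afford to be generous), and the remaining $e(F) - j$ edges each contribute a factor $p$. Because $F$ is $2$-balanced, every proper subgraph $F'$ with $v(F') \ge 3$ satisfies $\frac{e(F')-1}{v(F')-2} \le m_2(F)$, equivalently $n^{v(F')-2} p^{e(F')-1} \ge 1$ is \emph{false} in the relevant regime — more precisely, the $2$-balancedness together with $p \le n^{-1/m_2(F)}$ forces $n^{v(F) - v(F')} p^{e(F) - e(F')}$ to be at most $n^{v(F)-1}p^{e(F)} \cdot (n p)^{-(\text{something} \ge 0)}$, so that each $\Ex_j$ with $j \ge 1$ is substantially smaller than $\Ex_0$. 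The upshot should be that the `second moment'-type quantity $\Ex' := \max_{1 \le j \le e(F)} \Ex_j$ satisfies $\Ex' \le \Ex_0 / (np)^{c}$ for a constant $c > 0$ depending only on $F$, or at any rate $\Ex' = O(1)$ while $\Ex_0 \cdot \Ex'$ is polynomially smaller than $(pn / \log n)^2$.

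Then I would feed these bounds into the Kim--Vu inequality in the standard form: for a polynomial $Y$ of degree $D$ with the $\Ex_j$ as above, $\Pr\big(|Y - \Ex Y| \ge a_D \sqrt{\Ex_0 \Ex'}\,\lambda^D\big) \le O(e^{-\lambda + (D-1)\log n})$ for $\lambda \ge 1$, where $a_D$ depends only on $D = e(F)$. Choosing $\lambda = C \log n$ for a suitable constant $C = C(F)$ kills the $n^{D-1}$ factor and leaves probability $\le n^{-2}$; and one checks that the deviation $a_D \sqrt{\Ex_0 \Ex'}\,(C\log n)^D$ is at most $pn/\log n$ precisely in the range $p \ge (\log n)^{2e(F)+5}/n$ — this is where that exponent $2e(F)+5$ on the logarithm comes from, absorbing the $\lambda^D = (\log n)^{O(e(F))}$ factor and the $\sqrt{\Ex_0}$ factor. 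The main obstacle is getting the exponents in the polynomial-of-$\log n$ bookkeeping to line up so that the threshold on $p$ is exactly $(\log n)^{2e(F)+5}/n$ rather than merely some unspecified power of $\log n$; the $2$-balancedness hypothesis is exactly what makes the derivative estimates go through cleanly, and the condition $m_2(F) > 1$ is what guarantees $np \to \infty$ so that the error terms are genuinely lower-order.
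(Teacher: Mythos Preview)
Your proposal is correct and takes essentially the same approach as the paper: apply the Kim--Vu polynomial concentration inequality to $X_F(v)$, bound the partial expectations $E_F(S,v)$ (your $\Ex_j$) using the $2$-balancedness of $F$ together with $p\le n^{-1/m_2(F)}$, and then check that the polylogarithmic factors from Kim--Vu are absorbed by the lower bound $pn\ge(\log n)^{2e(F)+5}$. The paper's write-up differs only cosmetically, recording the slightly weaker (but sufficient) estimate $E'_F(v)\le pn(\log n)^{-2e(F)-4}$ rather than your $\Ex'=O(1)$; in fact the same $2$-balancedness computation yields $n^{v(F)-|T|}p^{e(F)-|S|}\le (np^{m_2(F)})^{v(F)-|T|}\le 1$, so your sharper bound is correct.
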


In order to prove Lemma~\ref{lem:Fconc}, we will need to recall the main theorem of~\cite{KV}. For each set $S \subset E(K_n)$, let $E_F(S,v)$ denote the conditional expectation, given that $S \subset E\big( G(n,p) \big)$, of the number of copies of $F$ in $G(n,p)$ which contain $v$ and use all the edges in $S$. Let $E'_F(v) = \max_{S\neq\emptyset}E_F(S,v)$, and $E_F(v) = \max\big\{ E'_F(v), \, \Ex[X_F(v)] \big\}$. The main theorem of~\cite{KV} implies\footnote{To obtain~\eqref{eq:KV}, we apply the general theorem stated in~\cite{KV} to the polynomial corresponding to the random variable $X_F(v)$, with $\lambda = 2 e(F) \log \binom n 2$.} that
\begin{equation}\label{eq:KV}
 \Pr\Big( X_F(v) \ge \Ex\big[ X_F(v) \big] + (\log n)^{e(F)+1} \big( E_F(v)E'_F(v) \big)^{1/2}  \Big) \, \le \, n^{-2e(F)}
\end{equation}
if $n$ is sufficiently large.

\begin{proof}[Proof of Lemma~\ref{lem:Fconc}]
In order to apply~\eqref{eq:KV} we need to bound $E_F(S,v)$ from above for each non-empty set of edges $S$. We claim that, for each such $S$, we have 
\begin{equation}\label{eq:EFSv:bound}
E_F(S,v) \, \le \, pn(\log n)^{-2e(F)-4}.
\end{equation}
We now justify this claim. Given a non-empty set $S$ of edges of $K_n$, let $T$ be the set of vertices incident to the edges $S$. 
Observe that
 \begin{equation}\label{eq:Fcheck}
  E_F(S,v) \le v(F)^{|T|}n^{v(F)-|T|}p^{e(F)-|S|}\,.
 \end{equation}
Suppose first that $|T|=v(F)$. Then $E_F(S,v) \le v(F)^{|T|} = O(1)$, from which~\eqref{eq:EFSv:bound} follows since $p \ge \tfrac{(\log n)^{2e(F)+5}}{n}$. On the other hand, since $F$ is $2$-balanced, if $2 \le |T| < v(F)$ then we have $|S| \le m_2(F) \big( |T| - 2 \big) + 1$. Since $e(F) = m_2(F)\big( v(F) - 2 \big) + 1$ and $m_2(F) > 1$, it follows that
$$e(F) - |S| \, = \, m_2(F)\big( v(F) - |T| - 1 \big) + 1 + c$$
for some $c > 0$, and hence, from~\eqref{eq:Fcheck}, that
$$E_F(S,v) \le v(F)^{|T|} p^{1+ c} n \cdot \big( p^{m_2(F)} n \big)^{v(F) - |T| - 1} \, \le \, pn (\log n)^{-2e(F)-4}$$
if $n$ is sufficiently large, by our choice of $p$. This proves~\eqref{eq:EFSv:bound}.

Now, to deduce~\eqref{eq:Fconc}, simply note that $\Ex[X_F(v)] \le p^{e(F)} n^{v(F)-1} \le pn$, since $p \le n^{-1/m_2(F)}$ and $F$ is $2$-balanced. Thus, by~\eqref{eq:EFSv:bound}, we have
$$E_F(v)E'_F(v) \, \le \, (pn)^2 (\log n)^{-2e(F)-4},$$
and hence, by~\eqref{eq:KV}, we have  
$$\Pr\bigg( X_F(v) \ge \Ex\big[ X_F(v) \big] + \frac{pn}{\log n} \bigg) \, \le \, n^{-2e(F)},$$
as required.
\end{proof}

We can now easily deduce Proposition~\ref{prop:verysmallp}.

\begin{proof}[Proof of Proposition~\ref{prop:verysmallp}]
Suppose first that $p \ge \tfrac{(\log n)^{2e(H) + 5}}{n}$. In this case let $F \subset H$ be a subgraph of $H$ with $\frac{e(F) - 1}{v(F) - 2} = m_2(H)$, and note that $F$ is 2-balanced, and that $m_2(F) = m_2(H) > 1$, since otherwise the statement is vacuous. Since $p \ll n^{-1/m_2(H)}$ implies that $\Ex\big[ X_F(v) \big] = o(pn)$, it follows by Lemma~\ref{lem:Fconc} that, with high probability, every vertex of $G(n,p)$ lies in at most $\gamma p n$ copies of $F$. 

On the other hand, if $p \le \tfrac{(\log n)^{O(1)}}{n}$, then we may take
$F$ to be an arbitrary cycle in $H$. (If $H$ is a forest then $m_2(H) \le
1$ and again the proposition holds vacuously.) To estimate the number of
pairs of copies of $F$ in $G(n,p)$ both containing $v$, observe that if two
such copies of $F$ intersect in a set of edges $S$ and vertices $T$, then $T$
contains $v$ and thus we have $|S|\le|T|-1$. Furthermore, $e(F)=v(F)$, and so the expected number of pairs of copies of $F$ in $G(n,p)$, both containing $v$, is at most
$$\sum_{S \subsetneq E(F)} O\Big( p^{2e(F) - |S|} n^{2v(F) - |T| - 1} \Big)
\, \le \, \frac{(\log n)^{O(1)}}{n^2}\,.$$
Hence, by Markov's inequality, with high probability every vertex of $G(n,p)$ lies in at most one copy of $F$.

Now, by Chernoff's inequality, $G(n,p)$ has minimum degree at least $(1-\gamma)pn$, and each $X \subset V\big(G(n,p)\big)$ with $|X| \ge\gamma n$ contains more than $p|X|^2 / 4$ edges. If all three of these likely events occur, then we can construct the desired graph $G \subset G(n,p)$ simply by deleting one edge from each copy of $F$ in $G(n,p)$. 

To see that $G$ has the required properties, observe first that $G$ is $F$-free, and therefore $H$-free. Next, note that we have deleted at most $\gamma p n$ edges at each vertex of $G(n,p)$, so we have $\delta(G)\ge(1-2\gamma)pn$, as claimed. Finally, if $|X|\ge 2\sqrt{\gamma}n$ then, since at most $\gamma p n^2$ edges of $G(n,p)$ are deleted to obtain $G$, and $X$ contains more than $\gamma p n^2$ edges of $G(n,p)$, the set $X$ is not independent in $G$. It follows that $\chi(G)\ge \tfrac12\gamma^{-1/2}$ as desired.
\end{proof}

For constant~$p$, on the other hand, the chromatic threshold was determined in~\cite{dense}.

\begin{theorem}\label{thm:pconst}
  For each constant $p > 0$ and graph $H$, we have
  $\delta_\chi(H,p) = \delta_\chi(H)$.
\end{theorem}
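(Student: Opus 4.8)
The plan is to prove the two bounds $\delta_\chi(H,p)\ge\delta_\chi(H)$ and $\delta_\chi(H,p)\le\delta_\chi(H)$ separately. The first is a construction (it exhibits an $H$-free subgraph with large minimum degree and unbounded chromatic number) and is comparatively soft; the second is a colouring statement, and is the substantial direction, which I would deduce from the dense result~\eqref{eq:CT:thm} via Szemer\'edi's regularity lemma.

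\emph{Lower bound.} Fix $d<\delta_\chi(H)$ and an arbitrary constant $C$; it suffices to produce, with high probability, an $H$-free spanning subgraph $G\subset G(n,p)$ with $\delta(G)\ge dpn$ and $\chi(G)>C$. Pick $\gamma>0$ with $d<\delta_\chi(H)-\gamma$. By the lower-bound constructions of~\cite{ChromThresh} there is a graph $J$ on a constant number $m$ of vertices with $\delta(J)\ge(\delta_\chi(H)-\gamma)m$ and $\chi(J)>C$, which moreover may be chosen so that every blow-up of $J$ is $H$-free (these constructions are built from graphs of large odd girth, which is what makes this robustness available). Partition $[n]$ uniformly at random into sets $V_1,\dots,V_m$ of size $(1+o(1))n/m$, and let $G$ consist of those edges of $G(n,p)$ joining some $V_i$ to some $V_j$ with $ij\in E(J)$. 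Then $G$ is a subgraph of a blow-up of $J$, hence $H$-free; by Chernoff's inequality and a union bound, with high probability $|N_G(v)\cap V_j|=(1+o(1))p|V_j|$ for every $v\in V_i$ and every neighbour $j$ of $i$ in $J$, so $\delta(G)\ge(1-o(1))\delta(J)pn/m\ge dpn$; and since each $G(n,p)[V_i,V_j]$ with $ij\in E(J)$ is nonempty, $G$ contains a transversal copy of $J$, so $\chi(G)\ge\chi(J)>C$. As $C$ was arbitrary, this gives $\delta_\chi(H,p)\ge\delta_\chi(H)$.

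\emph{Upper bound.} Fix $d>\delta_\chi(H)$; I must exhibit a constant $C=C(H,p,d)$ such that with high probability every $H$-free $G\subset G(n,p)$ with $\delta(G)\ge dpn$ has $\chi(G)\le C$. First condition on the likely event that $G(n,p)$ is pseudorandom in the mild sense that $e_{G(n,p)}(A,B)\le(1+o(1))p|A||B|$ for all disjoint $A,B$ with $|A|,|B|\ge n/t_0$, where $t_0=t_0(\eps)$ is the constant bound in the regularity lemma (this follows from Chernoff and a union bound over the at most $4^n$ such pairs, as $p|A||B|=\Omega(n^2)$). Now apply Szemer\'edi's regularity lemma to $G$ --- legitimate since $e(G)\ge\tfrac12 dpn^2$ is quadratic --- obtaining an $\eps$-regular equipartition $V_1,\dots,V_t$, and define the reduced graph $R$ on $[t]$ by putting $ij\in E(R)$ whenever $(V_i,V_j)$ is $\eps$-regular of density at least $\gamma'p$, for a small constant $\gamma'>0$. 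Then $R$ is $H$-free --- otherwise the counting lemma (applicable since the surviving densities are $\Theta(1)$) embeds $H$ into $G$ --- and $\delta(R)\ge(d-\eta)t$ for some small $\eta=\eta(\gamma',\eps,p)$: a typical $v\in V_i$ has $\deg_G(v)\ge dpn$, and this degree can only be absorbed by edges into the $\deg_R(i)$ parts joined to $V_i$ in $R$ (up to an $O(\gamma'+\sqrt\eps)$-fraction of slack), where crucially the density bound $\le(1+o(1))p$ from pseudorandomness makes the factors of $p$ cancel, leaving a clean lower bound on $\deg_R(i)/t$. Taking $\gamma',\eps$ small, $\delta(R)>(\delta_\chi(H)+\beta)t$ for some $\beta>0$, so by~\eqref{eq:CT:thm} we have $\chi(R)\le C_0$ for some $C_0=C_0(H,\beta)$.

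It remains to lift a proper colouring of $R$ to a bounded colouring of $G$, and I expect this to be the main obstacle. Colouring each $v\in V_i$ by the colour of $i$ properly colours the ``regular and dense'' part of $G$, but leaves uncoloured all edges inside the parts, all edges in sparse or irregular pairs, and all edges meeting the exceptional set --- collectively an $O(\gamma'+\eps)$-fraction of $E(G)$, yet still a graph of linear average degree, which need not have bounded chromatic number. To control it I would use not the bare statement~\eqref{eq:CT:thm} but the \emph{robust structural} description underlying it in~\cite{ChromThresh} --- that an $H$-free graph with minimum degree above $(\delta_\chi(H)+\beta)|V|$ admits a bounded cover by vertex sets each inducing a graph homomorphic to a member of a fixed finite family --- pull this structure back from $R$ to $V(G)$, observe that the pieces induce in $G$ only a sparse $H$-free graph, recolour each with $O_p(1)$ further colours (iterating the regularity step a bounded number of times to eat into the sparse leftover and absorbing the $o(n)$ exceptional vertices at the end), and take the product colouring. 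Checking that this structure really is robust under adding a sparse $H$-free graph, and that the iteration terminates with a constant depending only on $H$, $p$ and $d$, is where the real work lies; this is carried out in~\cite{dense}.
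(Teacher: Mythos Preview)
The paper does not prove this theorem; it is quoted from the companion paper~\cite{dense}, so there is no in-paper argument to compare against. Your outline is broadly what one expects the proof in~\cite{dense} to look like, and you correctly identify the colour-lifting step in the upper bound as the real work and defer it there.

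One point in your lower bound warrants more care. Saying that every blow-up of $J$ is $H$-free is equivalent to saying that $J$ admits no homomorphism from $H$, which is strictly stronger than $J$ being $H$-free. For $\chi(H)=r\ge 4$ this does follow from a property the~\cite{ChromThresh} constructions have (shared by the sparse analogue in Theorem~\ref{thm:sparsetop} here): every $v(H)$-vertex subgraph of $J$ is $(r-1)$-colourable, so any homomorphic image of $H$ in $J$ would pull back to a proper $(r-1)$-colouring of $H$, a contradiction. But for $\chi(H)=3$ the relevant constructions (cf.\ Lemma~\ref{lem:acycstruct} and Propositions~\ref{prop:cloud:lower}--\ref{prop:thundercloud:lower}) only guarantee that small subgraphs lie in the cloud-forest or thundercloud-forest class, and these classes are not closed under homomorphic preimage in the way $(r-1)$-colourability is, so the same argument does not immediately go through. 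Your parenthetical about ``large odd girth'' is the right intuition --- if the odd girth of $J$ exceeds that of $H$ then indeed there is no homomorphism $H\to J$ --- but the dense $3$-chromatic constructions do not all have large odd girth (Lemma~\ref{lem:acycstruct}$(d)$ only controls \emph{where} short odd cycles sit, not their existence). So either you need a direct verification that those specific constructions are $H$-hom-free, or a different route for the lower bound --- for instance, intersecting the $n$-vertex dense construction $J_n$ itself with $G(n,p)$ and arguing that its high chromatic number survives random sparsification at constant $p$, which is also not entirely free.
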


This together with the following lower bound on $\delta_\chi(H,p)$
establishes Theorem~\ref{thm:classhigh}.

\begin{theorem}\label{thm:sparsetop} 
Let  $r \ge 4$, $s \in \mathbb{N}$ and $\gamma > 0$. If $p = p(n)$ satisfies $n^{-1/2} \, \ll \, p \, \ll \, 1$ 
as $n \to \infty$, then with high probability $G(n,p)$ contains a spanning subgraph $G$ with
$$\chi(G) \ge s \qquad \text{and} \qquad \delta(G) \ge \bigg( \frac{r-2}{r-1} - \gamma \bigg) pn,$$ 
such that every $s$-vertex subgraph of $G$ is $(r-1)$-colourable.

In particular, $\delta_\chi(H,p) \ge \pi(H)$ for every graph $H$ with $\chi(H) \ge 4$.
\end{theorem}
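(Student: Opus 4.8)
The plan is to build $G$ by partitioning $V(G(n,p))$ into $r-1$ parts and, inside one of these parts, planting a robust small graph of large chromatic number coming from Proposition~\ref{prop:rob}. More precisely, I would first fix a suitable $\eps = \eps(\gamma, r) > 0$ and a parameter $k = k(s,\eps)$ large, and invoke Proposition~\ref{prop:rob} to obtain (with high probability) a subgraph $G_0 \subset G(n,p)$ with $v(G_0) \le \eps/p$, $\chi(G_0) \ge k$, $\girth(G_0) \ge k$, and the expansion property that $G_0[A,B]$ has an edge whenever $A,B \subset V(G_0)$ are disjoint with $|A|,|B| \ge \eps v(G_0)$. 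Then I would take an arbitrary equipartition $V(G(n,p)) = V_1 \cup \dots \cup V_{r-1}$ with $V(G_0) \subset V_1$ (this is possible since $v(G_0) = o(n) \ll n/(r-1)$), and let $G$ consist of all $G(n,p)$-edges between distinct parts $V_i, V_j$, together with the edges of $G_0$ inside $V_1$.

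The degree bound is the easy part: by Chernoff, with high probability every vertex of $G(n,p)$ has at least $(1-\gamma')pn$ neighbours, of which at most $\frac{1}{r-1}(1+\gamma')pn$ lie in its own part, so each vertex keeps at least $\big(\frac{r-2}{r-1}-\gamma\big)pn$ cross-edges in $G$; vertices of $G_0$ keep at least this many as well (the extra $G_0$-edges only help). For $\chi(G) \ge s$: an independent set of $G$ meets each $V_i$ in an independent set of $G(n,p)[V_i]$, and inside $V_1$ it must moreover be independent in $G_0$; since with high probability $\alpha(G(n,p)) = O(\log n / p) = o(n)$, and $\chi(G_0) \ge k$ forces $\alpha(G_0) \le v(G_0)/k$, a short counting argument shows $\alpha(G) = o(n)$, hence $\chi(G) \ge n/\alpha(G) \to \infty$, which is $\ge s$ for $n$ large. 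The one genuinely delicate point is the \emph{local} colouring property: every $s$-vertex subgraph $S \subset G$ must be $(r-1)$-colourable. Colour each $V_i \cap S$ with colour $i$; this is proper except possibly on $S \cap V_1$, where the edges of $G_0$ intrude. So it suffices to recolour $S \cap V_1$ with colours $\{1,\dots,r-1\}$ so that the induced $G_0$-subgraph on the at most $s$ vertices of $S \cap V_1$ is properly coloured while avoiding colour-$j$ neighbours in $V_j$ — but this last constraint is automatically fine because $G$ has no edges from $V_1$ to $V_j$ beyond the cross-edges which we are re-examining, so really I just need the $G_0$-subgraph on $\le s$ vertices to be $2$-colourable (giving it colours from $\{1,2\}$, say). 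This follows from $\girth(G_0) \ge k > s$: any subgraph of $G_0$ on at most $s$ vertices is a forest, hence bipartite.

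The main obstacle, then, is making sure the recolouring of $S \cap V_1$ genuinely repairs all conflicts. The subtlety is that after recolouring a vertex $v \in S \cap V_1$ from colour $1$ to colour $2$, it may clash with a colour-$2$ vertex in $V_2 \cap S$; but $v$ has no neighbours in $V_2$ at all in the cross-edge set unless we included them — and we did include \emph{all} cross-edges of $G(n,p)$. So the correct fix is to give $S \cap V_1$ an entirely fresh colour, i.e. to use $r-1$ colours where $V_1$ gets colours only from a palette disjoint in spirit from the others; concretely, recolour $S \cap V_1$ using colours $1$ and $2$ via a proper $2$-colouring of the forest $G_0[S \cap V_1]$, then observe the only possible new conflicts are between $S \cap V_1$ and $S \cap (V_1 \cup V_2)$, and there are no $G$-edges inside $V_1$ except those of $G_0$ (already handled) and none between $V_1$ and $V_2$ are incident to recoloured vertices in a bad way once we note... — this is exactly the point to be careful about, and the clean resolution is: since $|S| \le s < k \le \girth(G_0)$, the graph $G[S]$ itself, restricted to edges inside $V_1$, is a forest, so a proper $2$-colouring of that forest extended by colours $3,\dots,r-1$ (wait, we want colours $1,\dots,r-1$): assign colours to $V_i \cap S$ for $i \ge 2$ as colour $i$, and properly $2$-colour the forest $G[S \cap V_1]$ with colours $1$ and $2'$ where we simply reuse colour $1$ and introduce no genuine conflict because $G$ has no $V_1$–$V_i$ edges other than cross-edges and those are present for \emph{all} pairs, so in fact the honest statement is that the bipartite-within-$V_1$ structure plus the complete multipartite cross-structure means $G[S]$ is $(r-1)$-colourable iff the $V_1$-part, which is a forest, can be $1$-coloured — it cannot, so we must spend a second colour, giving $r-1 \ge 3$ colours total, which is exactly why the hypothesis is $r \ge 4$ (so $r - 1 \ge 3$). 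Checking this parity-of-colours bookkeeping carefully — that a forest inside $V_1$ plus a complete $(r-1)$-partite cross-pattern on $\le s$ vertices uses at most $r-1$ colours — is the crux, and it reduces to: $1$ (for the other parts, after merging) is impossible, $2$ colours inside $V_1$ suffice for the forest, and $2 + (r-2) = r$ would be too many unless we reuse, so one verifies $r-1$ suffices by a direct greedy argument on the $\le s$ vertices.
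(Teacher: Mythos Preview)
Your construction has a genuine gap, and it is exactly the point you yourself flag as ``the one genuinely delicate point''. In your graph $G$, every vertex of $G_0\subset V_1$ keeps \emph{all} its $G(n,p)$-edges to $V_2,\dots,V_{r-1}$. So take $r=4$, pick any edge $ab\in E(G_0)$, and (with high probability when, say, $p\ge n^{-2/5}$) find $v_2\in V_2$ and $v_3\in V_3$ such that $av_2,av_3,bv_2,bv_3,v_2v_3\in E(G(n,p))$; all five are cross-edges, hence lie in $G$, and together with $ab$ they form a $K_4$. Any $s$-vertex set containing this $K_4$ is not $3$-colourable. Your recolouring attempt cannot get around this: giving $a$ and $b$ colours from $\{1,2\}$ or $\{2,3\}$ inevitably collides with $v_2$ or $v_3$. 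The hand-wave ``one verifies $r-1$ suffices by a direct greedy argument'' is simply false for this $G$. (Separately, your argument that $\alpha(G)=o(n)$ is wrong---$V_2$ is independent in $G$ and has size $n/(r-1)$---but this is harmless since $\chi(G)\ge\chi(G_0)\ge k$ directly.)

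The paper repairs this with one extra idea: it does \emph{not} put $V(F)$ into any $V_i$, and it does \emph{not} let $F$-vertices keep arbitrary cross-edges. Instead, after finding $F$ inside a set $X$ of size $n/(r-1)$, it exposes the edges from $V(F)$ to $Y:=[n]\setminus X$, sets $I_u:=N_{G(n,p)}(u)\cap Y$ for $u\in V(F)$, and places $\bigcup_u I_u$ into $V_1$ (along with $X\setminus V(F)$), equipartitioning the remainder of $Y$ into $V_2,\dots,V_{r-1}$. The edge set of $G$ is then $E(F)\cup\{uv:u\in V(F),\,v\in I_u\}$ together with the $G(n,p)$ cross-edges between distinct $V_i,V_j$. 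The whole point is that now every $G$-neighbour of a vertex of $F$ lies in $V(F)\cup V_1$. Hence, for any $s$-set $W$, colour $W\cap V_i$ with colour $i$; the remaining vertices $W\cap V(F)$ induce a forest (girth $>s$) and see only colour $1$ among the already-coloured vertices, so one may $2$-colour this forest with colours $2$ and $3$ without any conflict. This is precisely where $r\ge 4$ enters. The minimum-degree check still goes through because each $u\in V(F)$ has $|I_u|\approx p|Y|=\tfrac{r-2}{r-1}\,pn$ neighbours, and $|\bigcup_u I_u|\le pn\cdot v(F)\le \eps n$, so the parts $V_2,\dots,V_{r-1}$ are barely perturbed.
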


Before proving this theorem, we first spell out the deduction of Theorem~\ref{thm:classhigh}.

\begin{proof}[Proof of Theorem~\ref{thm:classhigh}]
If $p > 0$ is constant then the result follows from Theorem~\ref{thm:pconst}, and if $\frac{\log n}{n} \ll p \ll n^{-1/m_2(H)}$ then it follows from Proposition~\ref{prop:verysmallp}. Moreover, the upper bound in the range $p \gg n^{-1/m_2(H)}$ follows by Theorem~\ref{thm:CGS}.

It remains to show that $\delta_\chi(H,p) \ge \pi(H)$ for every $n^{-1/m_2(H)} \ll p \ll 1$. If $H$ is bipartite then this is immediate, since $\pi(H) = 0$, so let us assume that $\chi(H) \ge 4$  and $m_2(H) \ge 2$. (Note that $m_2(H) > 2$ for every graph $H$ with $\chi(H) \ge 5$. Indeed, if $m_2(H) \le 2$ then $e(F) \le 2v(F) - 3$ for every subgraph $F \subset H$ with more than one vertex. In particular, this implies that every subgraph of $H$ has a vertex of degree at most 3, and hence $\chi(H) \le 4$.) But now $n^{-1/m_2(H)} \ge n^{-1/2}$, and so the claimed bound follows from Theorem~\ref{thm:sparsetop}. 
\end{proof}

Now we will use Proposition~\ref{prop:rob} to prove Theorem~\ref{thm:sparsetop}.

\begin{proof}[Proof of Theorem~\ref{thm:sparsetop}]

To construct $G$, we first partition the vertex set into sets $X$ and $Y$, with $|X| = n/(r-1)$, and expose the edges of $G(n,p)$ contained in $X$. By Proposition~\ref{prop:rob}, with high probability, there exists a subgraph $F$ on at most $\eps / p$ vertices with girth and chromatic number both at least $s+1$. We fix one such $F$.

We next expose the edges of $G(n,p)$ between $V(F)$ and $Y$, and let $I_u = N(u)\cap Y$ for each $u \in V(F)$. Set 
$$V_1 = \big( X \setminus V(F) \big) \cup \bigcup_{u \in V(F)} I_u$$
and let $V_2 \cup \dots \cup V_{r-1}$ be an arbitrary equipartition of $Y
\setminus \bigcup_{u \in V(F)} I_u$.  Note that we have not yet revealed
any pair between any $V_i$ and $V_j$ with $i \ne j$.
We reveal them now.

We are now ready to define $G$ to be the spanning subgraph of $G(n,p)$ with edge set
$$E(F) \cup \big\{ uv : u \in V(F), \, v \in I_u \big\} \cup \big\{ uv \in E(G(n,p)) : u \in V_i, \, v \in V_j, \, i \ne j \big\}.$$
We claim that, with high probability, $G$ has the desired
properties. Indeed, note first that $\chi(G) \ge \chi(F) > s$.
Next we (implicitly) use several Chernoff bounds to show that~$G$ has
sufficiently high minimum degree.
Indeed, since $p \gg n^{-1/2}$ and $v(F) \le \eps / p$, with high probability we have   
$| \bigcup_{u \in V(F)} I_u | \le pn \cdot v(F) \le \eps n$, which implies
$|V_i|\ge \frac{n}{r-1}-\eps n$.
So with high probability 
every vertex $u\in V_i$ has at least $\big( \frac{r-2}{r-1} - \gamma \big) p n$ neighbours in $\bigcup_{j\neq i} V_j$ and
every vertex $u\in V(F)$ has at least $\big( \frac{r-2}{r-1} - \gamma \big) p n$ neighbours in $Y$ (because the edges
between $V(F)$ and~$Y$ were only revealed after fixing~$F$), as required.

Finally, we claim that $\chi\big( G[W] \big) \le r-1$ for every $s$-vertex subset $W \subset V(G)$. To show this, first colour $W \cap V_i$ with colour $i$ for each $1 \le i \le r-1$. Now, the remaining vertices $W \cap V(F)$ induce a forest in $G$, because $F$ has girth greater than $s$, and their neighbours in $G$ are all in $V_1$, by construction. We can therefore complete a proper colouring of $G[W]$ using only colours $2$ and $3$ inside $F$. Since $r \ge 4$, this completes the proof. 
\end{proof}

We finish this section by noting that Theorem~\ref{thm:classhigh} does not cover all graphs of chromatic number four. 

\begin{prop}\label{prop:inf42}
There exist infinitely many graphs $H$ with $\chi(H) = 4$ and $m_2(H) < 2$. 
\end{prop}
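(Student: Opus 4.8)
The plan is to exhibit an explicit infinite family. Recall that $m_2(H) < 2$ means $e(F) \le 2v(F) - 3$ for every subgraph $F \subset H$ with $v(F) \ge 3$ (equivalently, every subgraph on at least $3$ vertices has fewer than $2v - 2$ edges), while $\chi(H) = 4$ demands a $4$-critical-type obstruction to $3$-colouring. These two requirements pull in opposite directions: sparsity forbids dense pieces like $K_4$, but $4$-chromaticity usually comes cheaply from dense substructures. So the construction must achieve high chromatic number \emph{globally}, via girth/expansion rather than density — exactly the phenomenon underlying Section~\ref{sec:rob}.

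The cleanest route is to take \emph{generalised Mycielskians} (a.k.a.\ cones over triangle-free graphs), or more simply the family of graphs obtained from long odd cycles by a Mycielski-type operation. Concretely: start with an odd cycle $C_{2k+1}$, which is $3$-chromatic with $m_2 = 1$, and apply the Mycielski construction once to obtain a graph $M_k$. One checks directly that $\chi(M_k) = 4$ (the Mycielski operation raises the chromatic number by exactly one) and that $M_k$ is triangle-free, indeed has girth $\ge 4$ (in fact $M(C_{2k+1})$ for $k\ge 2$ has girth $4$ or $5$). For the sparsity bound, note that the Mycielskian of an $n$-vertex, $e$-edge graph has $2n+1$ vertices and $3e + n$ edges; starting from $C_{2k+1}$ (so $n = e = 2k+1$) gives $v(M_k) = 4k+3$ and $e(M_k) = 4(2k+1) = 8k+4$, so the density ratio $\frac{e-1}{v-2} = \frac{8k+3}{4k+1} \to 2$ from above — which is the \emph{wrong} direction. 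Hence the single Mycielskian of a cycle is too dense, and I will instead need a sparser base graph or a modified cone.

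The fix is to apply the \emph{generalised Mycielski} / cone construction to a graph of large girth and chromatic number $3$ that is already sparse — or better, to use the well-known fact that there exist $4$-chromatic graphs of arbitrarily large girth (Erd\H{o}s), and then prune. Specifically, I would argue: by the Erd\H{o}s argument recalled in Lemma~\ref{lemma:Erdos} (or directly), for every $g$ there is a $4$-chromatic graph $H_g$ with $\girth(H_g) \ge g$; take $H$ to be a vertex-minimal (hence $4$-critical) such graph. A $4$-critical graph has minimum degree $\ge 3$, but a large-girth $4$-critical graph can be made to have average degree close to $2$: by a counting/discharging argument, a $4$-critical graph with girth $g$ has at most $\big(1 + \frac{c}{g}\big) v(H)$ edges for an absolute constant $c$ (this is classical — e.g.\ Kostochka–Yancey-type bounds, or an elementary argument via deleting a shortest cycle). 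Then for $g$ large enough, $e(H) \le 2v(H) - 3$, and moreover every proper subgraph $F$, being properly $3$-colourable and hence $4$-critical-free, satisfies an even better bound, giving $m_2(H) < 2$. Letting $g \to \infty$ over a minimal such $H_g$ for each $g$ produces infinitely many pairwise non-isomorphic examples (their girths are unbounded).

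The main obstacle is the density estimate: I must control $e(F)/v(F)$ \emph{simultaneously over all subgraphs} $F$, not just $H$ itself, to bound $m_2(H)$. The global bound on $e(H)$ from $4$-criticality plus large girth is standard, but a subgraph $F$ need not inherit either property. The resolution is that the binding constraint in $m_2$ comes from small $F$: for $v(F) \ge 3$ with $v(F) \le g$, largeness of girth forces $F$ to be a forest plus a bounded number of extra edges, so $\frac{e(F)-1}{v(F)-2}$ is well below $2$; for $v(F) > g$, one uses the edge count of $H$ itself (so $e(F) \le e(H) \le 2v(H)-3 \le 2v(F)$, with the extra slack absorbed for $v(F)$ not too close to $v(H)$, and the $v(F)$ near $v(H)$ case handled by $4$-criticality of $H$ restricted appropriately). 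Balancing these two regimes — choosing $g$ as a function of the desired slack — is where the real care lies, but it is routine once set up. A cleaner alternative, if one prefers to avoid criticality arguments, is to verify all of this directly for the explicit generalised Mycielskian $M_r(C_{2k+1})$ with the number of "levels" $r$ chosen large compared to a fixed small target, since iterated generalised Mycielskians preserve girth while keeping the edge-to-vertex ratio bounded; but the critical-graph route is shorter to write.
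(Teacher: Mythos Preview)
Your proposal contains a fundamental error. You assert that ``a large-girth $4$-critical graph can be made to have average degree close to $2$'' and that ``a $4$-critical graph with girth $g$ has at most $\big(1 + \tfrac{c}{g}\big) v(H)$ edges''. But in the very same sentence you correctly note that a $4$-critical graph has minimum degree at least $3$; hence its average degree is at least $3$, and $e(H)\ge \tfrac32 v(H)$ for \emph{every} $4$-critical graph, regardless of girth. The bound $(1+c/g)v(H)$ is therefore impossible for large $g$. (The Kostochka--Yancey result you cite goes the other way: it is a \emph{lower} bound $e\ge \tfrac{5v-2}{3}$ for $4$-critical graphs, not an upper bound.) So the global density estimate at the heart of your argument is simply false.

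Your subgraph argument has a second, independent gap. Even granting $e(H)\le 2v(H)-3$, the chain ``$e(F)\le e(H)\le 2v(H)-3\le 2v(F)$'' is valid only when $v(F)\ge v(H)-1$; for subgraphs $F$ with $g<v(F)\ll v(H)$ you have no control, and the hand-wave about ``extra slack absorbed'' does not supply one. High girth only handles $v(F)<g$; the intermediate regime is precisely where the difficulty lies, and you have not addressed it. Your alternative via generalised Mycielskians $M_r(C_{2k+1})$ fails for the same reason you already identified for $r=1$: a short computation gives $\tfrac{e-1}{v-2}=\tfrac{2r(2k+1)-1}{r(2k+1)-1}>2$, so $m_2>2$ for every $r$ and $k$.

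For comparison, the paper's argument is entirely different and avoids these density obstacles by an explicit construction: it builds a $12$-vertex, $19$-edge ``gadget'' $G_0$ containing two non-adjacent vertices $x,y$ that receive distinct colours in every proper $3$-colouring, so that replacing an edge $xy$ of a larger graph by a copy of $G_0$ preserves $4$-chromaticity while adding $10$ vertices and $18$ edges. Replacing all six edges of $K_4$ gives a single $4$-chromatic graph on $64$ vertices with $114$ edges and $m_2<2$, and iterating the edge-replacement yields infinitely many examples. The point is that each gadget insertion strictly lowers the ratio $(e-1)/(v-2)$, so one can dilute density at will without touching the chromatic number; nothing probabilistic or girth-based is needed.
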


\begin{proof}
We will construct a graph $G_0$, which we call a \emph{gadget} (see Figure~\ref{fig:Simon}), with 12 vertices and 19 edges, and which satisfies the following key property: there are (non-adjacent) vertices $x,y \in V(G_0)$ such that in any proper 3-colouring of $G_0$ the vertices $x$ and $y$ receive different colours. In other words, if we replace an edge $xy$ of a larger graph $G$ by a gadget on $xy$, then (when 3-colouring $G$) the gadget plays the same role as the edge.

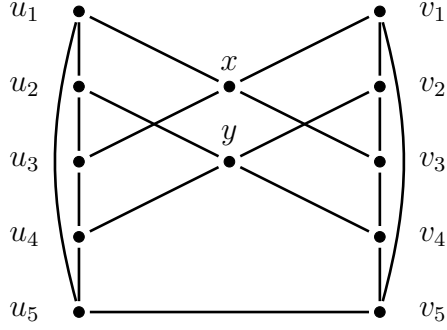
\begin{figure}
\begin{tikzpicture}
  \tikzset{label distance=3mm}
  \foreach \a in {1,...,5} { 
    \wvertex[$u_\a$]{u\a}{0,-\a} 
    \evertex[$v_\a$]{v\a}{4,-\a} 
  }
  \tikzset{label distance=0mm}
  \nvertex[$x$]{x}{2,-2}
  \nvertex[$y$]{y}{2,-3}
  \foreach \a/\b in {1/2,2/3,3/4,4/5} {
    \edge{u\a}{u\b}
    \edge{v\a}{v\b}
  }
  \foreach \a in {1,3} {
    \edge{x}{u\a}
    \edge{x}{v\a}
  }
  \foreach \a in {2,4} {
    \edge{y}{u\a}
    \edge{y}{v\a}
  }
  \edge{u5}{v5}
  \draw[edge] (u1) to [bend right=15] (u5);
  \draw[edge] (v1) to [bend left=15] (v5);
\end{tikzpicture}
\caption{the gadget~$G_0$}
\label{fig:Simon}
\end{figure}

To define the gadget, set $V(G_0) = \{x,y\} \cup \{u_i, v_i : 1 \le i \le 5\}$ and
\begin{align*}
& E(G_0) \, = \, \big\{ u_1u_2, u_2u_3, u_3u_4, u_4u_5, u_5u_1 \big\} \cup \big\{ v_1v_2, v_2v_3, v_3v_4, v_4v_5, v_5v_1 \big\} \\
& \hspace{4cm} \cup \big\{ xu_1, xu_3, xv_1, xv_3 \big\}  \cup \big\{ yu_2, yu_4, yv_2, yv_4 \big\} \cup \big\{ u_5v_5 \big\}.
\end{align*}
Now, let $c \colon V(G_0) \to \{1,2,3\}$ be a proper colouring, and suppose that $c(x) = c(y) = 1$. Then none of the vertices of $\{ u_i, v_i : 1 \le i \le 4 \}$ receives colour 1, and therefore it follows that $c(u_5) = c(v_5) = 1$, a contradiction. 
 
Now, given a graph $H$ with $\chi(H) = 4$, we can construct a graph $H'$ with 
$$\chi(H') = 4, \qquad v(H') = v(H) + 10 \qquad \text{and} \qquad e(H') = e(H) + 18,$$    
simply by replacing any edge of $H$ by a copy of the gadget. Moreover, if $m_2(H) < 2$ then one easily checks 
that $m_2(H') < 2$. Thus, in order to construct the claimed infinite family of graphs, it suffices to construct a single example. 

In order to do so, consider the graph $H$ obtained by replacing every edge
$xy$ of $K_4$ by a copy of the gadget. The resulting graph has $6\cdot
12-4\cdot 2=64$ vertices and $6\cdot 19=114$ edges. Moreover, it may be easily checked that $m_2(H) < 2$, as required. 
\end{proof}

\section{Lower bounds for Theorems~\ref{thm:K3} and~\ref{thm:C5}}
\label{sec:lower}

By Theorem~\ref{thm:pconst} we have that $\delta_\chi(K_3,p)=\frac13$ and
$\delta_\chi(C_5,p)=0$ for any constant $p>0$, and by Proposition~\ref{prop:verysmallp} we have $\delta_\chi(H,p) = 1$ whenever $\tfrac{\log n}{n} \ll p \ll n^{-1/m_2(H)}$. In this section we will
provide three constructions (see Propositions~\ref{prop:cloud:lower},
\ref{prop:thundercloud:lower} and~\ref{prop:C5:simon}) which imply the
remaining lower bounds for $\delta_\chi(K_3,p)$ and $\delta_\chi(C_5,p)$ in
Theorems~\ref{thm:K3} and~\ref{thm:C5}.
However, since these constructions are also needed in~\cite{dense}, we shall, instead of giving them for $H=K_3$ and $H=C_5$, provide them in the following more general setting.

\begin{defn}\label{def:cloudforest}
A graph $H$ is a \emph{cloud-forest graph} if there is an independent set $I \subset V(H)$ (the \emph{cloud}) such that $V(H) \setminus I$ induces a forest $F$, the only edges from $I$ to $F$ go to leaves or isolated vertices of $F$, and no two adjacent leaves in $F$ send edges to $I$. 

Moreover, $H$ is a \emph{thundercloud-forest graph} if there is a cloud $I \subset V(H)$, which witnesses that $H$ is a cloud-forest graph, such that every odd cycle in $H$ uses at least two vertices of~$I$.
\end{defn}

Note that $K_3$ is not a cloud-forest graph, $C_5$ is a cloud-forest but not a thundercloud-forest graph, and $C_{2k+1}$ is a thundercloud-forest graph for every $k \ge 3$. 

The following proposition implies $\delta_\chi(K_3,p)\ge\frac12$ for $n^{-1/2} \ll p(n) = o(1)$.

\begin{prop}\label{prop:cloud:lower}
Let $H$ be a graph with $\chi(H) = 3$, and suppose that $n^{-1/2} \ll p(n) = o(1)$. If $H$ is not a cloud-forest graph, then
$\delta_{\chi}(H,p) \ge \frac{1}{2}$.
\end{prop}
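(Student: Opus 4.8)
The plan is to deduce the proposition from the construction of a single graph: fixing $\gamma>0$ and $s\in\N$, I will show that with high probability $G(n,p)$ contains a spanning $H$-free subgraph $G$ with $\delta(G)\ge(\tfrac12-\gamma)pn$ and $\chi(G)\ge s$; letting $\gamma$ and $s$ vary this gives $\delta_\chi(H,p)\ge\tfrac12$. The graph $G$ will be \emph{almost bipartite}: essentially a complete bipartite graph between two halves $V_1,V_2$ of $[n]$, into which we splice a small copy of a graph $F$ of girth and chromatic number at least $k=k(s,H)$ furnished by Proposition~\ref{prop:rob}. The high chromatic number of $G$ will come for free from $F\subset G$, and the minimum degree from the bipartite part, whose edges are left essentially untouched. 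The entire point of the splicing is to arrange matters so that \emph{every} copy of $H$ inside $G$ is forced into the cloud-forest pattern of Definition~\ref{def:cloudforest}; since $H$ is assumed not to be a cloud-forest graph, no such copy can exist, so $G$ is $H$-free.

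Concretely, first partition $[n]=P\cup Q$ with $|P|=|Q|=n/2$, expose $G(n,p)[P]$, and apply Proposition~\ref{prop:rob} inside $P$ (legitimate since $p\gg n^{-1/2}$) to find $F\subset G(n,p)[P]$ with $v(F)\le\eps/p$, $\girth(F)\ge k$, $\chi(F)\ge k$ and the expansion property, where $k\ge\max\{s,\,v(H)+1\}$ and $\eps$ is a small constant chosen last. Write $W=V(F)$ and expose $G(n,p)[W,Q]$. Next choose \emph{pairwise disjoint} sets $I_w\subset N_{G(n,p)}(w)\cap Q$, one for each $w\in W$, each of size at least $(\tfrac12-\gamma)pn$. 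This is possible by a greedy argument: $\sum_{w\in W}|N_{G(n,p)}(w)\cap Q|\le v(F)\cdot pn\le\eps n\ll|Q|$, the pairwise co-degrees $|N(w)\cap N(w')\cap Q|$ are typically tiny, so processing the $w$ one at a time deletes only a negligible number of candidates from each neighbourhood; if a negligible fraction of $W$ turns out to be exceptional we simply delete those vertices from $F$, retaining $\chi(F)\ge k$ via the expansion property (which forbids independent sets of size $2\eps v(F)$). Now set $V_1=(P\setminus W)\cup\bigcup_{w}I_w$ and $V_2=Q\setminus\bigcup_w I_w$, expose the still-untouched pairs between $V_1$ and $V_2$, and define
$$G\ :=\ F\ \cup\ \big\{wv:\ w\in W,\ v\in I_w\big\}\ \cup\ \big\{v_1v_2\in E(G(n,p)):\ v_1\in V_1,\ v_2\in V_2\big\}.$$

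It is routine, using Chernoff bounds on the fresh bipartite edges together with $|\bigcup_w I_w|\le\eps n$, to check that $\delta(G)\ge(\tfrac12-\gamma)pn$ (each $w\in W$ has its set $I_w$, and each vertex of $V_1\cup V_2$ keeps essentially all of its $G(n,p)$-neighbours across the bipartition) and that $\chi(G)\ge\chi(F)\ge k\ge s$. The crux is $H$-freeness. Suppose $H\subset G$ and set $I:=V(H)\cap V_2$; as $V_2$ is independent in $G$, $I$ is independent in $H$. The only $G$-edges inside $W\cup V_1$ are the edges of $F$ and the disjoint stars $\{wv:v\in I_w\}$, so $H[V(H)\setminus I]$ is a subgraph of $F$ with some pendant vertices attached; since $\girth(F)>v(H)$ this is a forest. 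Furthermore, every $V_1$-vertex has at most one neighbour in $W$ (the $I_w$ are disjoint) and none in $V_1$, so it is a leaf or isolated vertex of $H[V(H)\setminus I]$; every $W$-vertex has no neighbour in $V_2$ at all, so sends no edge to $I$; and every $V_2$-vertex has all its neighbours in $V_1$. Reading these three facts against Definition~\ref{def:cloudforest} shows precisely that $I$ witnesses $H$ as a cloud-forest graph: the edges leaving $I$ all land on $V_1$-vertices, which are leaves or isolated vertices; and if two adjacent leaves both received an edge from $I$, one of the two would have to lie in $W$, which sends no edges to $I$ — a contradiction. Hence $H$, being non-cloud-forest, cannot embed in $G$. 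The main obstacle is the disjointification step — obtaining the $I_w$ simultaneously disjoint and of full size $(\tfrac12-\gamma)pn$ when $p$ is only a little above $n^{-1/2}$ — while the conceptual core is simply the verification that the construction confines every copy of $H$ to the cloud-forest shape.
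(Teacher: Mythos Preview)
Your proof is correct and follows essentially the same construction as the paper: partition into two halves, plant a high-girth high-chromatic $F$ from Proposition~\ref{prop:rob} in one half, attach disjoint neighbourhoods $I_w$ in the other half, set $V_1=(P\setminus W)\cup\bigcup_w I_w$ and $V_2=Q\setminus\bigcup_w I_w$, and take the bipartite edges between $V_1$ and $V_2$; the verification that every small subgraph is cloud-forest (with cloud $V_2$) is identical. The one place you diverge is the disjointification step you yourself flag as the main obstacle: the paper sidesteps the greedy argument entirely by defining $I_u := \{v \in Q : N(v)\cap V(F) = \{u\}\}$, so that the $I_u$ are pairwise disjoint by construction and $|I_u|$ is a binomial random variable with mean $|Q|\cdot p(1-p)^{v(F)-1} \ge (\tfrac12-\eps)pn$, from which a single Chernoff bound finishes.
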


Similarly, the next proposition implies $\delta_\chi(C_5,p)\ge\frac13$ for $n^{-1/2} \ll p(n) = o(1)$.

\begin{prop}\label{prop:thundercloud:lower}
Let $H$ be a graph with $\chi(H) = 3$, and suppose that $n^{-1/2} \ll p(n) = o(1)$. If $H$ is not a thundercloud-forest graph, then
$\delta_{\chi}(H,p) \ge \frac{1}{3}$.
\end{prop}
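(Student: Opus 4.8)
The goal is to show that for every $d=\tfrac13-\gamma<\tfrac13$ and every constant $C$, with high probability $G(n,p)$ has an $H$-free spanning subgraph $G$ with $\delta(G)\ge dpn$ and $\chi(G)>C$; unwinding Definition~\ref{def:deltachip}, this is precisely what $\delta_\chi(H,p)\ge\tfrac13$ asserts. The chromatic number of $G$ will be supplied by a small robust expander, and its minimum degree by a dense near-bipartite backbone; the point is to glue the two together so that no copy of $H$ can arise unless $H$ is a thundercloud-forest graph.

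Concretely, I would first apply Proposition~\ref{prop:rob} with a large integer $k=k(C,v(H))$ and a small $\eps=\eps(\gamma)$ to obtain, whp, a subgraph $F_0\subseteq G(n,p)$ on a set $Z$ with $|Z|\le\eps/p$, with girth and chromatic number both larger than $\max\{k,v(H)\}$, and with the stated bipartite-expansion property. I would then expose $G(n,p)$ in further rounds: split the remaining vertices into $X\cup Y$ with $|X|\approx n/3$, $|Y|\approx 2n/3$, so that the complete bipartite graph between $X$ and $Y$, intersected with $G(n,p)$, already has minimum degree $(\tfrac13-o(1))pn$ and, being bipartite, is automatically $H$-free ($H$ is non-bipartite). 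Finally I would attach $F_0$: roughly, blow each vertex of $F_0$ into a blob inside $X$ and join each blob only to a carefully restricted part of its $G(n,p)$-neighbourhood in $Y$ --- the restriction being exactly what makes the combinatorial roles of ``cloud'' and ``forest'' in Definition~\ref{def:cloudforest} unavoidable for any embedded copy of $H$. Let $G$ be the union of the backbone edges, the blown-up $F_0$-edges, and the attachment edges; I would then perform a cleanup, deleting at most $\gamma pn$ edges per vertex and none inside $F_0$, to kill any sporadic copies of $H$. That a cleanup of this size is enough should follow from a first-moment estimate --- using $p=o(1)$, $|Z|=o(n)$, and the attachment restriction --- bounding the number of copies of $H$ through a fixed vertex by $o(pn)$, plus a standard concentration bound.

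The degree and chromatic bounds are the routine part. Chernoff bounds give that every vertex of $G$ sees $\approx pn/3$ vertices on the relevant side of the backbone, so after the cleanup $\delta(G)\ge(\tfrac13-2\gamma)pn$. For the chromatic number, $G[X]$ is a blow-up of $F_0$ intersected with $G(n,p)$; since $F_0$ has independence number less than $|Z|/k$, this blow-up has independence number at most $\approx n/k$, and intersecting with $G(n,p)$ enlarges independent sets only negligibly (a union bound over candidate independent sets, using $p\gg n^{-1/2}$). Hence $\alpha(G)\le n/(k-1)$ and $\chi(G)\ge k-1>C$.

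The heart of the argument --- and the only place the hypothesis is used --- is $H$-freeness. Suppose $\phi\colon H\hookrightarrow G$. Every edge of $G$ lies inside $X$ (a blown-up $F_0$-edge) or crosses to $Y$, so $Y$ is independent in $G$ and $I:=\phi^{-1}(Y)$ is an independent set of $H$. Then $H-I=\phi^{-1}(X)$ embeds into $G[X]$, a blow-up of $F_0$; as $F_0$ has girth exceeding $v(H)$, this blow-up has no cycle short enough to occur in the image, so $H-I$ is a forest $F$. The attachment restriction is designed so that every edge of $H$ from $I$ to $F$ lands on a leaf or isolated vertex of $F$, and so that no two adjacent leaves of $F$ can both receive edges from $I$; thus $I$ is a legitimate cloud. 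Finally, an injective homomorphism sends any odd cycle of $H$ to a short odd cycle of $G$, which, since $G[X]$ contains no short odd cycle, must leave $X$ and enter $Y$; the restriction on the attachment edges (together with the expansion of $F_0$) rules out such a cycle entering $Y$ only once, as a cycle doing so would project to a short, hence backtracking, closed walk in $F_0$ and thus have even length. So every odd cycle of $H$ uses at least two vertices of $I$, and $I$ witnesses that $H$ is a thundercloud-forest graph, contradicting the hypothesis; hence $G$ is $H$-free. I expect the main obstacle to be exactly the design of the attachment restriction: it must simultaneously force all three clauses of ``cloud'' and the ``thundercloud'' clause, keep copies of $H$ scarce, and still leave each vertex with $(\tfrac13-\gamma)pn$ edges --- balancing ``enough attachment for the degree'' against ``little enough to pin down the structure'' is where the work lies. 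Proposition~\ref{prop:cloud:lower} is proved the same way, with a balanced bipartition of the backbone (giving minimum degree $(\tfrac12-\gamma)pn$) and using only the three cloud-forest clauses.
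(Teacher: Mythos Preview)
Your outline has the right coarse shape --- a small high-girth expander glued to a dense near-bipartite backbone --- but the heart of the argument is deferred to the unspecified ``attachment restriction'', and this is exactly where the proof lives. Two concrete problems arise.

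First, your forest claim fails. You assert that $G[X]$, a blow-up of $F_0$ intersected with $G(n,p)$, ``has no cycle short enough'', so $H-I$ is a forest. But a blow-up of a high-girth graph has many $C_4$'s: if $uv\in E(F_0)$ and the blobs $B_u,B_v$ each have size of order $pn$, then $G(n,p)[B_u,B_v]$ contains $(p^2n)^4\to\infty$ copies of $C_4$. High girth of $F_0$ rules out short \emph{odd} cycles in the blow-up (any odd cycle projects to an odd closed walk, hence an odd cycle, in $F_0$), but not short even ones. So for $H$ with $v(H)\ge 4$, the part $H-I$ landing in $X$ need not be a forest, and $I=\phi^{-1}(Y)$ need not be a cloud.

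Second --- and more fundamentally --- your mechanism for the thundercloud clause does not survive the degree constraint. Your parity argument (``a cycle entering $Y$ once projects to a short closed walk in $F_0$, hence has even length'') requires that the two $X$-neighbours of any $y\in Y$ lie in the \emph{same} blob, i.e.\ that each $y\in Y$ is attached to only one blob. But then $\deg_G(y)\le p|B_u|=O(p^2n)=o(pn)$, destroying the minimum degree. If instead $y$ is attached to several blobs, the projected walk is not closed and your parity argument collapses. This is precisely the tension you flag in your last paragraph, and it is not a matter of tuning parameters: from girth alone one cannot get both the thundercloud clause and minimum degree $(\tfrac13-\gamma)pn$.

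The paper resolves this with an ingredient you are missing entirely: the \L uczak--Thomass\'e construction (Lemma~\ref{lem:acycstruct}), a dense graph $H_{LT}$ with parts $A\cup B\cup C$, minimum degree $(\tfrac13-\gamma)|H_{LT}|$, and the key property that every short odd cycle uses at least two vertices of $B$. In the paper's construction the expander $F$ is \emph{not} blown up; instead one defines a map $\phi\colon V(F)\cup V_1\to A\cup B$ and keeps only those edges $xy$ with $\phi(x)\phi(y)\in E(H_{LT})$. The cloud is $V_1=\phi^{-1}(B)$, and the thundercloud clause is inherited directly from the corresponding property of $H_{LT}$, while the forest clause comes from the high girth of $F$ itself (no blow-up, hence no spurious $C_4$'s). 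The chromatic number is recovered via the expansion of $F$ combined with $\chi(H_{LT}[A])\ge s$. This extra layer of structure is what your ``careful restriction'' would have to encode, and it is not something one can expect to improvise.
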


The constructions that prove these propositions are similar to (though
somewhat more complicated than) that in the proof of
Theorem~\ref{thm:sparsetop} and rely, again, on Proposition~\ref{prop:rob}. 

\begin{proof}[Proof of Proposition~\ref{prop:cloud:lower}]
We will show that, for every $s \in \N$ and $\gamma > 0$, with high probability $G(n,p)$ contains a spanning subgraph $G$ with 
$$\chi(G) \ge s \qquad \text{and} \qquad \delta(G) \ge\bigg( \frac{1}{2} - \gamma \bigg)pn$$ 
such that every $s$-vertex subgraph of $G$ is a cloud-forest graph. This fact implies that $\delta_{\chi}(H,p) \ge 1/2$ for every graph $H$ that is not a cloud-forest graph.

The construction of $G$ is similar to that in the proof of Theorem~\ref{thm:sparsetop}, except that we will need to ensure that the neighbourhoods (in $G$) of the vertices of $F$ are disjoint. To be precise, let us partition the vertex set into sets $X$ and $Y$, with $|X| = |Y| = n/2$, and expose the edges of $G(n,p)$ contained in $X$. With high probability, we obtain (by Proposition~\ref{prop:rob}) a subgraph $F$ on at most $\eps / p$ vertices with girth and chromatic number both at least $s+1$. Next, expose the edges of $G(n,p)$ between the vertices of $F$ and $Y$, and for each $u \in V(F)$, define 
$$I_u \, = \, \Big\{ v \in Y : N(v) \cap V(F) = \{ u \} \Big\},$$
and note that the sets $I_u$ are pairwise disjoint. Observe also that $|I_u|$ is a binomial random variable with expected size $|Y| \cdot p(1-p)^{v(F)-1} \ge \big( 1/2 - \eps \big) pn$, and therefore with high probability $|I_u| \ge (1/2 - \gamma) pn$ for every $u \in V(F)$. Now let~$G$ be the graph with edge set
$$E(F) \cup \big\{ uv : u \in V(F), \, v \in I_u \big\} \cup \big\{ uv \in E(G(n,p)) : u \in V_1, \, v \in V_2 \big\},$$
where 
$$V_1 = \big( X \setminus V(F) \big) \cup \bigcup_{u \in V(F)} I_u \qquad \textup{and} \qquad V_2 := Y \setminus \bigcup_{u \in V(F)} I_u.$$
Observe that $\chi(G) \ge \chi(F) > s$, and that $\delta(G) \ge \big( \frac{1}{2} - \gamma \big) p n$ with high probability.

It remains to show that for any set $W \subset V(G)$ of at most $s$
vertices, $G[W]$ is a cloud-forest graph. To do so, we must find an
independent set $I \subset W$ such that $G[W \setminus I]$ is a forest, and
the only edges from $I$ to $W \setminus I$ go to non-adjacent leaves or
isolated vertices of this forest. We claim that this holds for $I = W \cap
V_2$, which is an independent set by the definition of~$G$. Indeed, observe
first that $G[W\setminus I]$ is a forest, since $W\setminus I\subset
V(F)\cup V_1$, the girth of $F$ is greater than $s$, and since each vertex
of $V_1$ has at most one neighbour in $V(F)\cup V_1$. Now, every edge from $I$ meets $W \setminus I$ in $V_1$, and (as just noted) every vertex of $W \cap V_1$ is either an isolated vertex of $G[W\setminus I]$, or a leaf. Since $V_1$ is an independent set in $G$, all of these leaves are non-adjacent, and hence $G[W]$ is a cloud-forest graph, as required.
\end{proof}

In order to prove Proposition~\ref{prop:thundercloud:lower}, we will make use of the following construction of {\L}uczak and Thomass\'e~\cite{LT}, see also~\cite[Theorem~14]{ChromThresh}.

\begin{lemma}[{\L}uczak and Thomass\'e]\label{lem:acycstruct} 
For each $s \in \N$ and $\gamma > 0$, there exists a graph $H$ with 
$$\delta(H) \ge \bigg( \frac{1}{3} - \gamma \bigg) \cdot |H|,$$ 
and a partition $V(H)=A \dcup B \dcup C$ with the following properties:
\begin{itemize}
\item[$(a)$] $|A| \le \gamma |H|$ and $\chi\big( H[A] \big) \ge s$. 
\item[$(b)$]  $B$ and $C$ are independent sets in $H$. 
\item[$(c)$] $H[A,C]$ is empty and $H[B,C]$ is complete. 
\item[$(d)$] Every odd cycle in $H$ on $s$ or fewer vertices uses at least two vertices of $B$.
\end{itemize}
\end{lemma}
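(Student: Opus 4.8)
The plan is to reconstruct the graph of {\L}uczak and Thomass\'e~\cite{LT} behind~\cite[Theorem~14]{ChromThresh}; here is the shape of the argument. We may assume $s\ge3$, since for $s\le2$ condition~$(d)$ is vacuous and a large balanced complete bipartite graph (with $A$ a single vertex or edge) does everything. The skeleton of $H$ will be a small vertex set $A$ carrying a carefully chosen graph $G_0$, together with two large ``ballast'' sets $B$ and $C$ with $|C|\approx\tfrac13|H|$ and $|B|$ roughly twice $|C|$: all pairs between $B$ and $C$ are edges, there are no edges inside $B$, inside $C$, or between $A$ and $C$, and between $A$ and $B$ we put a further carefully chosen bipartite graph. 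With this skeleton $(b)$ and $(c)$ hold by construction; $|A|\le\gamma|H|$ holds once $|B|,|C|$ are large compared with the (fixed, $s,\gamma$-dependent) order of $G_0$; a vertex of $C$ has degree exactly $|B|\approx\tfrac23|H|$ and a vertex of $B$ degree at least $|C|\approx\tfrac13|H|$, so both exceed $(\tfrac13-\gamma)|H|$. It remains to choose $G_0$ and the $A$--$B$ edges so that $\chi(G_0)\ge s$, every vertex of $A$ has at least $(\tfrac13-\gamma)|H|\approx\tfrac12|B|$ neighbours in $B$, and $(d)$ holds.

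The key input is a graph $G_0$, with vertex set $A$, having $\chi(G_0)\ge s$, odd girth large compared with $s$ and with $1/\gamma$, and fractional chromatic number at most $2+O(\gamma)$; equivalently, a probability distribution $\mathcal D$ on independent sets of $G_0$ covering every vertex with probability at least $\tfrac12-O(\gamma)$. Producing such a graph is the substantive point --- the classical Erd\H{o}s graphs of large girth and large chromatic number are useless here, as their fractional chromatic number is comparable to their chromatic number, so one really needs an ``almost bipartite yet highly chromatic'' graph. Since the odd girth of $G_0$ is large, every ball of radius $O(s)$ in $G_0$ is bipartite, so one may also arrange that each independent set in the support of $\mathcal D$ lies in one colour class of such a ball; call such a set \emph{parity-consistent}, and note that then any two of its vertices at distance $O(s)$ in $G_0$ are at even distance.

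With $G_0$ and $\mathcal D$ in hand I would sample a large number $t$ of (parity-consistent) independent sets $S_1,\dots,S_t$ from $\mathcal D$ with repetition, set $B=\{b_1,\dots,b_t\}$, join $b_j$ to $a\in A$ exactly when $a\in S_j$, and append a set $C$ of the appropriate size made complete to $B$. Then $\chi(H[A])=\chi(G_0)\ge s$; every $a\in A$ has $|\{j:a\in S_j\}|\ge(\tfrac12-O(\gamma))t=(\tfrac12-O(\gamma))|B|\ge(\tfrac13-\gamma)|H|$ neighbours in $B$, which gives $(a)$ and $\delta(H)\ge(\tfrac13-\gamma)|H|$; and since $N_A(b_j)=S_j$ is independent in $G_0$, the graph $H$ is triangle-free. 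For $(d)$, consider an odd cycle $Z$ of $H$ on at most $s$ vertices using at most one vertex of $B$. As $C$ is independent with all its edges into $B$, and $B$ is independent, $Z$ cannot meet $C$; if $Z$ also misses $B$ then $Z$ is an odd cycle of $G_0$ of length $\le s$, contradicting the odd girth. Otherwise $V(Z)\cap B=\{b_j\}$ and $Z=b_j\,a\,P\,a'\,b_j$ with $a,a'\in N_A(b_j)=S_j$ and $P$ an $a$--$a'$ path in $G_0$; since $|Z|\le s$ is odd, $P$ has odd length at most $s-2$, so $a$ and $a'$ are joined in $G_0$ by a short path of odd length --- which, as $S_j$ is parity-consistent and the odd girth of $G_0$ is large, is impossible. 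Hence every odd cycle of $H$ on at most $s$ vertices uses at least two vertices of $B$, which is $(d)$.

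The main obstacle is the key input of the second paragraph: building a graph of large odd girth and large chromatic number whose fractional chromatic number is within $O(\gamma)$ of $2$. Everything else --- fixing $|B|$ and $|C|$, the sampling, and the parity bookkeeping for $(d)$ --- is routine; but it is exactly the near-optimal fractional chromatic number of $G_0$, routed through the roughly two-to-one split of the ballast into $B$ and $C$, that lets the minimum degree reach $(\tfrac13-\gamma)|H|$ while keeping $H$ triangle-free. (This is also why the odd girth must be large compared with $1/\gamma$: a short odd cycle in $G_0$ would by itself force $\chi_f(G_0)\ge2+\Omega(\gamma)$.)
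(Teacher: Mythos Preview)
The paper does not prove this lemma; it is quoted as a construction of {\L}uczak and Thomass\'e (with a pointer also to~\cite[Theorem~14]{ChromThresh}) and used as a black box in the proof of Proposition~\ref{prop:thundercloud:lower}. So there is no in-paper argument to compare against.

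Your reconstruction is faithful to the {\L}uczak--Thomass\'e idea: a small part $A$ carrying a graph $G_0$ of large chromatic number, large odd girth, and fractional chromatic number $2+O(\gamma)$; a part $B$ indexed by a large sample of independent sets of $G_0$, with $b_j$ joined exactly to $S_j\subset A$; and a part $C$ complete to $B$, with $|B|\approx 2|C|$. Your degree and chromatic-number bookkeeping is correct, and you rightly isolate the existence of $G_0$ as the substantive ingredient --- this is precisely what {\L}uczak and Thomass\'e supply (Borsuk-type sphere graphs, with slightly-shrunk hemispherical caps serving as the independent sets, give one concrete realisation).

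One point deserves more care. The ``parity-consistent'' condition on the $S_j$ is genuinely required for~$(d)$: an arbitrary independent set of $G_0$ may well contain two vertices at odd distance~$3$, yielding a $5$-cycle in $H$ through a single vertex of $B$. But your justification --- ``one may also arrange that each independent set in the support of $\mathcal D$ lies in one colour class of such a ball'' --- is too quick: an independent set need not sit inside any single ball, and there is no global $2$-colouring of $G_0$ to appeal to. What is actually needed is that $\mathcal D$ can be supported on independent sets $S$ in which no two vertices are joined by a walk of odd length at most $s$ in $G_0$, while still hitting each vertex with probability $\tfrac12-O(\gamma)$. This holds in the Borsuk model (shrinking each cap slightly away from the equator excludes the boundary vertices, and a short odd walk nearly negates its starting point, so cannot stay in a shrunk cap), and the original construction has this feature built in; but it does not follow from ``large odd girth and $\chi_f\approx 2$'' alone. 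So the outline is sound, but this step should be argued rather than asserted.
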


\begin{proof}[Proof of Proposition~\ref{prop:thundercloud:lower}]
We will show that, for every $s \in \N$ and $\gamma > 0$, with high probability $G(n,p)$ contains a spanning subgraph $G$ with 
$$\chi(G) \ge s \qquad \text{and} \qquad \delta(G) \ge\bigg( \frac{1}{3} - 3\gamma \bigg)pn$$ 
such that every $s$-vertex subgraph of $G$ is a thundercloud-forest graph. The existence of such a graph $G$ implies that the chromatic threshold with respect to $p$ of every graph that is not a thundercloud-forest graph is at least $1/3$, as required.

We choose $\eps>0$ sufficiently small, and partition the vertex set into
sets $X$ and $Y$, with $|X|=2n/3$ and $|Y|=n/3$. As before, we reveal the
edges within $X$ and use Proposition~\ref{prop:rob} to find a subgraph $F$
with chromatic number and girth greater than $s$, on $\eps/p$ vertices,
such that any two disjoint subsets of $V(F)$ of size at least $\eps|V(F)|$
have an edge joining them. As before, we reveal the edges from $V(F)$ to
$Y$ and for each $u\in V(F)$ let $I_u\subset Y$ be the vertices of $Y$
whose unique neighbour in $V(F)$ is $u$. Now set, slightly differently than before,
$$V_1 = X \setminus V(F) \qquad \text{and} \qquad V_2 = Y \setminus \bigcup_{u \in V(F)} I_u.$$ 
Let $H$ be the graph whose existence is guaranteed by
Lemma~\ref{lem:acycstruct}, and $V(H) = A \cup B \cup C$ be the partition
for which properties~$(a)$--$(d)$ hold. Let 
$$\phi \colon V(F) \cup V_1 \to A\cup B$$ 
be a function which maps $v(F) / |A|$ vertices of $F$ to each element of $A$ and $|V_1|/|B|$  vertices of $V_1$ to each element of $B$.\footnote{It is easy to see that we can adjust slightly the sizes of the sets so that all of these are integers. Alternatively, we may allow some elements to receive an extra vertex.}

We are now ready to define $G$. It is the spanning subgraph of $G(n,p)$ with edge set
\begin{multline*}
\Big\{ uv \in E(F) : \phi(u)\phi(v) \in E(H) \Big\} \cup \Big\{ uv : u \in V(F), v \in I_u \Big\} \\
\cup \Big\{ vw \in E(G(n,p)) : v \in I_u \text{ for some } u \in V(F), \, w \in V_1, \, \phi(u)\phi(w) \in E(H) \Big\} \\
\cup \Big\{ uv \in E(G(n,p)) : u \in V_1, v \in V_2 \Big\}\,.
\end{multline*}
We claim that, with high probability,
$$\delta(G) \, \ge \, \bigg( \frac{1}{3} - 3\gamma \bigg) p n.$$
Indeed, similarly as before, by several applications of a Chernoff bound,
with high probability, every vertex $u \in V(F)$ has at least $|I_u|\ge\big( \frac{1}{3} - \gamma
\big)pn$ neighbours,
every vertex of $V_1$ has at least $\big(\tfrac13 - \gamma\big)pn$
 neighbours in $V_2$, 
and every vertex of $V_2$ has at least $\big(\tfrac23 - \gamma\big)pn$
neighbours in $V_1$. Finally, if $u \in V(F)$ then, since $\delta(H) \ge
\big( \frac13 - \gamma\big) v(H)$ and $|A|\le\gamma|H|$ (and in~$H$ all neighbours of $\phi(u)$
are in $A\cup B$), we have 
$$\big| \big\{ w \in V_1 : \phi(u)\phi(w) \in E(H) \big\} \big| \, \ge \, \bigg( \frac13 - 2\gamma\bigg) n,$$
and hence every vertex $v \in I_u$ has at least $\big( \frac13 - 3\gamma
\big)pn$ neighbours $w \in V_1$ such that $\phi(u)\phi(w) \in
E(H)$. 

We next claim that $\chi(G) \ge \chi\big( G[V(F)] \big) \ge s$. Indeed,
suppose that we colour $G[V(F)]$ with fewer than $s$ colours, and consider
the colouring of $H[A]$ in which the vertex $a \in A$ receives a most
common colour in $\phi^{-1}(a)$. This is a colouring of $H[A]$ with fewer
than $s$ colours, so by Lemma~\ref{lem:acycstruct} there is a monochromatic
edge $aa'$ of $H[A]$. Let $c$ be the colour of $a$ and $a'$, and observe
that there exist sets $Z \subset \phi^{-1}(a)$ and $Z' \subset
\phi^{-1}(a')$ of colour $c$, each of size at least $v(F)/(s|A|) > \eps
v(F)$. Thus, since~$F$ is the subgraph obtained from Proposition~\ref{prop:rob}, the graph $F[Z,Z']$ contains an edge, and since $aa' \in E(H)$, this edge is also present in $G$. It follows that our colouring of $G$ is not proper, and so $\chi\big( G[V(F)] \big) \ge s$, as claimed.

Finally, we claim that $G[W]$ is a thundercloud-forest graph for every set $W \subset V(G)$ with $|W| = s$. To prove this, we need to show that there exists a cloud $I \subset W$ which witnesses that $H$ is a cloud-forest graph, and is such that every odd cycle in $G[W]$ uses at least two vertices of $I$. We will show that this is the case for the set
$$I = W \cap V_1.$$
Note first that $I$ is an independent set in $G[W]$, since $V_1$ is an
independent set in $G$. Next, observe that $G[W \setminus I]$ is a forest,
since the girth of $F$ is greater than $s$, and since each vertex of
$W\setminus \big(I\cup V(F)\big)$ is either in $V_2$ and hence isolated in
$G[W\setminus I]$, or is in some $I_u$ so has at most~$u$ as a neighbour in $G[W\setminus I]$. Moreover, in the latter case $u$ is not adjacent to any vertex of $I$, so that neighbours of $I$ are non-adjacent leaves of the forest $G[W\setminus I]$, and thus~$I$ is a cloud.

Finally, we need to check that every odd cycle in $G[W]$ uses at least two
vertices of $I$, so let $S$ be an odd cycle in $G[W]$. Note first that $S
\not\subset V(F) \cup \bigcup_u I_u$ since $F$ has girth greater than $s$ and each vertex of $\bigcup_u I_u$ has only one neighbour in $V(F)$, and that every cycle containing a vertex of $V_2$ uses at least two vertices of $V_1$. 

Thus the only remaining case we need to rule out is that $S$ consists of one vertex $x \in V_1$, whose neighbours in $S$ are the vertices $y,z \in \bigcup_u I_u$, and a path $P$ with an even number of vertices in $F$ from $u$ to $v$, where $y \in I_u$ and $z \in I_v$. Since $xy$ and $xz$ and $P$ are all edges of $G$, it follows that $\phi(u)\phi(x)$, $\phi(v)\phi(x)$ and $\phi(P)$ are all edges of $H$, which gives us a circuit of odd length in $H$. This circuit contains an odd cycle, which must (by Lemma~\ref{lem:acycstruct}) use at least two vertices of $B$, a contradiction (since $\phi^{-1}(B) = V_1$). This proves that $G[W]$ is indeed a thundercloud-forest graph, as required.
\end{proof}

We end the section with a slightly different (and easier) construction, which implies the remaining lower bounds in Theorems~\ref{thm:C5} and~\ref{thm:Clong}.

\begin{prop}\label{prop:C5:simon} 
Let $k \ge 2$, and suppose that $\frac{(\log n)^{2}}{n} \le p(n) \ll n^{-(2k-3)/(2k-2)}$. Then  $$\delta_{\chi}(C_{2k+1},p) \ge \frac{1}{2}.$$
\end{prop}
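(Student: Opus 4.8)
The plan is to build a spanning subgraph $G\subset G(n,p)$ that is $C_{2k+1}$-free, has minimum degree $\big(\tfrac12-\gamma\big)pn$ for any prescribed $\gamma>0$, yet has arbitrarily large chromatic number; by the definition of $\delta_\chi$ this gives $\delta_\chi(C_{2k+1},p)\ge\tfrac12$. The natural source of large chromatic number here is an odd cycle of length \emph{not} equal to $2k+1$: if a graph contains an odd cycle but no $C_{2k+1}$, then iterating "shortening" arguments is blocked, and in fact a graph on few vertices with large chromatic number and large odd girth exists by the Erd\H os-type construction already packaged as Proposition~\ref{prop:rob}. So the first step is to invoke Proposition~\ref{prop:rob} (valid since $n^{-1/2}\ll p$ is \emph{not} guaranteed in our range — see the obstacle below) to obtain inside a random bipartite-type skeleton a small subgraph $F$ with $v(F)\le\eps/p$, $\chi(F)\ge s$, and $\girth(F)\ge s$, in particular odd girth $>2k+1$.

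Next I would bipartition $V(G(n,p))=X\dcup Y$ with $|X|=|Y|=n/2$, expose edges inside $X$, locate $F$ there, then expose edges between $V(F)$ and $Y$ and, exactly as in Propositions~\ref{prop:cloud:lower} and~\ref{prop:thundercloud:lower}, set $I_u=\{v\in Y:N(v)\cap V(F)=\{u\}\}$ so that the $I_u$ are pairwise disjoint, each of size $\ge(\tfrac12-\gamma)pn$ with high probability by Chernoff. Put $V_1=(X\setminus V(F))\cup\bigcup_u I_u$ and $V_2=Y\setminus\bigcup_u I_u$, and let
$$E(G)=E(F)\cup\{uv:u\in V(F),\,v\in I_u\}\cup\{uv\in E(G(n,p)):u\in V_1,\,v\in V_2\}.$$
Then $\chi(G)\ge\chi(F)\ge s$, and $\delta(G)\ge(\tfrac12-\gamma)pn$ with high probability. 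The remaining point is that $G$ is $C_{2k+1}$-free. Any cycle in $G$ is either entirely inside $F$ (length $>2k+1$ since $\girth(F)>2k+1$), or alternates between $V_1$ and $V_2$ (even length), or uses some vertices of $\bigcup_u I_u$; but each $v\in I_u$ has only $u$ as a neighbour in $V(F)\cup\bigcup I_u$ and otherwise only neighbours in $V_2$, and $V_1$ is independent, so any cycle through such a $v$ must cross to $V_2$ and back, forcing even length, or else stay within $F$. Hence $G$ has no odd cycle of length exactly $2k+1$ — indeed no short odd cycle at all — and a fortiori is $C_{2k+1}$-free, completing the construction.

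The main obstacle is the range of $p$: Proposition~\ref{prop:rob} is stated for $n^{-1/2}\ll p=o(1)$, but here $p$ may be as small as $(\log n)^2/n\ll n^{-1/2}$. So I would \emph{not} use Proposition~\ref{prop:rob} directly. Instead, for $p\ll n^{-1/2}$ the key structural fact is that $C_{2k+1}$ itself is unbalanced enough that $G(n,p)$ with high probability has \emph{no} long odd cycles through most vertices once $p\ll n^{-(2k-3)/(2k-2)}$; more precisely, short odd cycles of length up to $2k+1$ are sparse enough (their $2$-density threshold is $n^{-(2k-3)/(2k-2)}$ for $C_{2k-1}$, etc.) that one can delete a single edge from each and destroy all of them while losing $o(pn)$ edges per vertex, exactly as in the proof of Proposition~\ref{prop:verysmallp}. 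The simplest clean route, then, is: first-moment/Kim–Vu bound the number of copies of $C_3,C_5,\dots,C_{2k+1}$ at each vertex (all have $m_2<(2k-2)/(2k-3)$ so their expected count is $o(pn)$ when $p\ll n^{-(2k-3)/(2k-2)}$), delete one edge per copy to make $G(n,p)$ have \emph{odd girth} $>2k+1$ while keeping $\delta\ge(1-o(1))pn$; then run the $F$-plus-bipartition construction above \emph{inside} this cleaned graph, taking $F$ to be any odd cycle (length $>2k+1$) it contains — such a cycle exists since a graph with $\delta\ge(1-o(1))pn$ and $pn\to\infty$ is far from bipartite — and iterate/blow up via the cloud-forest template to boost $\chi$ to $s$. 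Reconciling the two regimes $p\gtrless n^{-1/2}$ into one argument, and checking that in the sparse regime the cleaned graph still contains the high-chromatic gadget one needs, is where the real care goes; the degree and colouring bookkeeping is routine once that is set up.
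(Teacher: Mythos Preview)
Your proposal has two substantial gaps. First, the construction $E(F)\cup\{uv:u\in V(F),\,v\in I_u\}\cup E(G(n,p))[V_1,V_2]$ is \emph{not} $C_{2k+1}$-free. If $u_1u_2\in E(F)$, $v_i\in I_{u_i}$, and $w\in V_2$ is a common neighbour of $v_1$ and $v_2$, then $u_1u_2v_2wv_1$ is a $C_5$; analogous $C_{2k+1}$'s arise for larger $k$ by taking a path of length $2k-3$ in $F$. Your parity argument only shows that an excursion from $V(F)$ out to $V_2$ and back has even length; joined to an odd-length path in $F$ it produces a short odd cycle. This is exactly why the constructions in Propositions~\ref{prop:cloud:lower} and~\ref{prop:thundercloud:lower} yield only cloud-forest (respectively thundercloud-forest) subgraphs, not $C_5$-free ones.

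Second, and more fundamentally, your fallback plan of deleting one edge from each $C_3,\dots,C_{2k+1}$ in $G(n,p)$ cannot work in this range. The expected number of copies of $C_{2k+1}$ through a fixed vertex is $\Theta(n^{2k}p^{2k+1})$, and this is $o(pn)$ only when $p\ll n^{-(2k-1)/(2k)}$. The hypothesis here is merely $p\ll n^{-(2k-3)/(2k-2)}$ (note $(2k-3)/(2k-2)\ge\tfrac12$, so the entire range lies below $n^{-1/2}$ and Proposition~\ref{prop:rob} is never available). In the interesting part of the range, $n^{-(2k-1)/(2k)}\ll p\ll n^{-(2k-3)/(2k-2)}$, a typical vertex lies in $\gg pn$ copies of $C_{2k+1}$, so deleting them all destroys the minimum degree condition. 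The paper's proof avoids this by building $G$ via a sequential exposure process: a high-chromatic, high-girth gadget $F$ with \emph{bounded} maximum degree $O(\omega)$ is found on $\omega/p$ vertices, and edges between $X$ and $Y$ are then revealed one by one in a specific order and accepted only if they do not close a $C_{2k+1}$. The point is that every $(2k+1)$-cycle must use two consecutive vertices of $F$, so the number of length-$2k$ paths from any vertex is at most $O\big(\omega^2(pn)^{2k-2}\big)$, and $p\cdot\omega^2(pn)^{2k-2}\ll pn$ is precisely the condition $p\ll n^{-(2k-3)/(2k-2)}$. Your proposal does not contain this idea, and the closing remark about ``iterate/blow up via the cloud-forest template'' does not supply it.
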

\begin{proof}[Proof of Proposition~\ref{prop:C5:simon}]
Let $\omega=\omega(n)$ be any function tending to infinity sufficiently
slowly as $n \to \infty$ and assume that
\begin{equation}\label{eq:C5p}
\frac{(\log n)^{2}}{n} \, \le \, p = p(n) \, \le \, \frac{1}{\omega^3} \cdot n^{-(2k-3)/(2k-2)}.
\end{equation}
Given $s \in \N$ and $\gamma > 0$, we construct, with high probability, a
$C_{2k+1}$-free spanning subgraph $G \subset G(n,p)$ with $\chi(G) \ge s$ and
$\delta(G) \ge \big( \frac{1}{2} - 2\gamma \big) pn$ as follows.

Partition the vertices into sets $X$ and $Y$ with $|X| = |Y| = n/2$, and expose first the edges of $G(n,p)$ contained in a subset~$X' \subset X$ of size $\omega/p$. We claim that, with high probability, there exists a subgraph $F$ of $G(n,p)$ in $X'$ with maximum degree at most $2\omega$, girth at least $3k$, and chromatic number at least $\log\log\omega$. Indeed, this follows simply by removing vertices of degree greater than $2\omega$, and a vertex from each cycle of length at most~$3k$. To spell out the details, observe that the expected number of independent sets in~$X'$ of size $|X'| / \sqrt{\log\omega}$ tends to zero as $n \to \infty$, the expected number of cycles in~$X'$ of length at most $3k$ is at most $3k\omega^{3k}$, and the expected number of vertices of degree greater than $2\omega$ is (by the Chernoff bound) at most $e^{-\omega/3} |X'|$. Applying Markov's inequality with each of these estimates in turn, we see that with high probability there are no independent sets in $X'$ of size $|X'|/\sqrt{\log\omega}$, the number of cycles of length at most $3k$ is less than $3k\omega^{4k}$, and the number of vertices of degree greater than $2\omega$ is at most $e^{-\omega/6}|X'|$. If each of these events occurs, then, since $\omega$ grows sufficiently slowly, the number of vertices of $X'$ we must remove in order to remove all vertices of degree greater than $2\omega$ and cycles of length at most $3k$ is less than $|X'|/2$ for all large $n$. Now any graph with at least $|X'|/2$ vertices and independence number less than $|X'|/\sqrt{\log\omega}$ has chromatic number at least $\log\log n$, so we obtain the desired $F$.

We now give the pairs of vertices lying between $X$ and $Y$ an order $\tau$, in which the pairs with one end in $V(F)$ come first, but which is otherwise arbitrary. We obtain a spanning subgraph $G$ of $G(n,p)$ by the following process. We start with $E(G_0)=E(F)$, and then for each $1\le i\le |X||Y|$ we define $G_i$ as follows. Let $xy$ be the $i$th edge in $\tau$. If $xy \in E\big( G(n,p) \big)$, and $G_{i-1}\cup \{xy\}$ does not contain a copy of $C_{2k+1}$, and $\deg_{G_{i-1}\cup\{xy\}}\big(y,V(F)\big)\le 2\omega$, then we set $G_i=G_{i-1}\cup\{xy\}$. Otherwise we set $G_i=G_{i-1}$. We let $G=G_{|X||Y|}$.

We claim that $G$ is the desired graph. Indeed, $G_0$ certainly contains no copy of $C_{2k+1}$, and has the desired chromatic number, so by construction the same is true of $G$. It remains to show that with high probability $\delta(G)\ge\big(\tfrac12 - 2\gamma\big)pn$. We will bound the degrees of vertices in $Y$, then in $X \setminus V(F)$, and finally in $V(F)$. Observe first that, by Chernoff's inequality (and since $p\ge\tfrac{(\log n)^2}{n}$), the following events holds with high probability, where $N(x)$ is the neighbourhood in $G(n,p)$ of the vertex $x$: 
\begin{itemize}
\item[$(a)$] $|N(x) \cap Y| \in \big(\tfrac12 \pm \gamma\big)pn$ for each $x \in X$, 
\item[$(b)$] $|N(y) \cap X|  \in \big(\tfrac12 \pm \gamma\big)pn$ for each $y \in Y$, and 
\item[$(c)$] there are at most $e^{-\omega/10}n$ vertices $y\in Y$ such that $|N(y) \cap V(F)| \ge 2\omega$. 
\end{itemize}
Let us assume that each of these likely events holds.

We claim that, for each $y \in Y$, the number of edges $xy$ of $G(n,p)$, with $x\in X$, which are not present in $G$ is stochastically dominated by $\Bin(\omega/p,p) + \Bin\big(8k\omega^2(pn)^{2k-2},p\big)$. Indeed, the number of edges of $G(n,p)$ between $y$ and $V(F)$ is dominated by $\Bin(\omega/p,p)$, and at worst we remove all such edges. The number of paths leaving $y$ of length $2k$ is at most $8k\omega^2(pn)^{2k-2}$, since in constructing such a path we never have more than $pn$ choices for the next vertex, and at some point we must see a sequence of a vertex in $Y$ followed by two vertices in $F$, and we have at most $2\omega$ choices for each vertex of $F$. Not all of these paths need be in any given $G_{i-1}$, but since there are certainly not more, since the $G_j$ form a nested sequence of graphs, and since the event that the $i$th edge of $\tau$ appears in $G(n,p)$ is independent of $G_{i-1}$, the claimed stochastic domination follows. By Chernoff's inequality and the union bound, and by our choice of $p$, it follows that, with high probability, for each $y \in Y$ there are at most 
$$\gamma p n \, \ge \, 2\omega\log n + 16k \omega^2 p(pn)^{2k-2}$$ 
edges of $G(n,p)$ incident to $y$ not present in $G$, as required. 

The proof is almost the same for vertices in $X\setminus V(F)$. Indeed, given $x \in X\setminus V(F)$, it follows exactly as above that the number of edges $xy$ of $G(n,p)$, with $y \in Y$, which are not present in $G$ is stochastically dominated by $\Bin\big(8k\omega^2(pn)^{2k-2},p\big)$. (Note that in this case edges are only removed if they complete a copy of $C_{2k+1}$.) As before, it follows that, with high probability, at most $\gamma p n$ edges of $G(n,p)$ incident to $x$ are not included in $G$ for each $x \in X\setminus V(F)$. 

Finally, for each $x \in V(F)$ we claim that the number of edges $xy$ of $G(n,p)$, with $y \in Y$, which are not present in $G$ is stochastically dominated by $\Bin\big(e^{-\omega/10}n,p\big) + \Bin\big((2\omega pn)^k,p\big)$. The first term here counts edges $xy$ which are in $G(n,p)$ but not added to $G$ because $y$ had too many neighbours in $V(F)$. The set of vertices in $Y$ whose  $G_{i-1}$-neighbourhood in $V(F)$ has size $\lfloor2\omega\rfloor$ is increasing in $i$. Since the probability that a given vertex of $Y$ has at least $\lfloor2\omega\rfloor$ $G(n,p)$-neighbours in $V(F)$ is by Chernoff's inequality at most $e^{-\omega/6}$, by Markov's inequality with high probability the total number of such vertices is at most $e^{-\omega/10}n$. Now since the event that the $i$th edge $xy$ of $\tau$ appears in $G(n,p)$ is independent of $G_{i-1}$, the number of edges $xy$ not included in $G$ because $y$ has too many neighbours in $V(F)$ is stochastically dominated by $\Bin\big(e^{-\omega/10}n,p\big)$. Similarly, assuming that $xy$ is the $i$th edge of $\tau$, the number of vertices $y \in Y$ such that there is a path of length $2k$ from $x$ to $y$ in $G_{i-1}$ is at most $(2\omega)^k(pn)^k$, since (by the definition of $\tau$) every edge of such a path must be incident to $F$. As before, this implies the claimed stochastic domination. By Chernoff's inequality and the union bound, and by our choice of $p$, it follows that, with high probability, for each $x \in V(F)$ there are at most 
$$\gamma p n \, \ge \, e^{-\omega/20} pn + (4\omega)^k p^{k+1} n^k$$ 
edges of $G(n,p)$ incident to $x$ not present in $G$, as required. 
\end{proof}

\section{Upper bounds for Theorems~\ref{thm:K3}, \ref{thm:C5}, and~\ref{thm:Clong}}
\label{sec:upper}

In this section we will determine $\delta_\chi(K_3,p)$ and $\delta_\chi(C_5,p)$ for almost all functions $p = p(n)$, and $\delta_\chi(C_{2k+1},p)$ for a certain range of $p$. Indeed, for $K_3$ the pieces are all already in place. 

\begin{proof}[Proof of Theorem~\ref{thm:K3}]
If $p > 0$ is constant then the result follows from Theorem~\ref{thm:pconst} and the fact that $\delta_\chi(K_3) = 1/3$. Suppose next that $n^{-1/2} \ll p(n) = o(1)$, and recall that, by Theorem~\ref{thm:CGS}, we have $\delta_\chi(H,p) \le 1/2$. On the other hand, recall that~$K_3$ is not a cloud-forest graph, so, by Proposition~\ref{prop:cloud:lower}, we have $\delta_\chi(H,p) = 1/2$ in this range. Finally, if $\tfrac{\log n}{n} \ll p \ll n^{-1/2}$ then it follows from Proposition~\ref{prop:verysmallp} that $\delta_\chi(H,p) = 1$, as required.
\end{proof}

Next, we turn to the case $H = C_5$. Recall that $C_5$ is not a thundercloud-forest graph. We have therefore already proved the following bounds:
\begin{equation*}
\delta_{\chi}(C_5,p) \left\{
\begin{array}{cll}
= 0 & \text{if } & p > 0 \text{ is constant, by Theorem~\ref{thm:pconst}, and since $\delta_\chi(C_5) = 0$,}\smallskip\\ 
\ge \tfrac{1}{3} & \text{if } & n^{-1/2} \ll p \ll 1, \text{ by Proposition~\ref{prop:thundercloud:lower},}\smallskip\\
= \tfrac{1}{2} & \text{if } & n^{-3/4} \ll p \ll n^{-1/2}, \text{ by Theorem~\ref{thm:CGS} and Proposition~\ref{prop:C5:simon},}\smallskip\\
= 1 & \text{if } & \frac{\log n}{n} \ll p \ll n^{-3/4}, \text{ by Proposition~\ref{prop:verysmallp}.}
\end{array} \right.
\end{equation*} 
It therefore only remains to prove that $\delta_{\chi}(C_5,p) \le 1/3$ for all $n^{-1/2} \ll p = o(1)$. We remark that, under the stronger assumption $p = n^{-o(1)}$, this result follows from a general result (for all cloud-forest graphs) proved in~\cite{dense}; the proof of the following result is similar, but significantly simpler.

\begin{prop}\label{prop:C5:thirdupper} 
$\delta_{\chi}(C_5,p) \le 1/3$ for every function $n^{-1/2} \ll p = o(1)$.
\end{prop}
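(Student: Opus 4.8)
The plan is to show that if $G \subset G(n,p)$ is $C_5$-free with $\delta(G) \ge (1/3 + \gamma)pn$ for some fixed $\gamma > 0$, then with high probability $\chi(G)$ is bounded by a constant depending only on $\gamma$. Since $C_5$ is a cloud-forest graph, the heuristic is that such a $G$ should look like a blow-up of a bipartite graph (or close to one), with an extra sparse `cloud' part that can be coloured with a bounded palette after the main structure is fixed. The key local fact about $C_5$-freeness is that the neighbourhood $N(v)$ of any vertex $v$ induces a $C_3$-free \emph{and} $P_4$-free graph (a $P_4$ through $N(v)$ together with $v$ gives a $C_5$, and an edge in $N(v)$ plus a path of length two through $N(v)$ gives a $C_5$); more usefully, if $u,w \in N(v)$ then $u,w$ have no common neighbour other than $v$, and are not joined by a path of length $3$ avoiding $v$. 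I would first extract this structural dichotomy at the level of the sparse random graph: for a typical vertex $v$ in a $C_5$-free $G \subset G(n,p)$, the common neighbourhood of $v$ and most of its neighbours is small.

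The main step is a sparse-regularity / stability argument. First apply the sparse Szemer\'edi Regularity Lemma (the sparse minimum-degree form announced in the preamble) to $G$ at density-scaled level $p$, obtaining a reduced graph $R$ on a bounded number of clusters $V_1,\dots,V_\ell$ with minimum degree at least roughly $(1/3 + \gamma/2)\ell$. Using the Counting Lemma (also announced in the preamble) together with the fact that $p \gg n^{-1/2} = n^{-1/m_2(C_5)}$, any three pairwise-regular-and-dense clusters $V_a, V_b, V_c$ would create a copy of $C_5$ (indeed a $C_5$ only needs the density $n^{-1/m_2(C_5)}$ threshold to appear in regular pairs), so $R$ is triangle-free. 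A triangle-free graph on $\ell$ vertices with minimum degree exceeding $\ell/3$ is, by the Andr\'asfai–Erd\H{o}s–S\'os theorem, bipartite. Hence $R$ is bipartite, say with parts $\mathcal{X}$ and $\mathcal{Y}$. This is the step I expect to be the main obstacle: one has to be careful that the sparse Counting Lemma applies at this density (it does, since $m_2(C_5) = 2$ and $p \gg n^{-1/2}$, but the inheritance-of-regularity and the counting bounds for sparse random graphs need to be invoked in the correct `$p$-density' normalisation, and one must handle the exceptional cluster and the low-density pairs), and that the AES conclusion survives the $o(1)$ slack in the reduced minimum degree.

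Given that $R$ is bipartite, I would finish by a cleaning argument. Let $X = \bigcup_{V_i \in \mathcal{X}} V_i$ and $Y = \bigcup_{V_j \in \mathcal{Y}} V_j$, discarding the $o(n)$ vertices in the exceptional set and in low-density or irregular pairs; call the discarded set $W$. Every edge of $G$ not incident to $W$ goes between $X$ and $Y$ \emph{or} lies inside a pair with density $o(p)$, i.e.\ the graph $G[X]$ and $G[Y]$ are each spanned by $o(pn)$-degree pieces plus the few edges inside sparse pairs — in fact one shows $G[X]$ and $G[Y]$ each have maximum degree $o(pn)$ (any vertex with $\gg$ that many neighbours inside its own side, combined with the $\ge (1/3+\gamma)pn$ degree condition and $C_5$-freeness, produces $C_5$'s with high probability, again via the $p \gg n^{-1/2}$ threshold). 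A graph with maximum degree $d$ is $(d+1)$-colourable, but $d = o(pn)$ is not bounded; so instead I colour $X$ and $Y$ each properly using the structure: since the `internal' edges within $X$ (and $Y$) form a graph that is locally $C_5$-free of bounded ``complexity'' after regularisation, one shows these have bounded chromatic number by iterating the same dichotomy (the cloud-forest structure: internal edges only touch a sparse set, which can then be stripped and $2$-coloured), and finally the small leftover set $W$ — where we have no control — is handled using the minimum degree condition to show $\chi(G[W])$ is also bounded, or is absorbed by noting $|W| = o(n)$ does not by itself bound $\chi$, so instead we must recurse the whole argument on $G[W]$: since $\delta(G[W])$ need not be large, we instead argue $W$ meets every $C_5$-free neighbourhood structure and can be $O(1)$-coloured greedily using that each vertex of $W$ has boundedly many ``types''. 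Combining the colourings of $X$, $Y$, and $W$ with disjoint palettes gives $\chi(G) = O_\gamma(1)$.
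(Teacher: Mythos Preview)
Your outline has the right opening move---apply the sparse minimum-degree regularity lemma and study the reduced graph $R$ with $\delta(R)\ge(1/3+\gamma)k$---but it breaks down precisely where you yourself flag trouble. Even granting that $R$ is bipartite (and note $m_2(C_5)=4/3$, not $2$; this is harmless since $n^{-1/2}\gg n^{-3/4}$, but it is wrong), knowing that almost all edges of $G$ run between $X=\bigcup_{\mathcal X}V_i$ and $Y=\bigcup_{\mathcal Y}V_j$ does \emph{not} bound $\chi(G[X])$, $\chi(G[Y])$, or $\chi(G[V_0])$. Your sentences ``one shows these have bounded chromatic number by iterating the same dichotomy'' and ``each vertex of $W$ has boundedly many `types''' are placeholders, not arguments: $G[X]$ can have maximum degree $o(pn)$ and still have arbitrarily large chromatic number, recursion on $W$ fails because $\delta(G[W])$ is uncontrolled, and the cloud-forest structure is a property of \emph{small} subgraphs of $C_5$-free graphs, not something that directly colours $G[X]$. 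The Andr\'asfai--Erd\H os--S\'os route is natural but does not by itself close the gap in the sparse setting.

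The paper bypasses all of this with a single clean device: the \emph{robust second neighbourhood} $N^*_2(v)=\{w:|N(v)\cap N(w)|\ge dp^2n\}$. One sets $X_i=\{v:|N^*_2(v)\cap V_i|\ge(\tfrac12+d)|V_i|\}$ for each cluster $V_i$ and proves two things. First, each $X_i$ is independent: if $uv\in E(G)$ with $u,v\in X_i$ then $N^*_2(u)\cap N^*_2(v)\cap V_i\neq\emptyset$, and any $w$ in this intersection yields a $C_5$ of the form $uu'wv'v$. Second, $V(G)=\bigcup_i X_i$: if some $u$ lies in no $X_i$, edge-counting from $N(u)$ (using $\delta(G)\ge(1/3+2\gamma)pn$ and the cap $|N^*_2(u)\cap V_i|<(\tfrac12+d)|V_i|$) forces at least $(2/3-O(d))k$ indices $i$ with $|N^*_2(u)\cap V_i|\ge\eps|V_i|$; since $\delta(R)>(1/3+\gamma)k$, two such indices are adjacent in $R$, and regularity then supplies an edge $vw$ with $v,w\in N^*_2(u)$, again giving a $C_5$. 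This colours \emph{all} of $V(G)$---including $V_0$ and the interiors of clusters---with $k$ colours, and neither a counting lemma nor Andr\'asfai--Erd\H os--S\'os is needed.
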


To prove Proposition~\ref{prop:C5:thirdupper} we will use the sparse minimum degree form of Szemer\'edi's Regularity Lemma. First recall the following definitions.

\begin{defn} [$(\eps,p)$-regular pairs and partitions, the reduced graph]
Given a graph $G$ and $\eps, p > 0$, a pair of vertex sets $(A,B)$ is said to be \emph{$(\eps,p)$-regular} if 
$$\bigg| \frac{e\big( G[A,B] \big)}{|A| |B|} - \frac{e\big( G[X,Y] \big)}{|X| |Y|} \bigg| \, < \, \eps p$$ 
for every $X \subset A$ and $Y \subset B$ with $|X| \ge \eps |A|$ and $|Y| \ge \eps |B|$. 

A partition $V(G) = V_0 \cup V_1 \cup \ldots \cup V_k$ is said to be $(\eps,p)$-regular if $|V_0| \le \eps n$, $|V_1| = \ldots = |V_k|$, and at most $\eps k^2$ of the pairs $(V_i,V_j)$ with $1 \le i < j \le k$ are not $(\eps,p)$-regular. 

The \emph{$(\eps,d,p)$-reduced graph} of an $(\eps,p)$-regular partition is the graph $R$ with vertex set  
$V(R) = \{1,\ldots,k\}$ and edge set 
$$E(R) \, = \, \big\{ ij : (V_i,V_j) \text{ is an $(\eps,p)$-regular pair and } e\big( G[V_i,V_j] \big) \ge dp|V_i||V_j|| \big\}.$$ 
\end{defn}

We will use the following form of the Regularity Lemma, see~\cite[Lemma~4.4]{BipartBU} for a proof. Note that although the statement there only guarantees lower-regularity of pairs in the partition, the proof explicitly gives an $(\eps,p)$-regular partition.

\begin{SzRLdeg}
Let $\delta,d,\eps > 0$, $k_0 \in \N$ and $p = p(n) \gg (\log n)^4 / n$. There exists $k_1 = k_1(\delta,\eps,k_0) \in \N$ such that the following holds with high probability. If $G \subset G(n,p)$ has minimum degree $\delta(G) \ge \delta pn$, then there is an $(\eps,d,p)$-regular partition of $G$ into $k$ parts, where $k_0\le k\le k_1$, whose $(\eps,d,p)$-reduced graph $R$ has minimum degree at least $(\delta-d-\eps)k$.
\end{SzRLdeg}

In the proof of Proposition~\ref{prop:C5:thirdupper} we will use the following fact, which is an easy consequence of Chernoff's inequality: if $p \gg n^{-1/2}$, then with high probability there are $\big( 1 + o(1) \big) p|U||V|$ edges between $U$ and $V$ for every pair of sets $(U,V)$ with $|U| = \Omega(pn)$ and $|V| = \Omega(n)$.

\begin{proof}[Proof of Proposition~\ref{prop:C5:thirdupper}]
Let $\gamma > 0$, and let $G$ be a $C_5$-free spanning subgraph of $G(n,p)$ with
$$\delta(G) \ge \bigg( \frac{1}{3} + 2\gamma \bigg) pn.$$
Applying the sparse minimum degree form of Szemer\'edi's regularity lemma to $G$,
with $k_0 = 1 / \gamma$, $d = \gamma /
  10$ and $\eps > 0$ sufficiently small,
 we obtain a partition $V(G) = V_0 \cup V_1 \cup \cdots \cup V_k$, such that the $(\eps,d,p)$-reduced graph $R$ satisfies
\begin{equation}\label{eq:deltaR}
\delta(R) \ge \bigg( \frac{1}{3} + \gamma \bigg) k.
\end{equation}
Given any vertex $v$, we define the \emph{robust second neighbourhood} $N^*_2(v)$ of $v$ in $G$ to be the set of vertices $w$ in $G$ such that $v$ and $w$ have at least $dp^2n$ common neighbours in $G$. We define
\begin{equation}\label{def:Xi:NR2}
X_i := \Big\{ v \in V(G) : \big| N^*_2(v) \cap V_i \big| \ge \big( \tfrac12 + d \big) |V_i| \Big\}
\end{equation}
for each $i \in [k]$, and set $X_0:=V(G)\setminus\big(X_1\cup\dots\cup
X_k\big)$. We will show that $X_i$ is an independent set for each $1 \le i
\le k$, and that $X_0 = \emptyset$. This implies that $\chi(G)$ is bounded
(by a constant depending on~$\gamma$) as required.

Indeed, let us first fix $1 \le i \le k$, and suppose that $uv \in
E(G[X_i])$. Note that, by definition of~$X_i$, the intersection $N^*_2(u)\cap N^*_2(v)$ is non-empty, so let $w \in N^*_2(u) \cap N^*_2(v)$. It follows that 
$$\min\big\{ |N(u)\cap N(w)|, \, |N(v)\cap N(w)| \big\} \, \ge \, dp^2n \, > \, 4,$$
since $p \gg n^{-1/2}$, and therefore there exist distinct vertices $u'$ and $v'$ such that
$$u' \in N(u) \cap N(w) \qquad \text{and} \qquad v' \in N(v) \cap N(w).$$
Since $uv$ is an edge of $G$, it follows that $uu'wv'v$ is a copy of $C_5$ in $G$, which is a contradiction.

To show that $X_0$ is empty, we will use the following claim.

\medskip
\noindent \textbf{Claim:} For each $u \in X_0$, there exists an $(\eps,d,p)$-regular pair $(V_i,V_j)$ such that 
$$|N^*_2(u)\cap V_i| \ge \eps |V_i| \qquad \textup{and} \qquad |N^*_2(u) \cap V_j| \ge \eps |V_j|.$$

\begin{claimproof}[Proof of claim]
The claim follows from some straightforward edge-counting, together with
the minimum degree conditions. Indeed, recalling that $\delta(G) \ge \big(
\frac{1}{3} + 2\gamma \big) pn$, let $U \subset N(u)$ be an arbitrary
subset of size $|U| = pn/3$, and observe that (with high probability) there
are at least $p^2 n^2 / 9$ edges incident to $U$, since $e(G[U]) =
o(p^2n^2)$ (by Chernoff's inequality) and each vertex in $U$ has at least $\big(\frac{1}{3} + 2\gamma \big) pn$ neighbours.

Now, if $|N^*_2(u)\cap V_i| \le \eps |V_i|$, then there are (with high probability) at most 
$$2p  \cdot |U| \cdot \eps|V_i| + dp^2 n \cdot |V_i| \, \le \, 2d p^2 n\cdot |V_i|$$ 
edges between $V_i$ and $U$, so almost all of the edges incident to $U$ go to sets $V_i$ with $|N^*_2(u)\cap V_i| \ge \eps |V_i|$. Moreover, since $u \in X_0$, there are (with high probability) at most 
$$p|U| \cdot \bigg( \frac{1}{2} + 2d \bigg) |V_i|  + dp^2 n \cdot |V_i| \, \le \, \bigg( \frac{1}{6} + 2d \bigg) p^2 n |V_i|$$
edges from $V_i$ to $U$ for every $i \in [k]$. It follows that there must be at least 
$$\bigg( \frac{2}{3} - 9d \bigg) k$$
indices $i \in [k]$ such that $|N^*_2(u)\cap V_i| \ge \eps |V_i|$. Since $\delta(R) \ge \big( \frac{1}{3} + \gamma \big) k$, it follows that there is an edge of $R$ between two such indices, as required.
\end{claimproof}

Now, suppose (for a contradiction) that there exists a vertex $u \in X_0$, and let $(V_i,V_j)$ be the pair given by the claim. By the definition of $(\eps,d,p)$-regularity, there exist vertices $v \in N^*_2(u)\cap V_i$ and $w \in N^*_2(u)\cap V_j$ such that $vw$ is an edge of $G$, and (by the definition of the robust second neighbourhood) there exist distinct vertices $v'$ and $w'$ in the common neighbourhoods of $u$ and $v$, and of $u$ and $w$, respectively. But then $uv'vww'$ forms a copy of $C_5$ in $G$, and so we have the desired contradiction. 
\end{proof}

Finally, let us prove Theorem~\ref{thm:Clong}. It follows from Proposition~\ref{prop:verysmallp} that $\delta_\chi(C_{2k+1},p) = 1$ whenever $\tfrac{\log n}{n} \ll p \ll n^{-(2k-1)/2k}$, and from Theorem~\ref{thm:CGS} and Proposition~\ref{prop:C5:simon} that $\delta_\chi(C_{2k+1},p) = 1/2$ for every function $n^{-(2k-1)/2k} \ll p \ll n^{-(2k-3)/(2k-2)}$. Thus, to deduce the theorem it will suffice to prove the following proposition.

\begin{prop}\label{prop:longcycles}
$\delta_\chi(C_{2\ell+1},p) = 0$ for every $\ell \ge 3$ and $n^{-1/2} \ll p = o(1)$.
\end{prop}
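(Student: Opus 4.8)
The plan is to mimic the proof of Proposition~\ref{prop:C5:thirdupper}, replacing robust second neighbourhoods by robust \emph{$\ell$-th} neighbourhoods (defined via paths of length $\ell$); the reason the hypothesis $\ell\ge3$ is needed is that here the minimum degree may be arbitrarily small, so the step playing the role of ``$X_0=\emptyset$'' becomes considerably harder. Fix $d>0$; it suffices to prove that whp every $C_{2\ell+1}$-free spanning subgraph $G\subset G(n,p)$ with $\delta(G)\ge dpn$ has $\chi(G)\le C(d,\ell)$. We use throughout that whp $\Delta\big(G(n,p)\big)\le(1+o(1))pn$, and that whp $G(n,p)$ has $(1+o(1))p|A||B|$ edges between any $A,B$ with $|A|=\Omega(pn)$ and $|B|=\Omega(n)$. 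Apply the sparse minimum degree form of Szemer\'edi's Regularity Lemma to $G$, with $d_{\mathrm{RL}}$ and $\eps$ small enough in terms of $d$, to obtain an $(\eps,d_{\mathrm{RL}},p)$-regular partition $V(G)=V_0\cup V_1\cup\dots\cup V_k$ whose reduced graph $R$ satisfies $\delta(R)\ge\tfrac{d}{2}k$. For $v\in V(G)$ let
\[
N^*_\ell(v)\,:=\,\big\{\,w\in V(G)\ :\ \text{$G$ has at least $\rho p^{\ell}n^{\ell-1}$ paths of length $\ell$ from $v$ to $w$}\,\big\},
\]
the \emph{robust $\ell$-th neighbourhood} of $v$, where $\rho=\rho(d)>0$ is a suitably small constant.

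Because $p\gg n^{-1/2}$, for any fixed set $T$ of $O_\ell(1)$ vertices the number of $v$--$w$ paths of length $\ell$ that meet $T$ at an internal vertex is at most $O_\ell\big((pn)^{\ell-2}\big)=o\big(\rho p^{\ell}n^{\ell-1}\big)$. Hence, if $vw\in E(G)$ and some vertex $x$ lay in $N^*_\ell(v)\cap N^*_\ell(w)$, we could greedily choose a length-$\ell$ path $P$ from $v$ to $x$ avoiding $w$, and then a length-$\ell$ path $Q$ from $x$ to $w$ avoiding $v$ and the internal vertices of $P$; then $P\cup Q\cup\{vw\}$ would be a copy of $C_{2\ell+1}$ in $G$, a contradiction. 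Therefore
\begin{equation}\label{plan:disj}
N^*_\ell(v)\cap N^*_\ell(w)=\emptyset\qquad\text{for every edge }vw\in E(G).
\end{equation}

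Now put $Y_i:=\big\{v:|N^*_\ell(v)\cap V_i|>(\tfrac12+d_{\mathrm{RL}})|V_i|\big\}$ for $i\in[k]$, and $Y_0:=V(G)\setminus(Y_1\cup\dots\cup Y_k)$. By~\eqref{plan:disj} each $Y_i$ with $i\ge1$ is independent (two adjacent vertices of $Y_i$ would have robust $\ell$-th neighbourhoods meeting inside $V_i$), so it suffices to prove $Y_0=\emptyset$: then colouring each $v$ by an index $i\ge1$ with $v\in Y_i$ yields $\chi(G)\le k\le k_1(d)$. Suppose instead that $u\in Y_0$. The heart of the argument is the claim that
\begin{equation}\label{plan:spread}
\big|\{\,i\in[k]:|N^*_\ell(u)\cap V_i|<\eps|V_i|\,\}\big|\,<\,\tfrac{d}{2}k\,,
\end{equation}
i.e. that $N^*_\ell(u)$ is $\eps$-dense in all but fewer than $\delta(R)$ of the clusters. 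Granting~\eqref{plan:spread}, those clusters span an edge $ij$ of $R$ (as $\delta(R)\ge\tfrac{d}{2}k$); since $(V_i,V_j)$ is an $(\eps,d_{\mathrm{RL}},p)$-regular pair there is an edge $vw\in E(G)$ with $v\in N^*_\ell(u)\cap V_i$ and $w\in N^*_\ell(u)\cap V_j$, and then $u\in N^*_\ell(v)\cap N^*_\ell(w)$, contradicting~\eqref{plan:disj}. Hence $Y_0=\emptyset$, as required.

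The main obstacle is~\eqref{plan:spread}. Its analogue for $\ell=2$ (the Claim in the proof of Proposition~\ref{prop:C5:thirdupper}) is false without the hypothesis $\delta(G)\gtrsim pn/3$ --- indeed $\delta_\chi(C_5,p)=\tfrac13$ in this regime --- so for $\ell\ge3$ one must genuinely exploit that a path of length $\ell$ expands far more than a path of length $2$. The idea is to run a length-$\ell$ exploration from $U:=N_G(u)$ (so $|U|\ge dpn$), carrying the estimate ``the frontier has size $\Omega_d(pn)$'' through the remaining $\ell-1\ge2$ steps via the pseudorandom edge-distribution of $G(n,p)$, while using the cap $|N^*_\ell(u)\cap V_i|\le(\tfrac12+d_{\mathrm{RL}})|V_i|$ (valid since $u\in Y_0$) to prevent any single cluster from absorbing too large a share of the resulting path count; the conclusion is that $N^*_\ell(u)$ must be $\eps$-dense in all but a $\tfrac{d}{2}$-fraction of the clusters. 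Carrying this out rigorously --- in particular keeping the exploration under control when $d$ is tiny, and disposing separately of the degenerate cases in which $G$ is disconnected or essentially bipartite (where $\chi(G)=O(1)$ trivially) --- is where the real work lies.
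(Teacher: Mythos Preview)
Your plan diverges from the paper's in an instructive way, and the divergence is exactly where your gap sits.

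The paper also applies sparse regularity, but it \emph{keeps} the robust second neighbourhood $N^*_2$ (it does not pass to $N^*_\ell$) and, crucially, it \emph{lowers} the cluster threshold: it sets
\[
X_i:=\big\{v:|N^*_2(v)\cap V_i|\ge d|V_i|\big\}
\]
with $d$ small in terms of $\gamma$. This reverses your division of labour. Showing $X_0=\emptyset$ becomes a two-line edge count: if $|N^*_2(v)\cap V_i|<d|V_i|$ for every $i$ then $|N^*_2(v)|\le 2dn$, and then any $U\subset N(v)$ of size $2\gamma pn$ is incident to too few edges to be compatible with $\delta(G)\ge 2\gamma pn$. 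The work moves to showing each $X_i$ independent. Given adjacent $u,v\in X_i$, one picks any $j$ with $ij\in E(R)$ and uses the $(\eps,d,p)$-regularity of $(V_i,V_j)$ to grow an expanding sequence $B_1=N^*_2(u),B_2,\dots,B_{2\ell-3}$ (where $B_t$ is the set of vertices with at least $2\ell$ neighbours in $B_{t-1}$), alternating between $V_i$ and $V_j$; then $B_{2\ell-3}$ covers all but an $\eps$-fraction of $V_i$ and so meets $N^*_2(v)\cap V_i$. A greedy backwards walk produces a path of length $2\ell-4$ from $N^*_2(u)$ to $N^*_2(v)$ avoiding $u,v$, and two common neighbours plus the edge $uv$ close a $C_{2\ell+1}$. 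So the extra length is manufactured by regularity expansion inside a single regular pair, not by iterating the neighbourhood operator.

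Your route makes $Y_i$-independence trivial via~\eqref{plan:disj} and loads everything onto~\eqref{plan:spread}, which you explicitly leave as a sketch. This is a genuine gap, not just unfinished bookkeeping. First, note that~\eqref{plan:disj} already forces $N^*_\ell(u)$ to be independent in $G$, hence $\{i:|N^*_\ell(u)\cap V_i|\ge\eps|V_i|\}$ is an independent set in $R$; since $\delta(R)\ge\tfrac{d}{2}k$ this set has size at most $(1-\tfrac{d}{2})k$, so~\eqref{plan:spread} can \emph{never} hold in a $C_{2\ell+1}$-free $G$. Your proof-by-contradiction therefore collapses to proving~\eqref{plan:spread} from $u\in Y_0$, $\delta(G)\ge dpn$ and pseudorandomness alone, with no appeal to $C_{2\ell+1}$-freeness --- and your outline does not do this. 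The path-counting you suggest cannot control where paths land: a single vertex of $N^*_\ell(u)$ may absorb up to $(2pn)^{\ell-1}$ paths, so an $\eps$-fraction of a cluster can swallow a $\Theta(\eps/p)$-share of all paths aimed at that cluster, which swamps the signal when $p=o(1)$. The ``cap'' $(\tfrac12+d_{\mathrm{RL}})|V_i|$ bounds the number of vertices of $N^*_\ell(u)$ per cluster, not the number of paths they receive, so it does not help. And the ``degenerate cases'' you wave at would need a structural dichotomy you have not stated. The paper's low-threshold trick sidesteps all of this.
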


\begin{proof}
Let $\gamma > 0$, and let $G$ be a $C_{2\ell+1}$-free spanning subgraph of $G(n,p)$ with
$$\delta(G) \ge 2\gamma pn.$$
Applying the sparse minimum degree form of Szemer\'edi's regularity lemma to $G$, we obtain a partition $V(G) = V_0 \cup V_1 \cup \cdots \cup V_k$, such that the reduced graph $R$ satisfies $\delta(R) \ge  \gamma k$. Define
\begin{equation}\label{def:Xi:NR}
X_i := \Big\{ v \in V(G) : \big| N^*_2(v) \cap V_i \big| \ge d |V_i| \Big\}
\end{equation}
for each $i \in [k]$, where $N^*_2(v)$ is as defined in the proof of Proposition~\ref{prop:C5:thirdupper}.
Set $X_0:=V(G)\setminus\big(X_1\cup\dots\cup X_k\big)$. We will again show that $X_i$ is an independent set for each $1 \le i \le k$, and that $X_0 = \emptyset$. 

This time it is relatively easy to see that $X_0 = \emptyset$, since if $v \in X_0$ then $|N^*_2(v)| \le 2dn$, which implies that there are (with high probability) at most 
$$2p \cdot |U| \cdot 2dn + dp^2 n \cdot n \, = \, \big( 8\gamma d + d \big) p^2 n^2 \, < \, \gamma^2 p^2 n^2$$
edges incident to a set $U \subset N(v)$ with $|U| = 2\gamma pn$, contradicting our assumption on the minimum degree of $G$. 

Therefore, let us suppose that $uv \in E(G)$ for some $u,v \in X_i$, and observe that $N_2^*(u) \cap V_i$ and $N_2^*(v) \cap V_i$ both have size at least $d |V_i|$. Let $V_j$ be such that $(V_i,V_j)$ is an $(\eps,d,p)$-regular pair in $G$ (this exists because $\delta(R) > 0$). We claim that  there exists a path of length $2\ell-4$ in $G[V_i,V_j]$ from $N_2^*(u) \cap V_i$ to $N_2^*(v) \cap V_i$ which uses neither $u$ nor $v$. 

To find the desired path, we first construct a sequence of sets $(B_1,\ldots, B_{2\ell-3})$ as follows. Set $B_1 = N_2^*(u)$, and define $B_t$ (for each $2 \le t \le 2\ell - 3$) to be the set of vertices of $G$ with at least $2\ell$ neighbours in $B_{t-1}$. Observe that
$$|B_{2t+1} \cap V_i| \ge (1 - \eps) |V_i| \qquad \textup{and} \qquad |B_{2t} \cap V_j| \ge (1 - \eps) |V_j|$$
for every $1 \le t \le \ell - 2$, since the first time this fails we obtain a contradiction to the definition of an $(\eps,d,p)$-regular pair. Thus 
$$|B_{2\ell-3} \cap N_2^*(v)| \, \ge \, |N_2^*(v) \cap V_i| - \eps |V_i| \, \ge \, (d - \eps) |V_i| \, > \, 2\ell.$$ 
We can now choose the vertices $(w_1,\ldots,w_{2\ell-3})$ of the path greedily, one by one, with $w_t \in B_t$, avoiding all previously-chosen vertices and $u$ and $v$. Finally we choose vertices $u'$ and $v'$ (not so far used) in the common neighbourhoods of $u$ and $w_1$ and $v$ and $w_{2\ell-3}$ respectively, to complete a $(2\ell+1)$-vertex cycle. But we assumed that the graph $G$ is $C_{2\ell+1}$-free, so this contradiction completes the proof of the proposition.
\end{proof}

\section{Open problems}
\label{sec:open}

A large number of interesting questions about $\delta_\chi(H,p)$ remain wide open, and we hope that the work in this paper will inspire further research on the topic. In this section we will mention just a few of (what seem to us) the most natural open problems. We begin with the following (somewhat tentative) conjecture, which says that the hypothesis of Theorem~\ref{thm:classhigh} cannot be weakened to $\chi(H) \ne 3$. 

\begin{conj}\label{conj:chi4}
There exists a graph $H$ with $\chi(H) = 4$ and a function $p = p (n)$ with
$$n^{-1/m_2(H)} \ll p \ll n^{-1/2}$$
such that $\delta_\chi(H,p) < \pi(H)$. 
\end{conj}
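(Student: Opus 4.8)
The plan is to prove the conjecture by exhibiting a single graph $H$ and establishing the \emph{upper} bound $\delta_\chi(H,p) < \pi(H) = \tfrac23$ for $p$ in the window $n^{-1/m_2(H)}\ll p\ll n^{-1/2}$; no lower bound is needed, since $\delta_\chi(H,p)<\tfrac23$ is the entire assertion. For $H$ I would take one of the gadget-replaced graphs of Proposition~\ref{prop:inf42} --- say $H$ obtained from $K_4$ by replacing every edge with the gadget $G_0$ --- which has $\chi(H)=4$ and $m_2(H)<2$, so that the window is non-empty. A preliminary task is to check that the \emph{dense} chromatic threshold already satisfies $\delta_\chi(H)<\tfrac23$: since $H$ is triangle-free (it has girth $4$) and contains no $K_4$, one expects $\delta_\chi(H)\in\{\tfrac12,\tfrac35\}$, one of the two smaller values in~\eqref{eq:CT:thm}, and this should be read off from the structure theorem of~\cite{ChromThresh}. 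If it fails one retains the freedom to replace only some edges of $K_4$, or to gadget-replace a different $4$-critical graph, so as to arrange both $m_2(H)<2$ and $\delta_\chi(H)<\tfrac23$.

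The substance of the argument is to show $\delta_\chi(H,p)\le\delta_\chi(H)+o(1)$ throughout the window, i.e.\ to push the analysis of $\delta_\chi(H,p)$ for $p=n^{-o(1)}$ from~\cite{dense} down to $n^{-1/m_2(H)}\ll p\ll n^{-1/2}$. This is plausible precisely because the only known mechanism forcing the sparse threshold up to $\pi(H)$ --- the expander construction of Theorem~\ref{thm:sparsetop}, built on Proposition~\ref{prop:rob} --- requires $p\gg n^{-1/2}$ and has no analogue in the window. Fix $\eps>0$ with $d:=\delta_\chi(H)+\eps<\tfrac23$ and let $G\subset G(n,p)$ be $H$-free with $\delta(G)\ge dpn$. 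First apply the sparse minimum-degree form of Szemer\'edi's regularity lemma (valid since $p\gg(\log n)^4/n$) to obtain an $(\eps',d',p)$-regular partition whose reduced graph $R$ has $\delta(R)\ge(d-d'-\eps')k>\delta_\chi(H)\,k$. Since $p\gg n^{-1/m_2(H)}$, the sparse embedding lemma for $H$ --- the Kohayakawa--\L uczak--R\"odl conjecture, now available via the container method~\cite{ConGow,SchTuran,BMS,ST} --- applies: whenever $R$ contains an $(\eps',d',p)$-regular subconfiguration $S$ (parts of size $\Theta(n)$, densities $\ge d'p$) admitting a homomorphism $H\to S$, the graph $G$ contains a copy of $H$. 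Hence $H$-freeness of $G$ forces upon $R$ the reduced-graph analogue of $H$-freeness: no such $S$ exists, and in particular $R$ is $K_{\chi(H)}=K_4$-free (take $S=K_4$ and a proper $4$-colouring of $H$).

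With these two ingredients one would run the structural argument of~\cite{ChromThresh,dense} at the level of the pair $(G,R)$, in the spirit of the proofs of Propositions~\ref{prop:C5:thirdupper} and~\ref{prop:longcycles} but with the robust-neighbourhood classes defined through the regularity partition rather than through codegrees. The latter is unavoidable: in the window $p\ll n^{-1/2}$ one has $p^2n\to0$, so two typical vertices share no common neighbour and the literal robust second neighbourhood used in those proofs is meaningless. Instead one uses $\delta(R)>\delta_\chi(H)\,k$ together with the forbidden configurations in $R$ to extract a bounded structural parameter via the \L uczak--Thomass\'e/VC-dimension machinery, and then lifts it to $G$, covering $V(G)$ by $O_\eps(1)$ classes each of which is shown to be independent --- an edge inside a class, together with the regular configuration guaranteed by the robust neighbourhoods into $R$, would embed a copy of $H$ --- while the analogue of ``$X_0=\emptyset$'' follows from the minimum-degree hypothesis and straightforward edge counting, as in those proofs. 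This yields $\chi(G)=O_\eps(1)$, hence $\delta_\chi(H,p)\le d<\tfrac23$.

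I expect the main obstacle to be exactly this last step: carrying out the chromatic-threshold structure argument with only the guarantees of the \emph{sparse} embedding lemma, and in particular near the lower endpoint $p\approx n^{-1/m_2(H)}$, where the container-method error terms are worst and where the argument of~\cite{dense} (designed for the far roomier regime $p=n^{-o(1)}$) does not obviously survive. One would probably want a robust, one-sided version of the lemma --- it suffices to \emph{locate} copies of $H$ in sufficiently dense regular configurations, which is cleaner than full counting --- together with a careful inventory of precisely which reduced configurations force $H$; the latter hinges on the fine combinatorial structure of the chosen gadget graph (its girth $4$, the absence of $K_4$, and the way it is assembled from $K_4$). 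A secondary, more routine difficulty is the usual ``cleaning'' needed so that the sparse pieces $G[V_i]$ and the sub-threshold pairs do not interfere with the colouring; in the present regime this requires the kind of Chernoff and variance estimates developed in Section~\ref{sec:rob}.
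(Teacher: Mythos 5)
The statement you are proving is Conjecture~\ref{conj:chi4}, which the paper explicitly leaves open (the authors call it ``somewhat tentative'' and even remark that they cannot rule out the opposite behaviour for every $4$-chromatic $H$ with $m_2(H)<2$). There is therefore no proof in the paper to compare against; what you have written is a research programme, and it has to be judged as such. Your framing is sound in several respects: only the upper bound $\delta_\chi(H,p)<\tfrac23$ is needed; the window is non-empty precisely for the graphs of Proposition~\ref{prop:inf42}; the obstruction of Theorem~\ref{thm:sparsetop} genuinely disappears for $p\ll n^{-1/2}$ (the second-moment computation of Lemma~\ref{lem:var} requires $m^2/n=\eps^2/(p^2n)\to0$); and the deduction that the reduced graph $R$ is $K_4$-free from $H$-freeness via the one-sided K{\L}R embedding lemma is standard in the range $p\gg n^{-1/m_2(H)}$.

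The genuine gap is the step you yourself flag as ``the main obstacle'', and it is not a technical obstacle but the entire content of the problem. Knowing that $R$ is $K_4$-free and has minimum degree greater than $\delta_\chi(H)k$ does not bound $\chi(G)$: indeed $\delta_\chi(K_4)=\tfrac23=\pi(H)$, so no argument at the level of the reduced graph alone can beat $\tfrac23$. Every known proof that min-degree above $\delta_\chi(H)$ forces bounded chromatic number (\L uczak--Thomass\'e, \cite{ChromThresh}, the propositions of Section~\ref{sec:upper}, and the $p=n^{-o(1)}$ arguments of \cite{dense}) works by showing that an edge inside a suitably defined vertex class, together with local structure around its endpoints (common neighbourhoods, short paths, VC-type set systems of neighbourhoods), forces a copy of $H$. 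In the window $p\ll n^{-1/2}$ all of these local structures degenerate --- as you note, $p^2n\to0$ --- and your replacement, ``robust-neighbourhood classes defined through the regularity partition'', is never defined; nor is it explained how an edge inside such a class, whose endpoints have $G$-neighbourhoods of size only $\Theta(pn)=o(\sqrt n)$ in each part, can be completed to a copy of the $64$-vertex gadget graph using regular pairs (sparse embedding lemmas place vertices into linear-sized sets, not into prescribed neighbourhoods of two fixed vertices, and the relevant inheritance-of-regularity statements are exactly what fail near $p=n^{-1/m_2(H)}$). A secondary unverified point is the premise $\delta_\chi(H)<\tfrac23$ for your chosen $H$, which you defer to the structure theorem of \cite{ChromThresh} without checking it. As written, the proposal identifies the right target and the right obstacles but does not close them, so it does not establish the conjecture.
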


We remark that, as far as we know, this might even be true for \emph{every} graph $H$ with $\chi(H) = 4$ and $m_2(H) < 2$ and every function $p$ in this range. It would therefore also be interesting to give a counter-example to this stronger statement. 

In the case $\chi(H) = 3$, we have barely begun to understand the behaviour of $\delta_\chi(H,p)$. An important next step would be to resolve the problem for all odd cycles.

\begin{prob}
Determine $\delta_\chi(C_{2k+1},p)$ for $k \ge 3$ in the range $n^{-(2k-3)/(2k-2)} \ll p \ll n^{-1/2}$.
\end{prob}

We suspect that $\delta_\chi(C_{2k+1},p) = 0$ for a much wider range of $p$ than those we considered in Theorem~\ref{thm:Clong}; in particular, it seems plausible that by considering robust $t$\textsuperscript{th} neighbourhoods for $2 \le t \le k-1$ (rather than just second neighbourhoods) the proof of Proposition~\ref{prop:longcycles} could be modified to prove this for all $p \gg n^{-(k-2)/(k-1)}$. Nevertheless, a significant gap remains, and we are not sure what to expect in the range $n^{-(2k-3)/(2k-2)} \ll p \ll n^{-(k-2)/(k-1)}$. 

For more general 3-chromatic graphs, it follows from Theorem~\ref{thm:CGS} and Proposition~\ref{prop:cloud:lower} that if $H$ is not a cloud-forest graph and $p = p(n)$ satisfies
\begin{equation}\label{eq:boundsonpforclouds}
\max\big\{ n^{-1/m_2(H)}, n^{-1/2} \big\} \, \ll \, p \, \ll \, 1,
\end{equation}
then $\delta_{\chi}(H,p) = 1/2$. For cloud-forest graphs that are not thundercloud-forest we can at present only determine $\delta_{\chi}(H,p)$ in a smaller range: it follows from Proposition~\ref{prop:thundercloud:lower} and~\cite[Proposition~4.1]{dense} that $\delta_{\chi}(H,p) = 1/3$ if $p = o(1)$ and $p = n^{-o(1)}$. In terms of lower bounds, we have $\delta_\chi(H,p)\ge 1/3$ in the range~\eqref{eq:boundsonpforclouds} by Proposition~\ref{prop:thundercloud:lower}, and $\delta_\chi(H,p)=1$ when $p\ll n^{-1/m_2(H)}$ by Proposition~\ref{prop:verysmallp}; if $m_2(H)>2$ then these ranges overlap. In this case it is possible that these lower bounds are correct, though we cannot prove it. When $m_2(H)\le 2$ we do not know what to expect in the gap between the two ranges.

\begin{qu}
Let $H$ be a cloud-forest graph, and suppose that $p = p(n)$ satisfies~\eqref{eq:boundsonpforclouds}. 
Is $\delta_\chi(H,p) = 1/3$ whenever $H$ is a not a thundercloud-forest graph?
\end{qu}

Recall that for thundercloud-forest graphs, we do not even know how to determine $\delta_\chi(H,p)$ in the range $p = n^{-o(1)}$, see~\cite[Conjecture~1.6]{dense}.

Finally, we do not know how to determine the behaviour of $\delta_\chi(H,p)$ around the threshold $p = n^{-1/m_2(H)}$.

\begin{prob}
Determine $\delta_\chi(H,p)$ for $p = c n^{-1/m_2(H)}$. 
\end{prob}

The construction used to prove Proposition~\ref{prop:verysmallp} (that is, randomly remove one edge from each copy of $H$ in $G(n,p)$) can easily be extended to give a lower bound in the range $(\pi(H),1)$ for all sufficiently small $c > 0$. It would be interesting to understand how the extremal graph transitions (as $c$ increases) from a random-like graph into a Tur\'an-like graph.

%%%% BIBLIOGRAPHY %%%%%%%%%%%%%%%%%%%%%%%%%%%%%%%%%%%%%%%%%%%%%%%%%%%%%

\bibliographystyle{amsplain_yk} 
\bibliography{ChromaticThreshold}

\end{document}